\numberwithin{equation}{section}
\newtheorem{theorem}{Theorem}[section]
\newtheorem{lemma}[theorem]{Lemma}
\newtheorem{definition}[theorem]{Definition}
\newtheorem{corollary}[theorem]{Corollary}
\newtheorem{proposition}[theorem]{Proposition}
\newtheorem{remark}[theorem]{Remark}
\title[The nonlocal diffuse optical tomography problem]{Approximation and uniqueness results for the nonlocal diffuse optical tomography problem}
\author[Y.-H. Lin]{Yi-Hsuan Lin}
\address{Department of Applied Mathematics, National Yang Ming Chiao Tung University, Hsinchu, Taiwan}
\email{yihsuanlin3@gmail.com}
\author[P. Zimmermann]{Philipp Zimmermann}
\address{Departament de Matem\`atiques i Inform\`atica, Universitat de Barcelona, Barcelona, Spain}
\email{philipp.zimmermann@ub.edu}
\newcommand{\R}{{\mathbb R}}
\newcommand{\N}{{\mathbb N}}
\newcommand{\eps}{\epsilon}
\newcommand {\p} {\partial}
\newcommand{\LC}{\left(}
\newcommand{\RC}{\right)}
\newcommand{\wt}{\widetilde}
\newcommand{\schwartz}{\mathscr{S}}
\newcommand{\tempered}{\mathscr{S}^{\prime}}
\newcommand{\fourier}{\mathcal{F}}
\newcommand{\ifourier}{\mathcal{F}^{-1}}
\newcommand{\test}{\mathscr{D}}
\newcommand{\distr}{\mathscr{D}^{\prime}}
\newcommand{\abs}[1]{\left\lvert #1 \right\rvert}
\DeclareMathOperator{\Div}{div} 
\DeclareMathOperator{\supp}{supp} 
\begin{document}

	\maketitle
	\begin{abstract}
		We investigate the inverse problem of recovering the diffusion and absorption coefficients $(\sigma,q)$ in the nonlocal diffuse optical tomography equation 
        \begin{equation}
        \label{eq: eq abstract}
            (-\Div( \sigma \nabla))^s u+q u =0 \text{ in }\Omega
        \end{equation}
        from the nonlocal Dirichlet-to-Neumann (DN) map $\Lambda^s_{\sigma,q}$. The purpose of this article is to establish the following approximation and uniqueness results.
		\begin{enumerate}[(i)]
			\item\label{item 1 abstract}  \textit{Approximation:} We show that solutions to the conductivity equation
         \begin{equation}
         \label{eq: cond eq abstract}
          \Div( \sigma \nabla v)=0 \text{ in }\Omega
         \end{equation}
         can be approximated in $H^1(\Omega)$ by solutions to \eqref{eq: eq abstract} and the DN map $\Lambda_\sigma$ related to \eqref{eq: cond eq abstract} can be approximated by the nonlocal DN map $\Lambda_{\sigma,q}^s$.
			\item\label{item 2 abstract}  \textit{Local uniqueness:} We prove that the absorption coefficient $q$ can be determined in a neighborhood $\mathcal{N}$ of the boundary $\partial\Omega$ provided $\sigma$ is already known in $\mathcal{N}$.
            \item\label{item 3 abstract} \textit{Global uniqueness:} Under the same assumptions as in \ref{item 2 abstract}, and if one of the potentials vanishes in $\Omega$, then one can turn with the help of \ref{item 1 abstract} the local determination into a global uniqueness result.
		\end{enumerate}
        It is worth mentioning that the approximation result \ref{item 1 abstract} relies on the Caffarelli--Silvestre type extension technique and the geometric form of the Hahn--Banach theorem.
		
		\medskip
		
		\noindent{\bf Keywords.} Fractional Laplacian, nonlocal diffuse optical tomography, Hahn-Banach theorem, Runge approximation, simultaneous determination
		
		\noindent{\bf Mathematics Subject Classification (2020)}: 35R30, 26A33, 35J10, 35J70

	\end{abstract}

	\tableofcontents

	\section{Introduction}\label{sec: introduction}
	
	In recent years, the study of nonlocal inverse problems attracted interest by many researchers. The first work in this field \cite{GSU20} concerned the unique determination of bounded potentials in the \emph{fractional Schr\"odinger equation}
	\begin{equation}\label{eq: frac schroedinger}
		((-\Delta)^s+q)u=0\text{ in }\Omega
	\end{equation}
	from the related (partial) Dirichlet-to-Neumann (DN) map. Here $\Omega\subset\R^n$ is a bounded domain and $0<s<1$. The proof of this uniqueness result relied on two essential ingredients, namely the \emph{unique continuation property} (UCP) of the fractional Laplacian $(-\Delta)^s$ and a \emph{Runge approximation}, which are special cases of the Propositions \ref{prop: UCP} and \ref{prop:(Runge-approximation-property)} and the Runge approximation allows to approximate any function in $L^2(\Omega)$ by solutions to \eqref{eq: frac schroedinger}. 

	By generalizations of these two beautiful and remarkable results, different articles could solve several inverse problems that are open in the corresponding local case or there even exist counterexamples to uniqueness. For example, one can uniquely determine in the nonlocal setting singular potentials or (linear) lower order local perturbations and furthermore the above approach has been extended to the case of higher order nonlocal operators or to detecting nonlinearities instead of potentials by combining it with other techniques as linearization and monotonicity methods (see \cite{bhattacharyya2021inverse,CLL2017simultaneously,CMR20,CMRU20,GLX,CLL2017simultaneously,cekic2020calderon,feizmohammadi2021fractional,harrach2017nonlocal-monotonicity,harrach2020monotonicity,GRSU18,GU2021calder,harrach2017nonlocal-monotonicity,harrach2020monotonicity,KRZ-2023,lin2020monotonicity,LL2020inverse,LL2022inverse,LLR2019calder,LLU2022para,KLW2021calder,RS17,ruland2018exponential,RZ2022unboundedFracCald,GU2021calder}). 
	
	Let us emphasize that the recovery of leading order coefficients for nonlocal operators has also been studied. These can be seen as the nonlocal counterparts to classical Calder\'on problem \cite{calderon} or the $p\,$-Calder\'on problem \cite{Salo:Zhong:2012}. In the works \cite{choulli2023fractional,CGRU2023reduction,feizmohammadi2021fractional_closed,feizmohammadi2021fractional,GU2021calder,ruland2023revisiting,LLU2022para,LLU2023calder,lin2023determining,RZ2022FracCondCounter,CRZ2022global,CRTZ-2022,RZ-low-2022,Frac-p-Lap-KLZ,LRZ2022calder,FKU24}, the authors investigated these types of inverse problems by utilizing either the DN map or the source-to-solution map as measurement operators.

	The simultaneous recovery of several coefficients in nonlocal partial differential equations (PDEs) from the related DN map is usually more involved, but there are still a few positive results into this direction. For example, the following results have been obtained:
	\begin{enumerate}[(i)]
		\item\label{item drift} In \cite{cekic2020calderon} the authors showed that the drift term $b$ and the potential $q$ in 
		\[
		( (-\Delta)^s+b\cdot\nabla +q) u=0\text{ in }\Omega
		\]
		can be recovered uniquely form the DN map.
		\item In \cite{LZ2023unique} a unique determination result has been obtained for all coefficients $(\rho,q)$ and kernel $K$ in the nonlocal porous medium equation
		\[
		\rho\partial_t u+L_K( u^m)+qu=0\text{ in }\Omega\times (0,T),
		\]
		where $m>1$ and $L_K$ is an \emph{elliptic integro-differential operator} of order $2s$, that is $L_K$ is given by 
		\begin{equation}
			\label{eq: integro-differential operator}
			L_Ku(x)=\text{p.v.}\int_{\R^n}K(x,y)(u(x)-u(y))\,dy
		\end{equation}
		with 
		\begin{equation}
			\label{eq: uniform ellipticity condition}
			K(x,y)=K(y,x)\quad\text{and}\quad \frac{\lambda}{|x-y|^{n+2s}}\leq K(x,y)\leq \frac{\lambda^{-1}}{|x-y|^{n+2s}}
		\end{equation}
		for some $\lambda>0$.
		\item\label{item nonlocal opt tomography} In \cite{zimmermann2023inverse} it is shown that the diffusion and absorption coefficients $(\gamma,q)$ in the \emph{nonlocal optical tomography equation}
		\begin{equation}
			\label{eq: nonlocal opt tom eq}
			L_{\gamma}^su+qu=0\text{ in }\Omega
		\end{equation}
		can be uniquely recovered from the DN map. Here, $L_{\gamma}^s$ is the integro-differential operator with kernel
		\begin{equation}
			\label{eq: kernel fractional cond op}
			K(x,y)=C_{n,s}\frac{\gamma^{1/2}(x)\gamma^{1/2}(y)}{|x-y|^{n+2s}}
		\end{equation}
		(cf.~\eqref{eq: integro-differential operator}),
		where $C_{n,s}>0$ is the usual normalization constant in the definition of the fractional Laplacian $(-\Delta)^s$ and $\gamma\colon \R^n\to\R$ is a uniformly elliptic function. It is noteworthy that in the endpoint case $s=1$ the operator $L_{\gamma}^s$ becomes the usual conductivity operator $-\Div(\gamma\nabla\cdot)$.
	\end{enumerate}
	Analogous uniqueness statements in the endpoint cases $s=1$ of \ref{item drift} and \ref{item nonlocal opt tomography} are generally not true. In this article, we consider a similar PDE as the nonlocal optical tomography equation \eqref{eq: nonlocal opt tom eq}, but where the operator $L_{\gamma}^s$ is replaced by a fractional power of the conductivity operator $-\Div(\sigma\nabla\cdot)$. For another interesting simultaneous determination result, we refer to the recent article \cite{FKU24}, where the geometric information and the potential on closed Riemannian manifolds are determined by using the local source-to-solution map.

	\subsection{Diffuse optical tomography problems and main results}
	
	In this section, we describe the model considered in this article, discuss its local counterpart and present our main results.
	
	\subsubsection{The nonlocal diffuse optical tomography problem}
	
	Let $\Omega \subset \R^n$ be a bounded domain with smooth boundary $\p \Omega$. Throughout this work, we suppose that the \emph{diffusion coefficient} $\sigma \in C^\infty(\R^n)$ is uniformly elliptic, that is
	\begin{equation}\label{ellipticity}
		\lambda \leq  \sigma(x)\leq \lambda^{-1}
	\end{equation}
	for some $\lambda\in (0,1)$, satisfies 
	\begin{equation}
		\label{eq: unity cond}
		\sigma(x)=1\text{ for }x\in \Omega_e,
	\end{equation}
	and the \emph{absorption coefficient} or \emph{potential} $q\in  L^\infty(\Omega)$ is nonnegative. Given such data, we consider the Dirichlet problem for the \emph{nonlocal diffuse optical tomography equation}
	\begin{equation}\label{equ: main}
		\begin{cases}
			(-\Div (\sigma \nabla))^s u + q u =0 &\text{ in }\Omega, \\
			u=f  & \text{ in }\Omega_e, 
		\end{cases}
	\end{equation}
	where 
	$$
	\Omega_e:=\R^n\setminus \overline{\Omega}
	$$ 
	denotes the exterior of $\Omega$ and $0<s<1$. Here $(-\Div (\sigma\nabla))^s$ is an elliptic integro-differential operator, which is rigorously defined in Section \ref{sec: preliminaries}. By standard methods, one sees that the Dirichlet problem \eqref{equ: main} is well-posed in the energy space $H^s(\R^n)$. 
	Let $W\subset \Omega_e$ be a nonempty bounded Lipschitz domain, then we can define the (partial) DN map 
	\begin{equation}
		\label{eq: DN map intro}
		\Lambda_{\sigma,q}^s: \wt H^s(W) \to H^{-s}(W), \quad f\mapsto \left. (-\Div( \sigma\nabla))^s u_f \right|_{W} ,
	\end{equation}
	where $u_f \in H^s(\R^n)$ is the unique solution to \eqref{equ: main}. With these results at hand, one can formulate the following inverse problem:
	

	\begin{enumerate}[\textbf{(IP)}]
		\item\label{Q:IP} \textbf{Inverse Problem.}  Can one uniquely determine the diffusion and absorption coefficients $(\sigma,q)$ in $\Omega$ from the partial DN map $\Lambda^s_{\sigma,q}$ given by  \eqref{eq: DN map intro}?
	\end{enumerate}
	
	\subsubsection{The classical diffuse optical tomography problem}
	
	Before presenting our main results, let us review the situation for the local counterpart of our model, which corresponds formally to the limiting case $s\to 1$. As explained below, this local inverse problem cannot be solved uniquely, this means $\sigma$ and $q$ are generally not uniquely determined by the related DN map $\Lambda_{\sigma,q}$. To see this, let us consider the \emph{diffuse optical tomography equation}
	\begin{equation}\label{equ: local opti}
		\begin{cases}
			-\Div  (\sigma \nabla w) + q w =0 &\text{ in }\Omega, \\
			w=g&\text{ on }\p \Omega,
		\end{cases}
	\end{equation}
	where $(\sigma,q)$ are again the diffusion and absorption coefficients. As in the nonlocal case, when $q\geq 0$ in $\Omega$, the well-posedness of \eqref{equ: local opti} guarantees the existence of the (full) DN map, which can be characterized by 
	\begin{equation}
		\Lambda_{\sigma,q}:H^{1/2}(\p \Omega)\to H^{-1/2}(\p \Omega), \quad g\mapsto  \left.\sigma \nabla w_g\cdot \nu \right|_{\p \Omega},
	\end{equation}
	where $w_g\in H^1(\Omega)$ is the unique solution to \eqref{equ: local opti}. Also here the final goal is to recover the coefficients $(\sigma,q)$ from the DN data $\Lambda_{\sigma,q}$ and we refer to this question as the \emph{diffuse optical tomography problem}. Thus, \ref{Q:IP} can be seen as the nonlocal analogon of this problem. 
	
	Now, one may observe that under suitable smoothness assumptions on $\sigma$ the classical \emph{Liouville transformation} $w\mapsto \sqrt{\sigma} w$, which is of fundamental importance in solving the classical Calder\'on problem \cite{sylvester1987global},
	maps every solution $w$ of the optical tomography equation
	\begin{equation}\label{R-equation}
		-\Div( \sigma \nabla w) + q w =0 \text{ in }\Omega
	\end{equation} 
	to a solution $\wt w=\sqrt{\sigma} w$ of the Schr\"odinger equation 
	\begin{equation}\label{S-equation}
		-\Delta \wt w + V \wt w=0 \text{ in }\Omega \quad  \text{with}\quad V=\frac{\Delta \sqrt{\sigma}}{\sqrt{\sigma}}+\frac{q}{\sigma}.
	\end{equation}
	The well-known counterexamples of Arridge and Lionheart \cite{AL1998nonuniqueness}, showing that this local inverse problem is not uniquely solvable, are indeed based on the above Liouville reduction. In fact, if $(\sigma_1,q_1)$ with $\sigma_1$ uniformly elliptic, $q_1\geq 0$ is given, then $(\sigma_2,q_2)$ with $\sigma_2=\sigma_0+\sigma_1$, $q_2=q_0+q_1$ has the same DN data, whenever $(\sigma_0,q_0)$ satisfies
	\begin{enumerate}[(i)]
		\item $\sigma_0\geq 0$ and $\sigma_0=0$ in a neighborhood of $\partial\Omega$,
		\item and the perturbation $q_0$ is given by
		\begin{equation}
			\label{eq: new potential}
			q_0=\sigma_2\left(\frac{\Delta \sqrt{\sigma_1}}{\sqrt{\sigma_1}}-\frac{\Delta\sqrt{\sigma_2}}{\sqrt{\sigma_2}}+\frac{q_1}{\sigma_1}\right)-q_1
		\end{equation}
	\end{enumerate}
	(see also \cite{zimmermann2023inverse}). So, even in the case $\sigma=1$ in a neighborhood of $\p \Omega$, one may lose uniqueness, and therefore, under these general conditions on $(\sigma,q)$ it seems impossible to uniquely determine $\sigma$ and $q$ simultaneously. For some positive results on the simultaneous recovery we refer the reader to \cite{harrach2009uniqueness,harrach2012simultaneous}. In these works it is shown that if $\sigma$ is piecewise constant and $q$ piecewise analytic, then uniqueness hold by applying the monotonicity method.
	
	Finally, let us mention that the above problem arises in steady state diffusion optical tomography, where light propagation is characterized by a diffusion approximation and the excitation frequency is set to zero. For a complete description of optical tomography including the derivation of \eqref{equ: local opti}, we refer the reader to the articles \cite{arridge1999optical} and  \cite{AL1998nonuniqueness}.

	\subsubsection{Approximation and uniqueness results for the nonlocal diffuse optical tomography problem}
	
	Our first main result is an approximation result for the DN map related to the classical conductivity equation, which reads.
	
	\begin{theorem}[Approximation]\label{thm: uniqueness of potential}
		Let $\Omega, W\subset \R^n$ be bounded domains with smooth boundaries such that $\overline{\Omega} \cap \overline{W}=\emptyset$. Suppose that the diffusion coefficient $\sigma\in C^\infty (\R^n)$ satisfies \eqref{ellipticity}--\eqref{eq: unity cond} and the absorption coefficient $ q\in L^{\infty}(\Omega)$ is nonnegative with compact support. Let $\Lambda^s_{\sigma,q}$ denote the DN map of
		\begin{equation}\label{equ: main j=12}
			\begin{cases}
				(-\Div (\sigma \nabla) )^s u + q u =0 &\text{ in }\Omega, \\
				u=f  & \text{ in }\Omega_e.
			\end{cases}
		\end{equation}
		For all $g\in H^{1/2}(\partial\Omega)$, there exists $( g_k)_{k\in\N}\subset C_c^{\infty}(W)$ such that 
		\begin{equation}
			\label{eq: approx bdry value}
			g=\lim_{k\to\infty}U_{g_k}|_{\partial\Omega}\text{ in }H^{1/2}(\partial\Omega)
		\end{equation}
		and
		\begin{equation}
			\label{eq: approximation prop DN}
			\Lambda_{\sigma}g=\lim_{k\to\infty}\partial_\nu U_{g_k}|_{\partial\Omega}\text{ in }H^{-1/2}(\partial\Omega).
		\end{equation}
		
		Here, $\Lambda_{\sigma}\colon H^{1/2}(\partial\Omega)\to H^{-1/2}(\partial\Omega)$ denotes the DN map related to the conductivity operator $\Div(\sigma\nabla\cdot)$ in $\Omega$ and for any $f\in C_c^{\infty}(W)$ the function $U_f$ is given by
		\[
		U_f(x)=\int_0^\infty y^{1-2s}\mathcal{P}^s_{\sigma}u_f (x,y) \, dy,
		\]
		where $u_f\in H^s(\R^n)$ is the unique solution to \eqref{equ: main j=12} and $\mathcal{P}^s_{\sigma}$ denotes the Caffarelli--Silvestre extension type operator with coefficient $\sigma$ (see Section~\ref{sec: extension problem}).
	\end{theorem}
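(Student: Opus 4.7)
I would attack the theorem by a Hahn--Banach duality argument built around the Caffarelli--Silvestre extension of the nonlocal operator and the fractional Runge approximation. The first task is to identify the PDE satisfied by $U_f$. Because the extension $w = \mathcal{P}^s_\sigma u_f$ obeys the degenerate elliptic equation $\p_y(y^{1-2s}\p_y w) + y^{1-2s}\Div_x(\sigma \nabla_x w)=0$ on $\R^n\times(0,\infty)$, integrating in $y$ over $(0,\infty)$ and using the Caffarelli--Silvestre characterization of $(-\Div(\sigma\nabla))^s$ as the weighted conormal limit at $y=0$ produces
\[
-\Div(\sigma\nabla U_f) \;=\; c_s^{-1}(-\Div(\sigma\nabla))^s u_f \quad \text{in } \R^n,
\]
which in $\Omega$ simplifies to $-\Div(\sigma\nabla U_f) = -c_s^{-1} q u_f$ in view of \eqref{equ: main j=12}. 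Thus $U_f$ is a ``near-solution'' of the conductivity equation with $L^2$ source proportional to $q u_f$.

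The Cauchy data space $\mathcal{C}_\sigma:=\{(h,\Lambda_\sigma h) : h\in H^{1/2}(\p\Omega)\}\subset H^{1/2}(\p\Omega)\times H^{-1/2}(\p\Omega)$ is a closed subspace, and by self-adjointness of $\Lambda_\sigma$ its annihilator is $\{(\Lambda_\sigma b,-b):b\in H^{1/2}(\p\Omega)\}$. The geometric form of Hahn--Banach therefore reduces the approximation statement to proving: every $(a,b)\in H^{-1/2}(\p\Omega)\times H^{1/2}(\p\Omega)$ satisfying
\begin{equation}\label{eq:sketch-annihilator}
\langle U_f|_{\p\Omega},a\rangle + \langle \p_\nu U_f|_{\p\Omega},b\rangle = 0 \qquad \forall f\in C_c^\infty(W)
\end{equation}
must verify $\alpha:=a+\Lambda_\sigma b=0$ in $H^{-1/2}(\p\Omega)$.

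To exploit \eqref{eq:sketch-annihilator}, introduce the conductivity solution $V\in H^1(\Omega)$ of $-\Div(\sigma\nabla V)=0$ with $V|_{\p\Omega}=b$, whence $\sigma\p_\nu V|_{\p\Omega}=\Lambda_\sigma b$. Since $\sigma\equiv 1$ in a neighborhood of $\p\Omega$, an integration by parts combined with the PDE from the first step gives $\langle U_f|_{\p\Omega},\Lambda_\sigma b\rangle - \langle \p_\nu U_f|_{\p\Omega},b\rangle = -c_s^{-1}\int_\Omega q u_f V\,dx$. Adding this relation to \eqref{eq:sketch-annihilator} yields the central identity
\begin{equation}\label{eq:sketch-key}
\langle U_f|_{\p\Omega},\alpha\rangle \;=\; -c_s^{-1}\int_\Omega q u_f V\,dx \qquad \forall f\in C_c^\infty(W).
\end{equation}

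The remaining and hardest step is to extract $\alpha=0$ from \eqref{eq:sketch-key}. I would rewrite the left-hand side as a global $L^2$-pairing against $u_f$ by using the Caffarelli--Silvestre extension self-adjointly to represent the surface pairing with $\alpha$ as a pairing with a distribution $\Phi_\alpha$ on $\R^n$ (morally a fractional Riesz potential of $\alpha\otimes\delta_{\p\Omega}$). Splitting $u_f=u_f\mathbf{1}_\Omega+f\mathbf{1}_{\Omega_e}$ and using $\supp f\subset W$, the identity \eqref{eq:sketch-key} takes the schematic form $\int_\Omega u_f\Theta\,dx + \int_W f\Psi\,dx = 0$ for all $f\in C_c^\infty(W)$, with $\Theta$ and $\Psi$ depending on $\alpha,b,V$. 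Since $f$ is arbitrary on $W$ and $\{u_f|_\Omega\}$ is dense in $L^2(\Omega)$ by Proposition~\ref{prop:(Runge-approximation-property)}, both $\Theta$ and $\Psi$ must vanish; combining these with the UCP of the nonlocal operator (Proposition~\ref{prop: UCP}) applied in the exterior then forces $\alpha=0$. The delicate point I expect to struggle with is the rigorous ``transfer'' of the boundary pairing $\langle U_f|_{\p\Omega},\alpha\rangle$ to a volume pairing against $u_f$: this requires a careful treatment of the surface distribution $\alpha\otimes\delta_{\p\Omega}$ within the extension framework of Section~\ref{sec: extension problem}, and justifying the exchange of integrations in $x$, $y$, and on the trace.
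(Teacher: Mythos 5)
Your reduction is sound as far as it goes: the identity $\Div(\sigma\nabla U_f)=d_s\,q\,u_f$ in $\Omega$ is exactly Lemma~\ref{Lem: C-S reduction}, the Hahn--Banach reformulation (show that any pair $(a,b)\in H^{-1/2}(\p\Omega)\times H^{1/2}(\p\Omega)$ annihilating all Cauchy pairs $(U_f|_{\p\Omega},\p_\nu U_f|_{\p\Omega})$ must satisfy $\alpha:=a+\Lambda_\sigma b=0$) is a correct restatement of the theorem, and the Green identity giving $\langle U_f|_{\p\Omega},\alpha\rangle=-d_s\int_\Omega q\,u_f\,V\,dx$ is fine up to constants. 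Note, though, that this is a genuinely different route from the paper, which first proves the $H^1(\Omega)$ Runge approximation of Proposition~\ref{prop: density} (there the Hahn--Banach argument is run in $H^1(\Omega)$ and the annihilating functional is treated through the adjoint degenerate extension problem of Lemma~\ref{Lem: solvable}), and then deduces the theorem by comparing $U_{g_k}$ with the conductivity solution $v_k$ having the same trace and showing $\p_\nu(U_{g_k}-v_k)\to 0$ in $H^{-1/2}(\p\Omega)$, a step in which the compact support of $q$ is used.

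The decisive step of your proposal, however, has a genuine gap. From $\int_\Omega u_f\,\Theta\,dx+\int_W f\,\Psi\,dx=0$ for all $f\in C_c^\infty(W)$ you infer that $\Theta$ and $\Psi$ vanish separately, ``since $f$ is arbitrary on $W$ and $\{u_f|_\Omega\}$ is dense in $L^2(\Omega)$.'' This does not follow: $u_f|_\Omega$ and $f|_W$ are produced by the same exterior datum and cannot be varied independently; when $u_f|_\Omega$ approximates a prescribed target in $L^2(\Omega)$ via Proposition~\ref{prop:(Runge-approximation-property)}, the corresponding $f$ is not controlled (its norm typically blows up), so the term $\int_W f\,\Psi\,dx$ cannot be discarded. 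The usual repair---solve the adjoint problem $((-\Div(\sigma\nabla))^s+q)w^\ast=\Theta$ in $\Omega$, $w^\ast=0$ in $\Omega_e$, and use symmetry of the bilinear form to convert the $\Omega$-term into $-\int_W f\,(-\Div(\sigma\nabla))^s w^\ast\,dx$---yields only the single relation $(-\Div(\sigma\nabla))^s w^\ast=\Psi$ in $W$, and passing from this to $\alpha=0$ requires giving rigorous meaning to your $\Phi_\alpha$ as $(-\Div(\sigma\nabla))^{s-1}$ applied to the surface distribution $\alpha\otimes\delta_{\p\Omega}$ (for $\alpha\in H^{-1/2}(\p\Omega)$ one expects only $\Phi_\alpha\in H^{1-2s}(\R^n)$) together with a unique continuation property for an operator of order $2(1-s)$ in this low-regularity class; neither is provided by Proposition~\ref{prop: UCP}, which applies to $H^s$-functions, nor supplied in your sketch. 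In short, the analytic core that the paper carries out through Proposition~\ref{prop: density} and Lemma~\ref{Lem: solvable} is missing here, and the fact that the hypothesis $\supp q\Subset\Omega$ never enters your argument is a further sign that the boundary behaviour has not actually been handled.
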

	
	The previous approximation result relies on the following new Runge approximation:
	
	\begin{proposition}[New Runge approximation]\label{prop: density}
		Let $\Omega, W\subset \R^n$ be bounded domains with smooth boundaries such that $\overline{\Omega} \cap \overline{W}=\emptyset$. Suppose that the diffusion coefficient $\sigma\in C^\infty (\R^n)$ satisfies \eqref{ellipticity}--\eqref{eq: unity cond} and the absorption coefficient $ q\in L^{\infty}(\Omega)$ is nonnegative. Let 
		\begin{equation}
			\begin{split}
				\mathcal{D} & \vcentcolon= \left\{ U_f(x)=\int_0^\infty y^{1-2s}\mathcal{P}^s_{\sigma}u_f (x,y) \, dy \,: \, f \in C^\infty_c(W) \right\},\\
				\mathcal{D}' & \vcentcolon = \left\{\left. U_f \right|_{\p \Omega}: \, U_f \in \mathcal{D}\right\},
			\end{split}
		\end{equation}
		and 
		\begin{equation}\label{solution space}
			S \vcentcolon = \left\{ v \in H^1(\Omega): \, \Div (\sigma \nabla v) =0 \text{ in }\Omega  \right\}.
		\end{equation}
		Given $v\in S$, for any $\eps>0$, there exists $U_f \in \mathcal{D}$ such that 
		\begin{equation}\label{equ: Runge error}
			\left\|  U_f -v\right\|_{H^1(\Omega)} <\eps .
		\end{equation}
		Furthermore, given $g\in H^{1/2}(\p \Omega)$, for any $\eps>0$, one can find $\left. U_f \right|_{\p \Omega}\in \mathcal{D}'$ such that 
		\begin{equation}
			\label{eq: trace approx}
			\left\| \left. U_f \right|_{\p\Omega}-g \right\|_{H^{1/2}(\p \Omega)}<\eps.
		\end{equation}
	\end{proposition}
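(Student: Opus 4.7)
I would proceed via a Hahn--Banach duality argument exploiting the Caffarelli--Silvestre extension. First, the trace approximation \eqref{eq: trace approx} is a direct consequence of the bulk approximation \eqref{equ: Runge error}: given $g\in H^{1/2}(\p\Omega)$, let $v_g\in S$ be the unique solution of the conductivity Dirichlet problem with $v_g|_{\p\Omega}=g$, approximate $v_g$ in $H^1(\Omega)$ by some $U_f\in\mathcal D$, and invoke the continuity of the trace $H^1(\Omega)\to H^{1/2}(\p\Omega)$. So the core task is to establish \eqref{equ: Runge error}. By the geometric form of the Hahn--Banach theorem, this amounts to showing that every $\ell\in(H^1(\Omega))^*$ with $\ell(U_f)=0$ for all $f\in C_c^\infty(W)$ must vanish on $S$. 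Fix such an $\ell$ and represent it via Riesz as $\ell(w)=\int_\Omega(h_0 w+h_1\cdot\nabla w)\,dx$ for some $h_0\in L^2(\Omega)$, $h_1\in L^2(\Omega;\R^n)$, both extended by zero to $\R^n$.

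Next I would introduce an adjoint function on the degenerate half-space. Using $U_f(x)=\int_0^\infty y^{1-2s}P_f(x,y)\,dy$ with $P_f:=\mathcal P_\sigma^s u_f$, Fubini rewrites
\[
\ell(U_f)=\int_{\R^{n+1}_+} y^{1-2s}\bigl(h_0 P_f+h_1\cdot\nabla_x P_f\bigr)\,dx\,dy.
\]
Let $\Psi\in H^1(\R^{n+1}_+;y^{1-2s})$ be the unique weak solution of a degenerate elliptic problem with weight $y^{1-2s}$ and coefficient $\sigma$, whose right-hand side encodes $(h_0,h_1)$ and whose boundary conditions at $y=0$ are arranged so that $\psi:=\Psi|_{y=0}$ vanishes on $W$; coercivity of the associated weighted bilinear form (together with the nonnegativity of $q$) makes this problem well posed. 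Testing the defining equation for $\Psi$ against $P_f$ and integrating by parts in the half-space, using the extension PDE $\Div(y^{1-2s}\sigma\nabla P_f)=0$ together with the identification of $(-\Div(\sigma\nabla))^s u_f$ as the weighted normal derivative of $P_f$ at $y=0$, collapses the bulk pairing to a trace identity on $\R^n\times\{0\}$ involving only $\psi$ and $u_f$.

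After using $(-\Div(\sigma\nabla))^s u_f+qu_f=0$ in $\Omega$ and $u_f=f$ in $\Omega_e$, this trace identity reduces to
\[
\ell(U_f)=c\,\bigl\langle(-\Div(\sigma\nabla))^s\psi,\,f\bigr\rangle_W
\]
for a nonzero constant $c$. The hypothesis $\ell(U_f)=0$ for every $f\in C_c^\infty(W)$ therefore forces $(-\Div(\sigma\nabla))^s\psi=0$ on $W$, while $\psi=0$ on $W$ by construction. The UCP for $(-\Div(\sigma\nabla))^s$ stated in Proposition~\ref{prop: UCP} then implies $\psi\equiv 0$ on $\R^n$, and feeding this back into the dual identity shows $\ell(v)=0$ for every $v\in S$, completing the Hahn--Banach argument.

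The main obstacle is the construction of $\Psi$ and the careful accounting of the integration by parts in the degenerate weighted setting: one must manage the weight $y^{1-2s}$ at $y=0$, control the behavior as $y\to\infty$, and ensure that the boundary contributions collapse precisely to a Cauchy pair (value and weighted normal derivative) of a single function $\psi$ on $\R^n$ to which the UCP can be applied. Once this analytic setup is in place, the remaining algebraic manipulations are routine.
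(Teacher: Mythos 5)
Your overall framework (Hahn--Banach duality plus an adjoint problem in the Caffarelli--Silvestre extension picture) is the same high-level strategy as the paper, and your reduction of \eqref{eq: trace approx} to \eqref{equ: Runge error} via solvability of the Dirichlet problem and the trace theorem is exactly right. But there is a genuine gap in the core argument. Your claimed trace identity $\ell(U_f)=c\,\langle(-\Div(\sigma\nabla))^s\psi,f\rangle_W$ is not correct: the adjoint function $\Psi$ solves $\Div_{x,y}(y^{1-2s}\Sigma\nabla_{x,y}\Psi)=y^{1-2s}\varphi$ with a nontrivial bulk source over $\Omega$, so it is \emph{not} the extension of its own trace $\psi=\Psi(\cdot,0)$, and its weighted Neumann trace $-\lim_{y\to0}y^{1-2s}\partial_y\Psi$ is not $d_s(-\Div(\sigma\nabla))^s\psi$. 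What the integration by parts actually gives (after imposing $\Psi(\cdot,0)=0$ on all of $\Omega_e\times\{0\}$ --- imposing it only on $W$ leaves an uncontrolled term over $\Omega_e\setminus\overline{W}$, where $(-\Div(\sigma\nabla))^su_f$ need not vanish --- together with the Robin condition involving $q$ on $\Omega\times\{0\}$) is $\ell(U_f)=\mp\int_W f\,\lim_{y\to 0}y^{1-2s}\partial_y\Psi\,dx$. So the hypothesis only yields vanishing of this weighted Neumann trace on $W$, and the tool needed next is a unique continuation argument for the \emph{local} degenerate operator in $\Omega_e\times(0,\infty)$ (where the source vanishes), which gives $\Psi=0$ in $\Omega_e\times(0,\infty)$ only. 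Your conclusion $\psi\equiv 0$ in $\R^n$ via Proposition~\ref{prop: UCP} cannot hold: it would force the adjoint, and hence essentially $\varphi$, to be trivial, i.e.\ it would show that every functional annihilating $\mathcal{D}$ is zero, which is false since $\overline{\mathcal{D}}$ is not all of $H^1(\Omega)$.

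More importantly, the step you summarize as ``feeding this back into the dual identity shows $\ell(v)=0$'' is the actual crux of the proposition and is missing. The target $v\in S$ solves only the local equation $\Div(\sigma\nabla v)=0$ in $\Omega$; it has no exterior values and satisfies no fractional equation, so the duality identity derived for the functions $U_f$ does not apply to it. In the paper this is handled by first upgrading $\Psi=0$ in $\Omega_e\times(0,\infty)$ to vanishing Cauchy data $\Psi=\partial_\nu\Psi=0$ on $\partial\Omega\times(0,\infty)$ (with weakly defined normal derivatives), and then integrating by parts in the cylinder $\Omega\times(0,\infty)$ against $v\,\beta_k(y)$, where the $\beta_k$ are vertical cutoffs with $\beta_k(0)=0$ living on scales $y\sim k$: the lateral boundary terms vanish by the Cauchy data, the contribution at $y=0$ vanishes since $\beta_k(0)=0$, the bulk term vanishes since $\Div(\sigma\nabla v)=0$, and the residual term $\int_{\Omega\times(0,\infty)}v\,\partial_y\Psi\,\partial_y\beta_k\,dy\,dx$ is shown to tend to zero using decay of the extension. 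Without this mechanism your argument never reaches $\ell(v)=0$, so the proposal as written does not close.
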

	
	Note that \eqref{eq: trace approx} is an immediate consequence of the solvability of the Dirichlet problem
	\begin{equation}
		\begin{cases}
			\Div(\sigma\nabla u)=0&\text{ in }\Omega,\\
			u=g&\text{ on }\partial\Omega
		\end{cases}
	\end{equation}
	for any $g\in H^{1/2}(\partial\Omega)$, \eqref{equ: Runge error} and the trace theorem.
	
	The above approximation result can be expected to hold since the Caffarelli-Silvestre (CS) type extension only brings the coefficient inside the nonlocal operator $(-\Div (\sigma\nabla))^s$ for the equation $( (-\Div (\sigma\nabla))^s +q ) u=0$ in $\Omega$ into the higher dimensional space. The potential only appears explicitly in the Robin-type boundary condition for the extension problem \eqref{eq: extension problem} on $\Omega\times \{0\}$. 	
	
	Furthermore, we found an interesting localization phenomena for the inverse problem of the nonlocal diffuse optical tomography equation, namely that one can determine $q$ in a certain neighborhood of the boundary, whenever $\sigma$ is known a priori in the same region without using the knowledge of $\sigma$ in the whole domain. 
	
	\begin{theorem}[Local uniqueness]\label{thm: uniqueness}
		Let $\Omega, W\subset \R^n$ be bounded domains with Lipschitz boundaries such that $\overline{\Omega} \cap \overline{W}=\emptyset$. Suppose that for $j=1,2$, the diffusion coefficient $\sigma_j\in C^\infty (\R^n)$ satisfies \eqref{ellipticity}--\eqref{eq: unity cond} and the absorption coefficient $ q_j\in C^0(\overline{\Omega})$ is nonnegative, for $j=1,2$.  If one has $\sigma_1=\sigma_2$ in a neighborhood $\mathcal{N}\subset \overline{\Omega}$ of $\p \Omega$, then \eqref{nonlocal DN map same} implies that $q_1=q_2$ in $\mathcal{N}$.	
	\end{theorem}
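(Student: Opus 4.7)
The plan is to combine an Alessandrini-type integral identity, derived via the Caffarelli--Silvestre (CS) extension, with the Runge approximation from Proposition~\ref{prop: density}. For $f_1, f_2 \in C_c^\infty(W)$, let $u_j$ be the solution of the nonlocal equation with coefficients $(\sigma_j, q_j)$ and exterior data $f_j$, and let $\mathcal{E}_j := \mathcal{P}^s_{\sigma_j} u_j$ denote its CS extension to $\R^{n+1}_+$. Testing the weak form of the extension problem (with its Robin-type condition $-c_s\lim_{y\to 0^+}y^{1-2s}\partial_y\mathcal{E}_j + q_j u_j = 0$ on $\Omega\times\{0\}$) cross-wise between the two problems yields
\[
\langle (\Lambda^s_{\sigma_1,q_1} - \Lambda^s_{\sigma_2,q_2})f_1, f_2\rangle = c_s\int_{\R^{n+1}_+} y^{1-2s}(\sigma_1-\sigma_2)\nabla\mathcal{E}_1\cdot\nabla\mathcal{E}_2\,dxdy + \int_\Omega (q_1-q_2)u_1u_2\,dx.
\]
The hypothesized DN-map equality on $W$ forces the left-hand side to vanish, and since $\sigma_1 = \sigma_2$ on $\mathcal{N}\cup\Omega_e$, the first term on the right is supported on $(\Omega\setminus\mathcal{N})\times\R_+$.

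Next, I would use Proposition~\ref{prop: density} applied to both $\sigma_1$ and $\sigma_2$, combined with the classical Runge approximation for the conductivity operator. Given any pair of test functions $\phi_1,\phi_2\in C^\infty(\overline{\mathcal{N}})$ that solve $\Div(\sigma\nabla \phi_j)=0$ in $\mathcal{N}$ (with $\sigma := \sigma_1 = \sigma_2$ there), I would first approximate each $\phi_j$ in $H^1(\mathcal{N})$ by a globally $\sigma_j$-harmonic function $v_j \in S$ on $\Omega$, and then invoke Proposition~\ref{prop: density} to realize each $v_j$ as the $H^1(\Omega)$-limit of integrated extensions $U_{f_j^{(k)}}$ for data $f_j^{(k)} \in C_c^\infty(W)$.

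The main obstacle is to show that, along these approximating sequences, the leak integral
\[
c_s\int_{(\Omega\setminus\mathcal{N})\times\R_+} y^{1-2s}(\sigma_1-\sigma_2)\nabla\mathcal{E}_1^{(k)}\cdot\nabla\mathcal{E}_2^{(k)}\,dxdy
\]
tends to zero. The delicate point is that Proposition~\ref{prop: density} controls only the integrated extensions $U_{f_j^{(k)}}(x)=\int_0^\infty y^{1-2s}\mathcal{E}_j^{(k)}(x,y)\,dy$ in $H^1(\Omega)$, whereas the identity involves the full two-variable extension in a weighted $H^1$-norm on the strip above $\Omega\setminus\mathcal{N}$. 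To bridge this gap, I would employ Caccioppoli/energy estimates for the degenerate elliptic operator $\Div(y^{1-2s}\sigma\nabla\,\cdot\,)$ on $(\Omega\setminus\mathcal{N})\times\R_+$, where $\mathcal{E}_j^{(k)}$ solves the homogeneous equation with no Robin forcing since $\sigma_1=\sigma_2$ there, together with a decay/tail argument in the $y$-direction that transfers control of the integrated extension to control of the full weighted gradient energy on this strip.

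With this control in hand, passing to the limit in the identity reduces it to
\[
\int_{\mathcal{N}}(q_1-q_2)\phi_1\phi_2\,dx = 0 \quad\text{for all } \sigma\text{-harmonic } \phi_1,\phi_2 \in C^\infty(\overline{\mathcal{N}}).
\]
By the classical density of products of such local conductivity-harmonic functions in $L^1(\mathcal{N})$ (e.g.\ via the Sylvester--Uhlmann CGO construction, applicable since $\sigma$ is smooth), one concludes $q_1 = q_2$ almost everywhere in $\mathcal{N}$, and continuity of $q_j$ on $\overline{\Omega}$ upgrades this to pointwise equality. The hardest step is the control of the leak term, which is the one place where the mismatch between the two-variable extension and the integrated object approximated in Proposition~\ref{prop: density} must be negotiated.
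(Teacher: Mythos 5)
Your Alessandrini-type identity is essentially correct (it is Lemma~\ref{lemma: diff DN maps} rewritten through the extension), but the strategy built on it does not close, and the two places where it breaks are exactly the ones you flag as ``delicate''. First, the leak term over $(\Omega\setminus\mathcal{N})\times(0,\infty)$: your justification that $\mathcal{E}^{(k)}_j$ solves the homogeneous problem there ``since $\sigma_1=\sigma_2$ there'' is wrong --- $\Omega\setminus\mathcal{N}$ is precisely where the coefficients may differ (which is why the term survives at all), and the Robin data on $\Omega\times\{0\}$ is $d_s q_j u^{(j)}$, not zero. More importantly, no mechanism is offered that makes this term small: Caccioppoli and tail estimates give boundedness of the weighted energies of the extensions over the strip, not smallness, and Proposition~\ref{prop: density} controls only the $y$-integrated functions $U_{f^{(k)}}$ in $H^1(\Omega)$; nothing forces the full weighted gradient energy of $\mathcal{P}^s_{\sigma_j}u^{(j)}_{f^{(k)}}$ above $\Omega\setminus\mathcal{N}$ to degenerate along your sequences. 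Second, the potential term in your identity is $\int_\Omega(q_1-q_2)u_1u_2\,dx$: it is taken over all of $\Omega$ (nothing localizes it to $\mathcal{N}$, since a priori $q_1-q_2$ is unknown everywhere), and it involves the nonlocal solutions $u_{f^{(k)}}$, which are \emph{not} the objects approximated by Proposition~\ref{prop: density} (that proposition approximates $\sigma$-harmonic functions by $U_f$, and gives no convergence of the $u_{f^{(k)}}$ to your $\phi_j$). Hence the passage to $\int_{\mathcal{N}}(q_1-q_2)\phi_1\phi_2\,dx=0$ is unjustified on both counts, and the concluding density-of-products/CGO step would in any case impose dimensional restrictions the theorem does not have.

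The paper's proof follows a different route that avoids all of this and, notably, does not use Proposition~\ref{prop: density} (nor Theorem~\ref{thm: uniqueness of potential}) at all. From \eqref{nonlocal DN map same} the Neumann data $\lim_{y\to 0}y^{1-2s}\partial_y\mathcal{P}^s_{\sigma_j}u^{(j)}_{f}$ of the two extensions agree on $W\times\{0\}$, and their traces agree there as well (both equal $f$); since $\sigma_1=\sigma_2$ on $\mathcal{N}\cup\Omega_e$, the UCP for the degenerate extension equation yields $\mathcal{P}^s_{\sigma_1}u^{(1)}_{f}=\mathcal{P}^s_{\sigma_2}u^{(2)}_{f}$ on $(\mathcal{N}\cup\Omega_e)\times(0,\infty)$, hence $u^{(1)}_{f}=u^{(2)}_{f}$ and $U^{(1)}_{f}=U^{(2)}_{f}$ in $\mathcal{N}$. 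Then, in the identity \eqref{DN-integral id 2}, one uses the \emph{standard} nonlocal Runge approximation (Proposition~\ref{prop:(Runge-approximation-property)}) to choose data $f_{2,m}$ with $u^{(2)}_{f_{2,m}}-f_{2,m}\to\varphi$ in $\widetilde{H}^s(\Omega)$ for an arbitrary $\varphi\in C_c^{\infty}(\mathcal{N}\cap\Omega)$; because $\supp\varphi\subset\mathcal{N}$, where $\sigma_1=\sigma_2$ and $U^{(1)}_{f_1}=U^{(2)}_{f_1}$, the coefficient-difference terms vanish by support considerations rather than by any smallness estimate, and one obtains $q_1u^{(1)}_{f_1}=q_2u^{(2)}_{f_1}$ in $\mathcal{N}\cap\Omega$, hence $(q_1-q_2)u^{(1)}_{f_1}=0$ there. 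If $q_1\neq q_2$ on some ball in $\mathcal{N}\cap\Omega$, then $u^{(1)}_{f_1}$ and $(-\Div(\sigma_1\nabla))^su^{(1)}_{f_1}$ both vanish on it, so Proposition~\ref{prop: UCP} forces $u^{(1)}_{f_1}\equiv 0$, contradicting $f_1\neq 0$. The missing idea in your proposal is precisely this first step (Cauchy-data matching on $W$ plus UCP in the extended half-space), which localizes the argument to $\mathcal{N}$ and removes the need to control the leak term or the behaviour of the solutions on $\Omega\setminus\mathcal{N}$.
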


    It is noteworthy that for this result we are not using Theorem~\ref{thm: uniqueness of potential}. By unique continuation this clearly implies the following uniqueness result for real-analytic potentials.
	
	\begin{corollary}[Uniqueness of potential]
		\label{cor: uniqueness potential}
		Suppose that the assumptions of Theorem~\ref{thm: uniqueness} hold. If $q_j$ is real-analytic in domain $\Omega'\subset\Omega$ containing $\Omega\setminus \mathcal{N}$, then we have $q_1=q_2$ in $\Omega$.
	\end{corollary}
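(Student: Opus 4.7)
The plan is to combine Theorem~\ref{thm: uniqueness} with the identity principle for real-analytic functions. I would first observe that, under the present assumptions, Theorem~\ref{thm: uniqueness} directly yields $q_1=q_2$ on the boundary neighborhood $\mathcal{N}$. Setting $f\vcentcolon=q_1-q_2$, the function $f$ is then real-analytic on the domain $\Omega'$, since by hypothesis both $q_j$ are, and it vanishes identically on $\mathcal{N}\cap\Omega'$.

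The second step would be to propagate this vanishing to all of $\Omega'$ by analytic continuation. Since $\Omega'\subset\Omega$ is open and contains $\Omega\setminus\mathcal{N}$, which is closed in $\Omega$ but not open (the complement $\mathcal{N}\cap\Omega$ being open and nonempty), $\Omega'$ must strictly contain $\Omega\setminus\mathcal{N}$, and so $\Omega'\cap\mathcal{N}$ contains a nonempty open subset $U$ on which $f\equiv 0$. The identity principle for real-analytic functions on the connected open set $\Omega'$ then forces $f\equiv 0$ on all of $\Omega'$, and in particular on $\Omega\setminus\mathcal{N}\subset\Omega'$. Together with the already established equality on $\mathcal{N}$, this gives $q_1=q_2$ throughout $\Omega=\mathcal{N}\cup(\Omega\setminus\mathcal{N})$.

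The main (and essentially only) subtlety is verifying that $\Omega'\cap\mathcal{N}$ has nonempty interior, but this is a purely topological consequence of $\Omega'$ being an open set containing the relatively closed set $\Omega\setminus\mathcal{N}$, rather than a substantive analytic step; all the real work is already packaged into Theorem~\ref{thm: uniqueness}.
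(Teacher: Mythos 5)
Your proposal is correct and follows essentially the same route the paper intends: Theorem~\ref{thm: uniqueness} gives $q_1=q_2$ on $\mathcal{N}$, and the identity principle (unique continuation) for the real-analytic function $q_1-q_2$ on the connected open set $\Omega'$, which vanishes on the nonempty open set $\Omega'\cap\mathcal{N}$, propagates the equality to $\Omega\setminus\mathcal{N}$ and hence to all of $\Omega$. The only minor gloss is that your parenthetical argument that $\Omega\setminus\mathcal{N}$ is not open tacitly uses the connectedness of the domain $\Omega$ (a nonempty proper subset that is closed in $\Omega$ could otherwise also be open); with that remark the topological point is complete.
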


    Let us mention that if $\Lambda_{\sigma_1,q_1}^s=\Lambda^s_{\sigma_2,q_2}$ would imply $\Lambda_{\sigma_1,q_1}=\Lambda_{\sigma_2,q_2}$, then the counterexamples \eqref{eq: new potential} suggest that Theorem~\ref{thm: uniqueness} and Corollary~\ref{cor: uniqueness potential} hold. Combining Theorem~\ref{thm: uniqueness of potential} and Theorem~\ref{thm: uniqueness}, we obtain the following simultaneous unique determination result.
	
	\begin{theorem}[Unique determination]
		\label{thm: unique det}
		Let $\Omega, W\subset \R^n$ be bounded domains with Lipschitz boundaries such that $\overline{\Omega} \cap \overline{W}=\emptyset$. Suppose that for $j=1,2$, the diffusion coefficient $\sigma_j\in C^\infty (\R^n)$ satisfies \eqref{ellipticity}--\eqref{eq: unity cond} and the absorption coefficient $ q_j\in C^0(\overline{\Omega})$ is nonnegative, for $j=1,2$. 
        If one has $\sigma_1=\sigma_2$ in a neighborhood $\mathcal{N}\subset \overline{\Omega}$ of $\p \Omega$, either $q_1$ or $q_2$ vanishes in $\Omega$ and  
		\begin{equation}\label{nonlocal DN map same}
			\left.\Lambda^s_{\sigma_1,q_1}f\right|_W=\left.\Lambda^s_{\sigma_2,q_2}f\right|_W \text{ for all }f\in C_c^{\infty}(W),
		\end{equation}
		then there holds
		\begin{equation}
			\sigma_1=\sigma_2 \text{ and }q_1=q_2=0 \text{ in }\Omega.
		\end{equation}
	\end{theorem}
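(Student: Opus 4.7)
The plan is to chain Theorem~\ref{thm: uniqueness} and Theorem~\ref{thm: uniqueness of potential}: the former gives equality of $q_1$ and $q_2$ in a neighbourhood of $\partial\Omega$, while the latter is used to upgrade the equality of the nonlocal DN maps on $W$ to equality of the local conductivity DN maps on $\partial\Omega$. At that point the classical Calder\'on theorem disposes of $\sigma$, and a standard UCP-plus-Runge argument disposes of $q$.

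Without loss of generality I assume $q_2 \equiv 0$ in $\Omega$. Theorem~\ref{thm: uniqueness} applied under the hypothesis $\sigma_1=\sigma_2$ in $\mathcal{N}$ immediately yields $q_1=q_2$ in $\mathcal{N}$, hence $q_1 \equiv 0$ in $\mathcal{N}$. The core of the argument is to deduce $\Lambda_{\sigma_1}=\Lambda_{\sigma_2}$ on $H^{1/2}(\partial\Omega)$. Write $U^{(j)}_f$ for the CS-integrated solution attached to $(\sigma_j,q_j)$ and $V^{(j)}=\mathcal{P}^s_{\sigma_j}u^{(j)}_f$ for its Caffarelli--Silvestre extension. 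For a fixed $f \in C^\infty_c(W)$, I claim that
\[
U^{(1)}_f\big|_{\partial\Omega} = U^{(2)}_f\big|_{\partial\Omega}, \qquad \partial_\nu U^{(1)}_f\big|_{\partial\Omega} = \partial_\nu U^{(2)}_f\big|_{\partial\Omega}.
\]
Indeed, since $\sigma_1=\sigma_2=1$ in $\Omega_e$, the extensions $V^{(1)}$ and $V^{(2)}$ satisfy the same degenerate elliptic equation on $\Omega_e \times (0,\infty)$, and on $W\times\{0\}$ they share the same Cauchy data: both traces equal $f$ and both weighted normal derivatives equal $-c_{n,s}\,\Lambda^s_{\sigma_j,q_j}f$, which coincide by hypothesis. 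A UCP for the CS extension (R\"uland-type, across the flat Dirichlet piece $W\times\{0\}$) combined with propagation through the connected set $\Omega_e\times(0,\infty)$ forces $V^{(1)}=V^{(2)}$ there, and integrating against $y^{1-2s}$ and approaching $\partial\Omega$ from the exterior gives the Cauchy-data identity.

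Given this identity, fix $g \in H^{1/2}(\partial\Omega)$ and apply Theorem~\ref{thm: uniqueness of potential} to $(\sigma_1,q_1)$ to produce $(g_k)\subset C^\infty_c(W)$ with $U^{(1)}_{g_k}|_{\partial\Omega}\to g$ in $H^{1/2}$ and $\partial_\nu U^{(1)}_{g_k}|_{\partial\Omega}\to \Lambda_{\sigma_1}g$ in $H^{-1/2}$. By the Cauchy-data identity above, the same convergences hold with $U^{(2)}_{g_k}$ in place of $U^{(1)}_{g_k}$. On the other hand, since $q_2\equiv 0$, integrating the CS equation in $y$ shows that $U^{(2)}_{g_k}$ is $\sigma_2$-harmonic in $\Omega$, so the continuity of $\Lambda_{\sigma_2}$ gives $\partial_\nu U^{(2)}_{g_k}|_{\partial\Omega}\to \Lambda_{\sigma_2}g$; uniqueness of limits then gives $\Lambda_{\sigma_1}g=\Lambda_{\sigma_2}g$. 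The classical Calder\'on theorem applied to the smooth uniformly elliptic coefficients $\sigma_j$ then delivers $\sigma_1=\sigma_2$ in $\Omega$.

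Finally, with $\sigma_1=\sigma_2=:\sigma$ and $q_2\equiv 0$, for each $f\in C^\infty_c(W)$ the difference $v=u^{(1)}_f-u^{(2)}_f \in H^s(\R^n)$ vanishes in $\Omega_e$ and satisfies $(-\Div(\sigma\nabla))^s v=0$ in $W$ by the DN map hypothesis; the UCP of $(-\Div(\sigma\nabla))^s$ forces $v\equiv 0$, so $u^{(1)}_f=u^{(2)}_f$ in $\R^n$. Substituting back into the PDE for $u^{(1)}_f$ gives $q_1 u^{(1)}_f=0$ in $\Omega$ for every admissible $f$, and a standard Runge approximation for the nonlocal conductivity-type operator (density of $\{u^{(1)}_f|_\Omega : f\in C^\infty_c(W)\}$ in $L^2(\Omega)$) concludes $q_1\equiv 0$ in $\Omega$. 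The hard part will be the CS-extension UCP step producing the Cauchy-data identity on $\partial\Omega$: inside $\Omega$ the two extensions solve genuinely different degenerate elliptic problems, so one must carefully propagate unique continuation through the exterior component while keeping track of the trace regularity of $U^{(j)}_f$ across $\partial\Omega$.
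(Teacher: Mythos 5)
Your overall architecture coincides with the paper's: reduce to $q_2\equiv 0$, use Theorem~\ref{thm: uniqueness} near $\partial\Omega$, transfer the nonlocal data to the Cauchy data of the integrated extensions $U^{(j)}_f$ on $\partial\Omega$, run Theorem~\ref{thm: uniqueness of potential} to get $\Lambda_{\sigma_1}=\Lambda_{\sigma_2}$, invoke Sylvester--Uhlmann, and finish with UCP plus Runge for the potential (your last step is a rewriting of the paper's citation of GLX and is fine). The gap is exactly at the step you yourself flag as ``the hard part'': the identity of the \emph{normal derivatives} $\partial_\nu U^{(1)}_f|_{\partial\Omega}=\partial_\nu U^{(2)}_f|_{\partial\Omega}$ (the paper's \eqref{eq: equ neumann der}). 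Your route propagates the UCP only through $\Omega_e\times(0,\infty)$ and then wants to ``approach $\partial\Omega$ from the exterior''. Equality of the traces does follow that way (the $U^{(j)}_f$ are $H^1_{\mathrm{loc}}$ across $\partial\Omega$), but the normal derivative that enters Theorem~\ref{thm: uniqueness of potential} and the subsequent DN-map argument is the \emph{interior} conormal derivative attached to the equation $\Div(\sigma_j\nabla U^{(j)}_f)=d_sq_ju^{(j)}_f$ in $\Omega$. Knowing $U^{(1)}_f=U^{(2)}_f$ only in $\Omega_e$ does not control it: one would need a transmission (no-jump) statement across $\partial\Omega$, and the exterior normal derivative is not even accessible by the standard $E_2$/normal-trace machinery, since in $\Omega_e$ one has $\Delta U^{(j)}_f=-d_s(-\Div(\sigma_j\nabla))^su^{(j)}_f$, which near $\partial\Omega$ is in general only an $H^{-s}$ distribution (for $s\geq 1/2$ it need not be $L^2$, and $H^{-s}$ may even contain surface-supported terms), so ``approaching from the exterior'' is not a proof.

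The repair is available to you and is what the paper does: run the UCP propagation through $(\mathcal{N}\cup\Omega_e)\times(0,\infty)$, using the hypothesis $\sigma_1=\sigma_2$ in $\mathcal{N}$, exactly as in the proof of Theorem~\ref{thm: uniqueness}; this yields $U^{(1)}_f=U^{(2)}_f$ and $\sigma_1\nabla U^{(1)}_f=\sigma_2\nabla U^{(2)}_f$ on an \emph{interior} neighborhood of $\partial\Omega$ as well (see \eqref{U_1=U_2 1}). Then the difference field $F=\sigma_1\nabla U^{(1)}_f-\sigma_2\nabla U^{(2)}_f$ vanishes identically near $\partial\Omega$, a cutoff computation as in \eqref{some comp} shows $\Div F\in L^2$ on a larger domain with no singular part on $\partial\Omega$, and the generalized normal trace for $E_2$ vector fields (Lemma~\ref{lemma: normal trace} and Lemma~\ref{auxialiary lemma}) gives a rigorous meaning to $\partial_\nu U^{(j)}_f|_{\partial\Omega}\in H^{-1/2}(\partial\Omega)$ and the desired equality. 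Without some substitute for this normal-trace argument, your Cauchy-data identity is not established, and the rest of your (otherwise correct) chain does not start.
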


    Let us remark that the unique recovery of $\sigma_1=\sigma_2$ in $\Omega$ with $q_1=q_2=0$ has been studied in \cite{CGRU2023reduction}.  In this work, the authors investigated a new reduction method via the CS type extension, which leads to the observation that the local DN map $\Lambda_\sigma$ can be determined by the nonlocal DN map $\Lambda_{\sigma}^s=\Lambda_{\sigma,0}^s$. This idea is also of help in our nonlocal diffuse optical tomography problem. Conversely, when $\sigma$ is a given uniformly elliptic, matrix-valued function, the unique determination of the potential $q$ in \eqref{equ: main} has been studied in \cite{GLX}, which remains open for $n\geq 3$ for the local counterpart. Moreover, in \cite{zimmermann2023inverse} a global unique determination result for the nonlocal optical tomography problem (see above \ref{item nonlocal opt tomography}) has been found, but in contrast to the problem treated in this article there are several crucial differences:
	\begin{enumerate}[(a)]
		\item In analogy with the local case, the Liouville transformation $u\mapsto \gamma^{1/2}u$ maps the unique solution $u$ of the nonlocal optical tomography equation \eqref{eq: nonlocal opt tom eq} to the unique solution $v$ of the fractional Schr\"odinger equation
		\[
		( (-\Delta)^s+Q_{\gamma}) v=0\text{ in }\Omega\text{ with }Q_{\gamma}=-\frac{(-\Delta)^s( \sqrt{\gamma}-1)}{\sqrt{\gamma}}+\frac{q}{\gamma}.
		\]
		\item One does not need the assumption that the potential $q$ is compactly contained in $\Omega$.
		\item In \cite{zimmermann2023inverse}, it is shown that the assumptions on the potential and diffusion coefficients are sharp as otherwise one may construct counterexamples. 
	\end{enumerate}
	Let us note that in the problem in  \cite{zimmermann2023inverse} as well as in the nonlocal diffuse optical tomography problem \eqref{equ: main} with one potential vanishing, considered in this article, one can first recover the diffusion coefficient $\sigma$ or $\gamma$, respectively, and then the potential $q$.
	
	On the other hand, such a reduction could most likely not be used to establish Theorem~\ref{thm: unique det} by the ill-posedness of the local diffuse optical tomography problem. Although Theorem~\ref{thm: uniqueness} seems to hold in the classical case, the main obstruction is to expand the equality $q_1=q_2=0$ in $\mathcal{N}$ to the whole domain $\Omega$.


	\subsection{Ideas of the proof.}
	\label{subsec: ideas of proof} To prove Theorem~\ref{thm: uniqueness of potential}, we make use of the CS type extension for the nonlocal operator $( -\Div (\sigma \nabla))^s$ with $0<s<1$, that is 
	\begin{equation}\label{equ: extension problem}
		\begin{cases}
			\Div_{x,y}   (  y^{1-2s} \Sigma  \nabla_{x,y}\mathcal{P}_{\sigma}^s u )  =0 &\text{ in }\R^{n+1}_+,\\
			\mathcal{P}_{\sigma}^s u(x,0)=u(x) &\text{ on }\R^n,
		\end{cases}
	\end{equation}
	and it is well-known that there holds 
	\begin{equation}\label{extension Neumann}
		-\lim_{y\to 0}y^{1-2s}\p_y \mathcal{P}_{\sigma}^s u_f= d_s ( -\Div (\sigma \nabla) ) ^s u\text{ in }\R^n,
	\end{equation}
	where $\R^{n+1}_+:= \left\{(x,y) \in \R^{n+1} : \, x\in   \R^n \text{ and }y>0\right\}$, $d_s$ is a constant depending only on $s$, and $\Sigma$ is a $(n+1)\times (n+1)$ matrix of the form
	\begin{align}\label{Sigma(x)}
		\Sigma(x)=\left( \begin{matrix}
			\sigma(x) \mathbf{1}_n& 0\\
			0 & 1 \end{matrix} \right).
	\end{align}
	Here $\mathbf{1}_n$ denotes the $n\times n$ identity matrix. This type of extension problem was first studied in \cite{ST10} for the variable (matrix-valued) coefficients case and more general second order elliptic operators.

	Next, we consider the $A_2$-weighted $y$ integral of the CS type extension 
	\begin{equation}\label{the function v}
		U(x):=\int_0^\infty y^{1-2s}\mathcal{P}_{\sigma}^s u(x,y)\, dy.
	\end{equation}
	Integrating \eqref{equ: extension problem} with respect to the $y$-direction yields that 
	\begin{equation}\label{formal comp1}
		\begin{split}
			0&=\int_0^\infty 	\Div_{x,y}  (  y^{1-2s} \Sigma  \nabla_{x,y}\mathcal{P}_{\sigma}^s u )  \, dy \\
			&= \Div \left( \sigma \nabla \left( \int_0^\infty y^{1-2s}\mathcal{P}_{\sigma}^s u \, dy\right) \right)  +\int_0^\infty \p_y ( y^{1-2s} \p_y \mathcal{P}_{\sigma}^s u ) dy \\
			&= \Div (\sigma \nabla U) -\lim_{y\to 0}y^{1-2s}\p_y \mathcal{P}_{\sigma}^s u,
		\end{split}
	\end{equation} 
	which implies 
	\begin{equation}\label{formal comp2}
		-\Div ( \sigma \nabla U)= -\lim_{y\to 0}y^{1-2s}\p_y \mathcal{P}_{\sigma}^s u  =\underbrace{d_s ( -\Div(\sigma \nabla))^s u}_{\text{\eqref{extension Neumann}}}.
	\end{equation}
	Here we have assumed a suitable decay of $\mathcal{P}_{\sigma}^s u $ such that $\lim_{y\to \infty}y^{1-2s}\p_y \mathcal{P}_{\sigma}^s u =0$ at moment\footnote{This will be justified in Section \ref{sec: extension problem}.}.
	In particular, when $u_f$ is the solution to \eqref{equ: main}, the preceding derivations yield that 
	\begin{equation}\label{key: local-nonlocal}
		\Div ( \sigma \nabla U_f) = d_s q u_f \text{ in }\Omega,
	\end{equation}
	where $U_f$ is the function defined in \eqref{the function v} with $u=u_f$.

	Note that the left hand side of \eqref{key: local-nonlocal} is a local differential operator, and the right hand side comes from the nonlocal information of \eqref{equ: main}. Via \eqref{key: local-nonlocal}, one can see that the function $U_f$ may not solve the classical conductivity equation $\Div (\sigma \nabla v)=0$, since the source term $qu_f$ in \eqref{key: local-nonlocal} is in general nonzero. Surprisingly, we are able to prove that that any function in the set 
	$$
	\left\{ v\in H^1(\Omega): \, \Div (\sigma\nabla v)=0 \text{ in }\Omega\right\},
	$$
	can be approximated by a sequence of functions in 
	$$
	\left\{ U_f : \, f\in C^\infty_c(W) \right\}\subset H^1(\Omega)
	$$ 
	with respect to the $H^1(\Omega)$-norm (see Proposition~\ref{prop: density}). This result rests on the \emph{geometric form of the Hahn--Banach theorem} (see Section~\ref{subsec: new Runge} for detailed arguments).
	
	Furthermore, by this approximation property, the corresponding (local) DN map of \eqref{key: local-nonlocal} could be formally given by 
	\begin{align}
		\begin{split}
			\Lambda_{\sigma}:	H^{1/2}(\p \Omega)&\to H^{-1/2}(\p \Omega), \\
			\underbrace{\left.\int_0^\infty y^{1-2s}\mathcal{P}_{\sigma}^s u_f (x,y)\, dy\right|_{\p \Omega}}_{=\left. U_f\right|_{\p \Omega}} &\mapsto \underbrace{\left.\sigma \nabla \left(\int_0^\infty y^{1-2s}\mathcal{P}_{\sigma}^s u_f (x,y)\, dy \right) \cdot \nu \right|_{\p \Omega}}_{=\left. \sigma \nabla U_f\cdot \nu \right|_{\p \Omega}}.
		\end{split}
	\end{align}
	Thus, in this work, we aim to determine the local DN map $\Lambda_{\sigma}$ by the nonlocal one $\Lambda^s_{\sigma,q}$, even though the equation of $U_f$ involves more complicated elliptic equations, but the approximation property will help us to get rid of the zero order potential term. In short, we could determine $\sigma$ and $q$ separately, which involves both nonlocal and local results related to associated inverse problems.

	\subsection{Organization of the paper} 
	Our article is organized as follows. In Section \ref{sec: preliminaries}, we recall fractional Sobolev spaces and basic properties of the involved nonlocal operators. In Section \ref{sec: extension problem}, we introduce weighted Sobolev spaces and associated extension problem for our nonlocal operators. In Section \ref{sec: nonlocal to local}, we derive the relation between the nonlocal and the local problems. Moreover, we also provide the key approximation result in the same section. Using this Runge type approximation, we prove Theorem~\ref{thm: uniqueness of potential} in Section \ref{sec: proof of main thm}. Finally, in the same section, we also establish the proofs of Theorem~\ref{thm: uniqueness} and \ref{thm: unique det}.

	\section{The nonlocal problem}\label{sec: preliminaries}

	In this section, we review some known properties of the nonlocal operators considered in this article.
	
	\subsection{Fractional Sobolev spaces}	
	
	Let us start by recalling the definition of the fractional Sobolev spaces and the fractional Laplacian. 
	
	We denote by $\schwartz(\R^n)$ and $\tempered(\R^n)$ the space of Schwartz functions and tempered distributions, respectively. We define the Fourier transform $\fourier\colon \schwartz(\R^n)\to \schwartz(\R^n)$ by
	\begin{equation}
		\fourier f(\xi) \vcentcolon = \int_{\R^n} f(x)e^{-\mathrm{i}x \cdot \xi} \,dx,
	\end{equation}
	which is occasionally also denoted by $\widehat{f}$, and $\mathrm{i}=\sqrt{-1}$. By duality it can be extended to the space of tempered distributions and will again be denoted by $\fourier u = \widehat{u}$, where $u \in \tempered(\R^n)$, and we denote the inverse Fourier transform by $\ifourier$. 
	
	Given $s\in\R$, the $L^2$-based fractional Sobolev space $H^{s}(\R^{n})$ is the set of all tempered distributions $u\in\tempered(\R^n)$ such that
	\begin{equation}\notag
		\|u\|_{H^{s}(\mathbb{R}^{n})}\vcentcolon = \|\langle D\rangle^s u\|_{L^2(\R^n)}<\infty,
	\end{equation}
	where $\langle D\rangle^s$ is the Bessel potential operator of order $s$ having Fourier symbol $(1+|\xi|^2)^{s/2}$. 
	
	Next recall that the fractional Laplacian of order $s\geq 0$ is given as a Fourier multiplier via
	\begin{equation}\label{eq:fracLapFourDef}
		(-\Delta)^{s} u = \ifourier( \abs{\xi}^{2s}\widehat{u}(\xi)),
	\end{equation}
	for $u \in \tempered(\R^n)$ whenever the right-hand side is well-defined or as a singular integral by
	\begin{equation}
		\label{eq: sing int}
		(-\Delta)^su(x)=C_{n,s}\text{p.v.}\int_{\R^n}\frac{u(x)-u(y)}{|x-y|^{n+2s}}\,dy,
	\end{equation}
	where $C_{n,s}>0$ is a given constant depending only on $n,s$ and $\text{p.v.}$ denotes the Cauchy principal value.
	
	It is known that for $s\geq 0$ an equivalent norm on $H^s(\R^n)$ is given by
	\begin{equation}
		\label{eq: equivalent norm on Hs}
		\|u\|_{H^s(\R^n)}^*=( \|u\|_{L^{2}(\mathbb{R}^{n})}^{2}+\|(-\Delta)^{s/2}u\|_{L^{2}(\mathbb{R}^{n})}^{2})^{1/2}.
	\end{equation}
	Next we introduce some local variants of the above fractional Sobolev spaces. If $\Omega\subset \R^n$ is an open set and $s\in\mathbb{R}$,
	then we set
	\begin{align*}
		H^{s}(\Omega) & \vcentcolon =\left\{\,u|_{\Omega}\,: \, u\in H^{s}(\mathbb{R}^{n})\right\},\\
		\widetilde{H}^{s}(\Omega) & \vcentcolon =\text{closure of \ensuremath{C_{c}^{\infty}(\Omega)} in \ensuremath{H^{s}(\mathbb{R}^{n})}}.
	\end{align*}
	
	\subsection{Nonlocal elliptic operators}
	\label{subsec: nonlocal elliptic operators}
	
	Here, we recall the definition of the nonlocal operator $(-\Div(\sigma\nabla ))^s$, $0<s<1$, and discuss some of its properties. In this article we restrict our attention to the isotropic case, where $\sigma\colon\R^n\to \R$ is a smooth uniformly elliptic function such that $\sigma|_{\Omega_e}=1$. In this setting it is known that there exists a symmetric kernel $K^s_{\sigma}(x,y)$ comparable to the fractional Laplacian $(-\Delta)^s$, that is
	\begin{equation}
		\label{eq: ellipticity of op}
		\frac{c}{|x-y|^{n+2s}}\leq K^s_{\sigma}(x,y)\leq \frac{C}{|x-y|^{n+2s}},
	\end{equation}
	such that $(-\Div(\sigma\nabla ))^s$ defined via
	\begin{equation}\label{eq: weak form}
		\begin{split}
			\left\langle (-\Div(\sigma\nabla ))^su,v \right\rangle_{H^{-s}(\R^n)\times H^s(\R^n)} 
			=\int_{\R^{2n}}(u(x)-u(y))(v(x)-v(y))K^s_{\sigma}(x,y)\,dxdy,
		\end{split}
	\end{equation}
	for all $u,v\in H^{s}(\R^n)$ induces a bounded linear operator from $H^s(\R^n)$ to $H^{-s}(\R^n)$. Moreover, using the symmetry of $K^s_{\sigma}$ it is easy to see that the operator $(-\Div(\sigma\nabla ))^s$ is symmetric, that is one has
	\begin{equation}
		\label{eq: symmetry nonloc op}
		\left\langle (-\Div(\sigma\nabla ))^su,v \right\rangle_{H^{-s}(\R^n)\times H^s(\R^n)}=\left\langle (-\Div(\sigma\nabla ))^sv,u \right\rangle_{H^{-s}(\R^n)\times H^s(\R^n)}
	\end{equation}
	for all $u,v\in H^s(\R^n)$.
	
	By standard arguments, one easily shows that under the above assumptions on $\sigma$ and $0\leq q\in L^{\infty}(\Omega)$, the Dirichlet problem \eqref{equ: main} is well-posed for any $f\in H^s(\R^n)$, that is there exists a unique function $u\in H^s(\R^n)$ satisfying $u-f\in \widetilde{H}^s(\Omega)$ and 
	\begin{equation}
		\label{eq: bilinear form}
		B^s_{\sigma,q}(u,\varphi)\vcentcolon = \left\langle (-\Div(\sigma\nabla))^s u,\varphi \right\rangle_{H^{-s}(\R^n)\times H^s(\R^n)}+\int_{\Omega} qu\varphi\,dx=0
	\end{equation}
	for all $\varphi\in \widetilde{H}^s(\Omega)$. More concretely, one can use the kernel estimates \eqref{eq: ellipticity of op} and the fractional Poincar\'e inequality to deduce the boundedness and coercivity of the bilinear form \eqref{eq: bilinear form} on $\widetilde{H}^s(\Omega)$, then the Lax-Milgram theorem yields the well-posedness result as desired.
	Furthermore, if $f_1,f_2\in H^s(\R^n)$ satisfy $f_1-f_2\in\widetilde{H}^s(\Omega)$, then the corresponding unique solutions to \eqref{equ: main} coincide. Hence, the exterior value to solution map is well-defined on the abstract trace space $X_s=H^{s}(\R^n)/\widetilde{H}^s(\Omega)$ and for exterior value $f$ the unique solution will be denoted by $u_f$ in the rest of this article. Note that the well-posedness of \eqref{equ: main} also holds for general potential without sign assumption. Instead, one could consider suitable eigenvalue condition\footnote{For instance, the eigenvalue condition can be written as $0$ is not a Dirichlet eigenvalue of $( -\Div (\sigma\nabla))^s +q$ in $\Omega$.} to prove the well-posedness. However, in this work, we simply introduce the condition $q\geq 0$ in $\Omega$ for the self-consistency.
	
	\subsection{DN map}
	With the well-posedness of \eqref{equ: main} at hand, we can define the DN map rigorously by using the bilinear form \eqref{eq: bilinear form} induced by $(-\Div(\sigma\nabla ))^s+ q$ (see \eqref{eq: weak form}). For any $0<s<1$ and $(\sigma,q)$ as in Section~\ref{subsec: nonlocal elliptic operators}, we define the DN map $\Lambda_{\sigma,q}^s\colon X_s\to X_s^*$ via
	\begin{equation}
		\label{eq: DN map}
		\left\langle \Lambda_{\sigma,q}^s f,g \right\rangle=B_{\sigma,q}(  u_f, v_g) ,
	\end{equation}
	where $u_f\in H^s(\R^n)$ is the unique solution to \eqref{equ: main} and $v_g\in H^s(\R^n)$ is any representative of $g$. This immediately implies the following simple lemma:
	
	\begin{lemma}
		\label{lemma: diff DN maps}
		Let $\Omega,W\subset\R^n$ be bounded open sets such that $W\subset \Omega_e$ and $0<s<1$. Assume that $\sigma_1,\sigma_2\in C^{\infty}(\R^n)$ satisfy the conditions of Section~\ref{subsec: nonlocal elliptic operators} and $q_1,q_2\in L^{\infty}(\Omega)$ are given nonnegative potentials. Then there holds
		\begin{equation}\label{DN-integral id}
			\begin{split}
				\left\langle\Lambda_{\sigma_1,q_1}^s f_1,f_2 \right\rangle-\left\langle\Lambda_{\sigma_2,q_2}^s f_1,f_2\right\rangle=( B_{\sigma_1,q_1}-B_{\sigma_2,q_2}) ( u^{(1)}_{f_1},u^{(2)}_{f_2}) 
			\end{split}
		\end{equation}
		and
		\begin{equation}
			\label{DN-integral id 2}
			\left\langle\Lambda_{\sigma_1,q_1}^s f_1,f_2 \right\rangle-\left\langle\Lambda_{\sigma_2,q_2}^s f_1,f_2\right\rangle=B_{\sigma_1,q_1}( u^{(1)}_{f_1},u^{(2)}_{f_2})-B_{\sigma_2,q_2}( u^{(2)}_{f_1},u^{(2)}_{f_2})
		\end{equation}
		for all $f_1,f_2\in C_c^{\infty}(W)$, where $u_{f_k}^{(j)}\in H^s(\R^n)$ is the unique solution to \eqref{equ: main} with $\sigma=\sigma_j$, $q=q_j$ and exterior value $f_k$ for $1\leq j,k\leq 2$.
	\end{lemma}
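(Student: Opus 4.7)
The plan is to unpack the definition of the DN map given in \eqref{eq: DN map} and exploit two features of the bilinear form $B_{\sigma,q}$: its symmetry (inherited from the symmetry of the kernel $K_\sigma^s$, see \eqref{eq: symmetry nonloc op}) and the variational characterization of solutions, namely $B_{\sigma_j,q_j}(u^{(j)}_f,\varphi)=0$ for every $\varphi\in\widetilde H^s(\Omega)$. Throughout, I will repeatedly use the fact that both $f_2$ itself and $u^{(2)}_{f_2}$ are legitimate representatives of $f_2$ in the quotient $X_s=H^s(\R^n)/\widetilde H^s(\Omega)$, so either of them may be inserted in the second slot of \eqref{eq: DN map}.

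First I would prove the second identity \eqref{DN-integral id 2}, which is essentially immediate: choosing the representative $v_{f_2}=u^{(2)}_{f_2}$ in both terms on the left-hand side gives
\begin{equation}
\langle\Lambda_{\sigma_1,q_1}^s f_1,f_2\rangle = B_{\sigma_1,q_1}(u^{(1)}_{f_1},u^{(2)}_{f_2}),\qquad \langle\Lambda_{\sigma_2,q_2}^s f_1,f_2\rangle = B_{\sigma_2,q_2}(u^{(2)}_{f_1},u^{(2)}_{f_2}),
\end{equation}
and subtracting yields \eqref{DN-integral id 2}.

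Next I would prove \eqref{DN-integral id}. Only the second term requires some care: I need to replace $u^{(2)}_{f_1}$ by $u^{(1)}_{f_1}$ in the first argument of $B_{\sigma_2,q_2}(\cdot,u^{(2)}_{f_2})$. To do this I first apply the symmetry \eqref{eq: symmetry nonloc op} to get
\begin{equation}
B_{\sigma_2,q_2}(u^{(2)}_{f_1},u^{(2)}_{f_2})=B_{\sigma_2,q_2}(u^{(2)}_{f_2},u^{(2)}_{f_1}).
\end{equation}
Since both $u^{(1)}_{f_1}$ and $u^{(2)}_{f_1}$ agree with $f_1$ outside $\Omega$, their difference $\varphi\vcentcolon=u^{(2)}_{f_1}-u^{(1)}_{f_1}$ lies in $\widetilde H^s(\Omega)$, and because $u^{(2)}_{f_2}$ solves the Dirichlet problem with coefficients $(\sigma_2,q_2)$, the variational identity gives $B_{\sigma_2,q_2}(u^{(2)}_{f_2},\varphi)=0$. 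Hence
\begin{equation}
B_{\sigma_2,q_2}(u^{(2)}_{f_2},u^{(2)}_{f_1})=B_{\sigma_2,q_2}(u^{(2)}_{f_2},u^{(1)}_{f_1})=B_{\sigma_2,q_2}(u^{(1)}_{f_1},u^{(2)}_{f_2}),
\end{equation}
where in the last equality I used symmetry once more. Combining this with the expression already obtained for $\langle\Lambda_{\sigma_1,q_1}^s f_1,f_2\rangle$ produces \eqref{DN-integral id}.

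The only conceptual step is the representative-swapping argument in the second identity, which is a purely bookkeeping matter; no analytic difficulty is expected. Well-posedness of \eqref{equ: main} in $H^s(\R^n)$ and the symmetry of $B_{\sigma,q}$ are the sole inputs, both already established earlier in the section.
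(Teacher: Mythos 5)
Your proof is correct and uses essentially the same ingredients as the paper's: the definition \eqref{eq: DN map} with $u^{(2)}_{f_2}$ as representative, the symmetry of $B_{\sigma_j,q_j}$, and the representative-independence of the pairing (which the paper packages as the symmetry $\langle\Lambda^s_{\sigma_2,q_2}f_1,f_2\rangle=\langle\Lambda^s_{\sigma_2,q_2}f_2,f_1\rangle$ plus choosing $u^{(1)}_{f_1}$ as representative of $f_1$, and which you make explicit via the variational identity $B_{\sigma_2,q_2}(u^{(2)}_{f_2},\varphi)=0$ for $\varphi\in\widetilde H^s(\Omega)$). The two arguments differ only in bookkeeping, not in substance.
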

	\begin{proof}
		First note that using \eqref{eq: DN map} and the symmetry of the bilinear form $B_{\sigma_j,q_j}$ we have
		\[
		\begin{split}
			\left\langle\Lambda_{\sigma_2,q_2}^s f_1,f_2\right\rangle&=B_{\sigma_2,q_2}( u_{f_1}^{(2)},u_{f_2}^{(2)})=B_{\sigma_2,q_2}( u_{f_2}^{(2)},u_{f_1}^{(2)})=\left\langle\Lambda_{\sigma_2,q_2}^s f_2,f_1\right\rangle
		\end{split}
		\]
		for all $f_1,f_2\in X_s$. 
		Again using \eqref{eq: DN map} and the symmetry of $B_{\sigma_2,q_2}$, we get
		\[
		\begin{split}
			\left\langle\Lambda_{\sigma_1,q_1}^s f_1,f_2 \right\rangle-\left\langle\Lambda_{\sigma_2,q_2}^s f_1,f_2 \right\rangle&=\left\langle\Lambda_{\sigma_1,q_1}^s f_1,f_2 \right\rangle-\left\langle\Lambda_{\sigma_2,q_2}^s f_2,f_1 \right\rangle\\
			&=B_{\sigma_1, q_1}( u_{f_1}^{(1)},u_{f_2}^{(2)})-B_{\sigma_2,q_2}( u_{f_2}^{(2)},u_{f_1}^{(1)})\\
			&=( B_{\sigma_1,q_1}-B_{\sigma_2,q_2}) ( u^{(1)}_{f_1},u^{(2)}_{f_2}).
		\end{split}
		\]
		This completes the proof of the identity \eqref{DN-integral id}. Next, note that \eqref{eq: DN map} implies
		\[
		\begin{split}
			\left\langle\Lambda_{\sigma_1,q_1}^s f_1,f_2\right\rangle &= B_{\sigma_1,q_1}( u^{(1)}_{f_1},u^{(2)}_{f_2}),\\
			\left\langle\Lambda_{\sigma_2,q_2}^s f_1,f_2\right\rangle &=B_{\sigma_2,q_2}( u^{(2)}_{f_1},u^{(2)}_{f_2}).
		\end{split}
		\]
		Thus, \eqref{DN-integral id 2} follows by subtracting them. 
	\end{proof}

	\subsection{Runge approximation}
	
	From \cite[Theorem 1.3]{GLX}, it is known that the Runge approximation holds for variable coefficients nonlocal elliptic operators. The proof of the Runge approximation is based on the unique continuation property (UCP in short) for the operator $( -\Div ( \sigma \nabla ))^s$. For readers' convenience, let us review these properties:
	
	\begin{proposition}[{UCP, \cite[Theorem 1.2]{GLX}}]\label{prop: UCP}
		Let $\sigma\in C^\infty(\R^n)$ satisfy \eqref{ellipticity}, $0<s<1$ and $\mathcal{O}\subset \R^n$ a nonempty open set. Suppose that $u\in H^s(\R^n)$ satisfies 
		\[
		u=( -\Div ( \sigma \nabla ))^s u=0\text{ in }\mathcal{O}.
		\]  
		Then $u\equiv0$ in $\R^n$.
	\end{proposition}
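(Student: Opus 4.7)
The plan is to reduce the nonlocal unique continuation to a strong unique continuation statement for a degenerate elliptic equation in one dimension higher, via the Caffarelli--Silvestre (CS) type extension that will be studied in Section~\ref{sec: extension problem}. This is the standard route for UCP of fractional operators; the non-standard ingredient here is that the coefficient $\sigma$ is variable, so one has to work with the matrix extension \eqref{equ: extension problem}--\eqref{Sigma(x)} rather than the constant coefficient CS extension.

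First, I would set $U \vcentcolon= \mathcal{P}^s_\sigma u$ on $\R^{n+1}_+$. By construction, $U \in H^1(\R^{n+1}_+, y^{1-2s}\,dxdy)$ solves
\begin{equation}
\Div_{x,y}\bigl(y^{1-2s}\Sigma(x)\nabla_{x,y} U\bigr)=0\text{ in }\R^{n+1}_+,\qquad U(x,0)=u(x),
\end{equation}
and by the Dirichlet-to-Neumann relation \eqref{extension Neumann},
\begin{equation}
-\lim_{y\to 0}y^{1-2s}\p_y U(x,y)=d_s (-\Div(\sigma\nabla))^s u(x)\text{ in }\R^n.
\end{equation}
Hence the hypotheses $u=0$ and $(-\Div(\sigma\nabla))^s u=0$ on $\mathcal{O}$ translate exactly into vanishing Dirichlet and weighted Neumann traces of $U$ on $\mathcal{O}\times\{0\}$.

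Second, I would extend $U$ by zero to $\R^{n+1}_-$ across $\mathcal{O}\times\{0\}$. Because both the Dirichlet trace and the weighted conormal derivative vanish on $\mathcal{O}\times\{0\}$, and because the matrix $\Sigma(x)$ is block-diagonal with the $y$-row/column equal to $(0,\dots,0,1)$, a routine cut-off and integration-by-parts argument shows that the extended function $\widetilde U$ is a weak solution of the same degenerate elliptic equation in the open set $\R^n\times\R\setminus((\R^n\setminus\mathcal{O})\times\{0\})$, in particular in a full $(n+1)$-dimensional neighborhood of any point of $\mathcal{O}\times\{0\}$. Since $\widetilde U\equiv 0$ on the lower half space near such points, it vanishes on an open set of $\R^{n+1}$.

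Third, I would invoke the strong unique continuation principle for degenerate elliptic equations with Muckenhoupt $A_2$ weight $|y|^{1-2s}$ and smooth uniformly elliptic coefficients of the block form \eqref{Sigma(x)}; such a SUCP follows from Carleman estimates in the spirit of Fabes--Jerison--Kenig and of R\"uland, and applies here because $\sigma\in C^\infty$ is uniformly elliptic and $\Sigma$ has the required structure. This yields $\widetilde U\equiv 0$ in the connected open set containing $\R^{n+1}_+$, and hence $U\equiv 0$ in $\R^{n+1}_+$. Taking the trace on $\R^n\times\{0\}$ gives $u\equiv 0$ in $\R^n$, which is the desired UCP.

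The main obstacle I foresee is the third step, namely invoking a strong unique continuation theorem for the $A_2$-degenerate extension operator with variable coefficient matrix $\Sigma(x)$ as in \eqref{Sigma(x)}; this requires Carleman estimates adapted to the weight $y^{1-2s}$ and to the variable elliptic coefficient, which is exactly where the smoothness and uniform ellipticity of $\sigma$ are essential and where the argument differs from the constant-coefficient case of $(-\Delta)^s$.
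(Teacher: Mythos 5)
The paper does not prove this proposition at all: it is quoted directly from the literature, namely \cite[Theorem 1.2]{GLX}, so there is no in-paper argument to compare against. Your sketch (Caffarelli--Silvestre type extension, vanishing Dirichlet and weighted Neumann traces on $\mathcal{O}\times\{0\}$, extension/reflection across that set, then strong unique continuation for the $|y|^{1-2s}$-degenerate equation with the variable block matrix $\Sigma$) is essentially the strategy of the cited proof, with the genuinely hard ingredient being exactly the variable-coefficient $A_2$-weighted SUCP step that you correctly flag and that the citation is there to supply.
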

	
	\begin{proposition}[Runge approximation]
		\label{prop:(Runge-approximation-property)}
		Let $\Omega, W\subset\mathbb{R}^{n}$ be  bounded open sets with Lipschitz boundaries such that $\overline{\Omega}\cap \overline{W}=\emptyset$. Assume that $\sigma\in C^\infty(\R^n)$ satisfies \eqref{ellipticity} and $0\leq q\in L^{\infty}(\Omega)$. 
		Then the following inclusions are dense
		\begin{enumerate}[(i)]
			\item\label{Runge 1} $\left\{u_f|_{\Omega}: \, f\in C^\infty_c(W)\right\}\subset L^2(\Omega)$,
			\item\label{Runge 2} $\left\{u_f-f: \, f\in C^\infty_c(W)\right\}\subset \widetilde{H}^s(\Omega)$,
		\end{enumerate}
		where $u_f \in H^s(\R^n)$ is the solution to \eqref{equ: main}.
	\end{proposition}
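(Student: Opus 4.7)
The plan is to establish both density statements through a Hahn--Banach duality argument coupled with the unique continuation property of Proposition~\ref{prop: UCP}. I would first handle \ref{Runge 1}; statement \ref{Runge 2} then follows by the same scheme with only cosmetic modifications.

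Concretely, by the Hahn--Banach theorem it suffices to take $F\in L^2(\Omega)$ that annihilates $\mathcal{R}\vcentcolon=\{u_f|_\Omega:f\in C_c^\infty(W)\}$ and show $F=0$. To exploit this vanishing I would solve the adjoint Dirichlet problem for $\varphi\in\widetilde{H}^s(\Omega)$,
\begin{equation*}
B_{\sigma,q}(\varphi,\psi)=\int_\Omega F\psi\,dx\quad\text{for all }\psi\in\widetilde{H}^s(\Omega),
\end{equation*}
whose unique solvability follows from Lax--Milgram applied to $B_{\sigma,q}$ (coercivity on $\widetilde{H}^s(\Omega)$ being supplied by the kernel estimate \eqref{eq: ellipticity of op}, the fractional Poincar\'e inequality, and $q\geq 0$). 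By construction $\varphi$ is supported in $\overline{\Omega}$, and in particular $\varphi|_W=0$.

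The next step is to test the adjoint equation against $u_f-f$ and exploit symmetry. Because $W\cap\Omega=\emptyset$, one has $u_f-f\in\widetilde{H}^s(\Omega)$ with $(u_f-f)|_\Omega=u_f|_\Omega$, and the symmetry of $B_{\sigma,q}$ (which is immediate from $K^s_\sigma(x,y)=K^s_\sigma(y,x)$ and symmetry of the $q$-term) combined with the weak formulation for $u_f$ yields $B_{\sigma,q}(\varphi,u_f)=B_{\sigma,q}(u_f,\varphi)=0$. Splitting $u_f=(u_f-f)+f$ and substituting the dual equation gives
\begin{equation*}
0=B_{\sigma,q}(\varphi,u_f-f)+B_{\sigma,q}(\varphi,f)=\int_\Omega F\,u_f\,dx+B_{\sigma,q}(\varphi,f).
\end{equation*}
Since $f$ is supported in $W$ and $\varphi|_W=0$, the $q$-part of $B_{\sigma,q}(\varphi,f)$ drops out. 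Moreover, the disjointness $\overline{\Omega}\cap\overline{W}=\emptyset$ ensures that $K^s_\sigma(x,y)$ is smooth on a neighborhood of $W\times W$, so the nonlocal tail $(-\Div(\sigma\nabla))^s\varphi$ restricts to $W$ as a smooth function given by an absolutely convergent integral; therefore $B_{\sigma,q}(\varphi,f)=\int_W f\,(-\Div(\sigma\nabla))^s\varphi\,dx$. Varying $f\in C_c^\infty(W)$ forces $(-\Div(\sigma\nabla))^s\varphi=0$ in $W$.

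Together with $\varphi=0$ on $W$, the UCP of Proposition~\ref{prop: UCP} yields $\varphi\equiv 0$ on $\R^n$, which in turn forces $F=0$ through the dual equation, completing \ref{Runge 1}. For \ref{Runge 2} the same recipe applies verbatim with an annihilating functional $F\in H^{-s}(\Omega)=(\widetilde{H}^s(\Omega))^*$: one solves $B_{\sigma,q}(\varphi,\cdot)=\langle F,\cdot\rangle$ on $\widetilde{H}^s(\Omega)$ and the identical chain of identities reduces the problem to the UCP. The delicate point I would be most careful about is the rigorous interpretation of $(-\Div(\sigma\nabla))^s\varphi$ on $W$ and the distributional pairing with $f$, but the disjointness hypothesis $\overline{\Omega}\cap\overline{W}=\emptyset$ removes any singularity in the kernel over $W\times W$ and makes this step routine.
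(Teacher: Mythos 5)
Your proof is correct and takes essentially the same route as the paper: the paper's own proof simply cites \cite[Lemma~5.6]{GLX} for \ref{Runge 1} and combines Proposition~\ref{prop: UCP} with \cite[Theorem~4.3]{RZ2022unboundedFracCald} for \ref{Runge 2}, and those cited results are proved by exactly the Hahn--Banach/adjoint-solution/UCP duality argument you spell out. One cosmetic remark: no smoothness of $K^s_\sigma$ near $W\times W$ is needed, since $B_{\sigma,q}(\varphi,f)=0$ for all $f\in C_c^{\infty}(W)$ (the $q$-term vanishing because $f|_\Omega=0$) is already the distributional statement $(-\Div(\sigma\nabla))^s\varphi=0$ in $W$, which together with $\varphi|_W=0$ is all that Proposition~\ref{prop: UCP} requires.
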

	\begin{proof}
		The assertion \ref{Runge 1} has been proved in \cite[Lemma~5.6]{GLX} and \ref{Runge 2} follows by combining Proposition~\ref{prop: UCP} with \cite[Theorem~4.3]{RZ2022unboundedFracCald}.
	\end{proof}
	
	
	\section{The extension problem}\label{sec: extension problem}

	We review some important properties of the extension problem for $(-\Div (\sigma\nabla))^s$. We will also revisit some useful decay estimates for the solution of the extension problem. 
	Let us start by stating the following Caffarell--Silvestre (CS) type extension:
	\begin{definition}[CS-type extension]
		\label{def: CS type extension}
		Let $\sigma\in C^{\infty}(\R^n)$ satisfy the conditions in Section~\ref{subsec: nonlocal elliptic operators} and $0<s<1$. Suppose $p_t(x,z)$, $t>0$, is the (symmetric) heat kernel related to the parabolic equation
		\[
		\partial_t u-\Div (\sigma\nabla u)=0\text{ in }\R^n,\quad \lim_{t\downarrow 0}u=\delta_x.
		\]
		For any $u\in H^s(\R^n)$ we define its \emph{Caffarelli--Silvestre type extension} by
		\begin{equation}\label{CS extension of function}
			\mathcal{P}^s_{\sigma}u(x,y)\vcentcolon = \int_{\R^n}P_y(x,z)u(z)\,dz,
		\end{equation}
		where $P_y$ is the fractional Poisson kernel given by
		\begin{equation}
			\label{eq: frac poisson}
			P_y(x,z)=c_sy^{2s}\int_0^{\infty}e^{-y^2/4t}p_t(x,z)\frac{dt}{t^{1+s}}
		\end{equation}
		for some given constant $c_s>0$.
	\end{definition}
	
	\begin{remark}
		For the existence of a symmetric heat kernel $p_t$ as in the previous definition we refer to the monograph \cite{HeatKernelAnalysisManifolds} and furthermore by \cite{HeatKernelsSpectralTheory} we know that there are constants $\alpha_j,c_j>0$, $j=1,2$, such that
		\begin{equation}
			\label{eq: heat kernel comparable to classical}
			c_1e^{-\alpha_1|x-z|^2/4t}t^{-n/2}\leq p_t(x,z)\leq 
			c_2e^{-\alpha_2|x-z|^2/4t}t^{-n/2}
		\end{equation}
		for all $x,z\in\R^n$. Observe that without loss of generality we can assume that $\alpha_1\geq 1$ and $0<\alpha_2\leq 1$.
	\end{remark}

	\begin{lemma}\label{Lemma: degenerate and decay}
		Suppose that the assumptions of Definition~\ref{def: CS type extension} hold and denote by
		\begin{equation}
			\label{eq: poisson kernel frac lap}
			\widetilde{P}^s_y(x)=C_s\frac{y^{2s}}{( |x|^2+y^2)^{\frac{n+2s}{2}}}
		\end{equation}
		the fractional Poisson kernel related to the fractional Laplacian $(-\Delta)^s$.
		\begin{enumerate}[(i)]
			\item\label{equivalence of poisson kernels} There exist constants $C_1,C_2>0$ such that
			\begin{equation}
				\label{eq: comparison to poisson kernel of frac lap}
				C_1\widetilde{P}^s_{y/\sqrt{\alpha_1}}(x-z)\leq P_y(x,z)\leq C_2\widetilde{P}^s_{y/\sqrt{\alpha_2}}(x-z).
			\end{equation}
			\item\label{Lq estimate} For any $u\in L^p(\R^n)$ with $1\leq p\leq \infty$ and $1+\frac{1}{r}=\frac{1}{p}+\frac{1}{q}$, there holds
			\begin{equation}
				\label{eq: Lq estimate}
				\left\|\mathcal{P}_{\sigma}^su(\cdot,y)\right\|_{L^r(\R^n)}\lesssim y^{n(1- q)/q}\|u\|_{L^p(\R^n)},
			\end{equation}
			for any fixed $y>0$. Moreover, $\mathcal{P}_{\sigma}^s u \in L^p_{\mathrm{loc}}(\overline{\R^{n+1}_+},y^{1-2s})$ satisfies
			\begin{equation}
				\label{eq: lp loc estimate}
				\left\|\mathcal{P}_{\sigma}^s u \right\|_{L^p(\R^n\times (0,R),y^{1-2s})}\lesssim R^{2(1-s)/p}\|u\|_{L^p(\R^n)},
			\end{equation}
			for any $R>0$.
			\item\label{gradient estimate} For any $u\in L^p(\R^n)$ with $1\leq p\leq \infty$ and $1+\frac{1}{r}=\frac{1}{p}+\frac{1}{q}$, there holds
			\begin{equation}
				\label{eq: gradient bound}
				\left\|\nabla_{x,y}\mathcal{P}^s_{\sigma} u (\cdot,y) \right\|_{L^r(\R^n)}\lesssim y^{n(1-q)/q-1}\|u\|_{L^p(\R^n)}
			\end{equation}
			for any fixed $y>0$, where the exponent on the right hand side has to be interpreted as $-(n+1)$ in the case $p=1$.
			\item\label{solution} For any $u\in L^p(\R^n)$ with $1\leq p<\infty$, the function $\mathcal{P}_{\sigma}^s u$ solves the following extension problem
			\begin{equation}
				\label{eq: extension problem}
				\begin{cases}
					\Div_{x,y}   \left(  y^{1-2s} \Sigma  \nabla_{x,y} v \right)  =0 &\text{ in }\R^{n+1}_+,\\
					v (x,0)=u(x) &\text{ on }\R^n,
				\end{cases}
			\end{equation}
			where $\Sigma(x)$ is given by \eqref{Sigma(x)}.
			
			\item\label{neumann derivative} If $u\in H^s(\R^n)$, then one has $\nabla_{x,y}\mathcal{P}^s_{\sigma} u\in L^2(\R^{n+1}_+,y^{1-2s})$ satisfying
			\begin{equation}
				\label{eq: global regularity}
				\left\|\nabla_{x,y}\mathcal{P}^s_{\sigma} u\right\|_{L^2(\R^{n+1}_+,y^{1-2s})}\lesssim \|u\|_{H^s(\R^n)}
			\end{equation}
			and there holds
			\begin{equation}
				\label{eq: neumann derivative}
				-\lim_{y\to 0}y^{1-2s}\partial_y \mathcal{P}^s_{\sigma} u=d_s (-\Div(\sigma\nabla))^s u
			\end{equation}
			in $H^{-s}(\R^n)$ for some positive constant $d_s>0$.
		\end{enumerate}
	\end{lemma}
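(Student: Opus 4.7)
My overall strategy is to reduce all five claims to known Gaussian estimates for the heat kernel $p_t$ associated with $-\Div(\sigma\nabla)$. The point is that in Definition~\ref{def: CS type extension} the kernel $P_y$ is an explicit integral transform of $p_t$, so two-sided Gaussian bounds translate into two-sided bounds against the classical fractional Poisson kernel $\widetilde{P}^s_y$, from which all analytic properties follow in a uniform manner.

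For part \ref{equivalence of poisson kernels}, I would insert the Gaussian bounds \eqref{eq: heat kernel comparable to classical} into the defining formula \eqref{eq: frac poisson} and compute the resulting integrals explicitly. Each bound has the form $c_j y^{2s}\int_0^\infty e^{-(y^2 + \alpha_j|x-z|^2)/4t} t^{-n/2-1-s}\,dt/t$, which under the substitution $r = (y^2+\alpha_j|x-z|^2)/4t$ becomes a Gamma function times $(y^2+\alpha_j|x-z|^2)^{-(n+2s)/2}$. Rearranging yields precisely a constant multiple of $\widetilde{P}^s_{y/\sqrt{\alpha_j}}(x-z)$, giving the two-sided bound. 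For part \ref{Lq estimate}, the key observation is that although $\mathcal{P}^s_\sigma u(\cdot,y)$ is not a convolution (since $P_y$ is not translation invariant), the upper bound from \ref{equivalence of poisson kernels} dominates it pointwise by the convolution $\widetilde{P}^s_{y/\sqrt{\alpha_2}} * |u|$. A direct scaling calculation, $\|\widetilde{P}^s_y\|_{L^q(\R^n)} \simeq y^{n(1-q)/q}$, combined with Young's inequality, yields \eqref{eq: Lq estimate}. The weighted estimate \eqref{eq: lp loc estimate} follows by taking $r=p$, $q=1$ (so $\|\widetilde{P}^s_y\|_{L^1}$ is a constant) and integrating against the Muckenhoupt weight $y^{1-2s}$ over $(0,R)$.

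For part \ref{gradient estimate} the argument is similar in spirit, but one must first upgrade the Gaussian comparison \eqref{eq: heat kernel comparable to classical} to the gradient bound $|\nabla_x p_t(x,z)| \lesssim t^{-n/2-1/2} e^{-c|x-z|^2/t}$ and differentiate \eqref{eq: frac poisson} in $x$ and in $y$ under the integral sign. The same Gamma-function calculation as in \ref{equivalence of poisson kernels} then produces a pointwise bound of $|\nabla_{x,y}\mathcal{P}^s_\sigma u(\cdot,y)|$ by a constant multiple of $y^{-1}\widetilde{P}^s_{y/\sqrt{\alpha_2}} * |u|$, and another application of Young's inequality yields \eqref{eq: gradient bound}. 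The main obstacle I anticipate here is justifying the pointwise gradient bound on $p_t$ in the setting of smooth uniformly elliptic $\sigma$; I would cite the standard parabolic estimates from the literature on heat kernels (as already invoked in the excerpt's remark) rather than re-derive them.

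Finally, for \ref{solution} I would verify by direct differentiation under the integral sign that $(x,y)\mapsto \mathcal{P}^s_\sigma u(x,y)$ solves \eqref{eq: extension problem}, following the Stinga--Torrea construction: the exponential factor $e^{-y^2/4t}$ contributes the $y$-derivative part, while $p_t$ is annihilated by $\partial_t - \Div(\sigma\nabla)$, and the two combine after an integration by parts in $t$ to give the degenerate equation with matrix $\Sigma$ as in \eqref{Sigma(x)}. The boundary trace identity $\mathcal{P}^s_\sigma u(x,0) = u(x)$ follows from $P_y \to \delta_x$ in a suitable weak sense, which can be seen from \eqref{eq: comparison to poisson kernel of frac lap} and the analogous property of $\widetilde{P}^s_y$. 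For \ref{neumann derivative}, the natural route is via the energy identity: testing \eqref{eq: extension problem} against $\mathcal{P}^s_\sigma u$ itself yields
\begin{equation}
\int_{\R^{n+1}_+} y^{1-2s} \Sigma \nabla_{x,y}\mathcal{P}^s_\sigma u \cdot \nabla_{x,y}\mathcal{P}^s_\sigma u \, dx\,dy = d_s \langle (-\Div(\sigma\nabla))^s u, u \rangle_{H^{-s}\times H^s},
\end{equation}
which gives both the quantitative bound \eqref{eq: global regularity} (via the ellipticity of $\Sigma$ and the equivalent norm \eqref{eq: equivalent norm on Hs}) and, by polarization against an arbitrary test function, the weak-sense Neumann identity \eqref{eq: neumann derivative}. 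The subtlety I expect to require the most care is identifying the constant $d_s$ and rigorously justifying the limit $y\to 0$ of $y^{1-2s}\partial_y \mathcal{P}^s_\sigma u$ as a distribution in $H^{-s}(\R^n)$; this is handled by testing against $C_c^\infty$ functions, applying the integration by parts in $y$, and using the bilinear form \eqref{eq: weak form} together with the symmetry of the kernel $K^s_\sigma$.
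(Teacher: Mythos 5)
Your treatment of parts \ref{equivalence of poisson kernels}--\ref{solution} matches the paper's proof essentially step by step: the same substitution reducing the subordination integral to a Gamma function, the same pointwise domination $|\mathcal{P}^s_\sigma u|\leq C\,\widetilde{P}^s_{y/\sqrt{\alpha_2}}\ast|u|$ followed by the scaling identity for $\|\widetilde{P}^s_y\|_{L^q}$ and Young's inequality, the same gradient bound on $p_t$ imported from the heat-kernel literature (the paper cites \cite{CJKS20}) and differentiation under the integral sign, and the same appeal to the Stinga--Torrea construction \cite{ST10} for the degenerate PDE and the $L^p$ boundary trace.

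The gap is in part \ref{neumann derivative}, and it is a circularity. You propose to obtain \eqref{eq: neumann derivative} from the energy identity by polarization, but the energy identity you write down already presupposes \eqref{eq: neumann derivative}: testing \eqref{eq: extension problem} against $\mathcal{P}^s_\sigma u$ only produces the boundary term $-\int_{\R^n}\bigl(\lim_{y\to 0}y^{1-2s}\partial_y\mathcal{P}^s_\sigma u\bigr)\,u\,dx$, and identifying that trace with $d_s\langle(-\Div(\sigma\nabla))^s u,u\rangle$ is exactly the statement to be proved. Your fallback --- ``testing against $C_c^\infty$ functions, integration by parts in $y$, and the symmetry of $K^s_\sigma$'' --- does not supply the missing mechanism either: the identification of the weighted Neumann trace of the heat-semigroup Poisson extension with the nonlocal operator defined via the jump kernel $K^s_\sigma$ requires the Stinga--Torrea extension theorem (a spectral/semigroup computation on the explicit kernel \eqref{eq: frac poisson}), which is precisely what the paper invokes (\cite[Theorem~1.1]{ST10}) for $u\in C_c^\infty(\R^n)$. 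Moreover, for a general $u\in H^s(\R^n)$ you cannot legitimately test the equation against $\mathcal{P}^s_\sigma u$ before knowing $\nabla_{x,y}\mathcal{P}^s_\sigma u\in L^2(\R^{n+1}_+,y^{1-2s})$, which is the other half of the claim; the paper handles both issues at once by approximating $u$ by $u_k\in C_c^\infty(\R^n)$ in $H^s(\R^n)$, applying the smooth-data result to $u_k$, passing to the limit with Fatou's lemma to get \eqref{eq: global regularity}, and then extending \eqref{eq: neumann derivative} by the density argument of \cite[Proposition~4.3]{GLX}. To repair your plan, replace the polarization step with this two-stage argument: first establish \eqref{eq: neumann derivative} for smooth compactly supported data via the Stinga--Torrea theorem, then pass to general $u\in H^s(\R^n)$ by density and Fatou.
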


	\begin{proof}[Proof of Lemma \ref{Lemma: degenerate and decay}]
		We only provide the upper bound in \ref{equivalence of poisson kernels} as the lower bound works similarly. Using \eqref{eq: heat kernel comparable to classical} and change of variables we have
		\begin{equation}
			\label{eq: upper bound poisson kernel}
			\begin{split}
				P_y(x,z)&=c_s y^{2s}\int_0^{\infty}e^{-y^2/4t}p_t(x,z)\, \frac{dt}{t^{1+s}}\\
				&\leq Cy^{2s}\int_0^{\infty}e^{-\frac{y^2+\alpha_2|x-z|^2}{4t}}\, \frac{dt}{t^{n/2+1+s}}\\
				&= \underbrace{Cy^{2s}\int_0^{\infty}e^{-\frac{y^2+\alpha_2|x-z|^2}{4}\tau}\tau^{n/2+s-1}\, d\tau}_{\tau=1/t}\\
				&=\underbrace{C\frac{y^{2s}}{( y^2+\alpha_2|x-z|^2)^{\frac{n+2s}{2}}}\int_0^{\infty}e^{-\eta}\eta^{n/2+s-1}\, d\eta}_{\eta=\frac{y^2+\alpha_2|x-z|^2}{4}\tau}\\
				&=C\frac{y^{2s}}{( y^2+\alpha_2|x-z|^2)^{\frac{n+2s}{2}}}
			\end{split}
		\end{equation}
		for any $y>0$ and for some constant $C>0$ independent of the factors on both sides. In the above computation we used that the integral converges to the Gamma function $\Gamma(n/2+s)$. Taking $\alpha_2$ outside and rescaling gives the result. \\

		Next, we prove \ref{Lq estimate}. For this we may first observe that using \eqref{eq: comparison to poisson kernel of frac lap} we have
		\begin{equation}
			\label{eq: pointwise estimate}
			\left|\mathcal{P}_{\sigma}^s u\right|\leq \int_{\R^n}P_y(x,z)|u(z)|\,dz\leq C ( P^s_{y/\sqrt{\alpha_2}}\ast |u|) (x).
		\end{equation}
		Next, recall that by \cite[Lemma~7.1]{KRZ-2023} for any $s>0$ and $1\leq q<\infty$ there holds
		\begin{equation}
			\label{eq: lq estimate frac poisson}
			\left\|\widetilde{P}^s_y \right\|^q_{L^q(\R^n)}=C_s\frac{\omega_n}{2} y^{n(1-q)}B(n/2,n(q-1)/2+sq)
		\end{equation}
		for all $y>0$, where $\omega_n=\left|\partial B_1(0)\right|$ denotes the Lebesgue measure of $\p B_1(0)$,  and $B(x,y)$ is the Beta function\footnote{The constant $C_s>0$ is chosen in such a way that $\left\|\widetilde{P}^s_y \right\|_{L^1(\R^n)}=1$.}.
		Therefore, we can use Young's inequality and the previous estimate to obtain the bound \eqref{eq: Lq estimate}. The estimate \eqref{eq: lp loc estimate} follows from \eqref{eq: Lq estimate} and the same computation as in \cite[Lemma~7.2]{KRZ-2023}.\\

		Next, we turn to the proof of \ref{gradient estimate}. First, note that by \cite[Theorem 1.2]{CJKS20}, there holds
		\begin{equation}
			\label{eq: gradient estimate kernel}
			\left|\nabla_x p_t(x,z)\right|\leq c_3 t^{-\frac{n+1}{2}}e^{-\alpha_3 \frac{|x-z|^2}{4t}}
		\end{equation}
		for constants $c_3,\alpha_3>0$. Therefore, using Lebesgue's differentiation theorem and a similar computation as above we get
		
		\[
		\begin{split}
			\left|\nabla_x P_y(x,z)\right|&= c_s y^{2s}\left|\int_0^{\infty}e^{-y^2/4t}\nabla_x p_t(x,z)\, \frac{dt}{t^{1+s}} \right|\\
			&\leq C y^{2s}\int_0^{\infty}e^{-\frac{y^2+\alpha_3|x-z|^2}{4t}}\, \frac{dt}{t^{n/2+3/2+s}}\\
			&=\underbrace{C y^{2s}\int_0^{\infty}e^{-\frac{y^2+\alpha_3|x-z|^2}{4}\tau}\tau^{n/2+s-1/2}\, d\tau}_{\tau=1/t}\\
			&=\underbrace{C\frac{y^{2s}}{( y^2+\alpha_3|x-z|^2)^{\frac{n+2s+1}{2}}}\int_0^{\infty}e^{-\eta}\eta^{n/2+s}\, d\eta}_{\eta=\frac{y^2+\alpha_3|x-z|^2}{4}\tau}\\
			&= \frac{C}{y}\frac{y^{2(s+1/2)}}{( y^2+\alpha_3|x-z|^2)^{\frac{n+2(s+1/2)}{2}}}\\
			&=\frac{C}{y/\sqrt{\alpha_3}}\widetilde{P}^{s+1/2}_{y/\sqrt{\alpha_3}}(x-z).
		\end{split}
		\]
		Moreover, by Lebesgue's dominated convergence theorem we may calculate
		\[
		\begin{split}
			&\quad \, \partial_y P_y(x,z)\\
			&=c_s\partial_y \left( \int_0^{\infty}y^{2s}\,e^{-y^2/4t}p_t(x,z)\, \frac{dt}{t^{1+s}} \right) \\
			&= c_s\left(2s y^{2s-1}\int_0^{\infty}e^{-y^2/4t}p_t(x,z)\, \frac{dt}{t^{1+s}}-\frac{y^{2s+1}}{2}\int_{0}^{\infty}e^{-y^2/4t}p_t(x,z)\, \frac{dt}{t^{2+s}}\right).
		\end{split}
		\]
		Using \eqref{eq: heat kernel comparable to classical}, \eqref{eq: upper bound poisson kernel} and the same change of variables as above we get
		\[
		\begin{split}
			& \quad \, \left|\partial_y P_y(x,z)\right|\\
			&\leq C\left(y^{-1}P_y(x,z)+y^{2s+1}\int_0^{\infty}e^{-y^2/4t}p_t(x,z)\, \frac{dt}{t^{2+s}}\right)\\
			&\leq C\left(\frac{y^{2s-1}}{( y^2+\alpha_2|x-z|^2)^{\frac{n+2s}{2}}}+y^{2s+1}\int_0^{\infty}e^{-\frac{y^2+\alpha_2|x-z|^2}{4t}}\, \frac{dt}{t^{n/2+2+s}}\right)\\
			&=C\left(\frac{y^{2s-1}}{( y^2+\alpha_2|x-z|^2)^{\frac{n+2s}{2}}}+y^{2s+1}\int_0^{\infty}e^{-\frac{y^2+\alpha_2|x-z|^2}{4}\tau}\tau^{n/2+s}\, d\tau\right)\\
			&\leq C\left(\frac{y^{2s-1}}{( y^2+\alpha_2|x-z|^2)^{\frac{n+2s}{2}}}+\frac{y^{2s+1}}{( y^2+\alpha_2|x-z|^2)^{\frac{n+2(s+1)}{2}}}\right)\\
			&=\frac{C}{y}\left(\frac{y^{2s}}{( y^2+\alpha_2|x-z|^2)^{\frac{n+2s}{2}}}+\frac{y^{2(s+1)}}{( y^2+\alpha_2|x-z|^2)^{\frac{n+2(s+1)}{2}}}\right)\\
			&= \frac{C}{y/\sqrt{\alpha_2}}\left(\widetilde{P}^s_{y/\sqrt{\alpha_2}}(x-z)+\widetilde{P}_{y/\sqrt{\alpha_2}}^{s+1}(x-z)\right).
		\end{split}
		\]
		As in \eqref{eq: pointwise estimate} this implies
		\begin{equation}
			\label{eq: pointwise estimate gradient}
			\left|\nabla_x \mathcal{P}^s_{\sigma} u\right|\leq \frac{C}{y/\sqrt{\alpha_3}}\left(\widetilde{P}^{s+1/2}_{y/\sqrt{\alpha_3}}\ast |u|\right)
		\end{equation}
		and
		\begin{equation}
			\label{eq: pointwise estimate y derivative}
			\left|\partial_y \mathcal{P}^s_{\sigma} u \right|\leq \frac{C}{y/\sqrt{\alpha_2}}\left(\widetilde{P}^s_{y/\sqrt{\alpha_2}}+\widetilde{P}_{y/\sqrt{\alpha_2}}^{s+1}\right)\ast |u|.
		\end{equation}
		Therefore, by Young's inequality and \eqref{eq: lq estimate frac poisson} we may estimate
		\[
		\begin{split}
			&\quad \, \left\|\nabla_{x,y}\mathcal{P}^s_{\sigma} u (\cdot,y)\right\|_{L^r(\R^n)}\\
			&\leq \frac{C}{y}( \left\|\widetilde{P}^{s+1/2}_{y/\sqrt{\alpha_3}}\ast |u|\right\|_{L^r(\R^n)}+ \left\|\widetilde{P}^s_{y/\sqrt{\alpha_2}}\ast |u|\right\|_{L^r(\R^n)}+\left\|\widetilde{P}_{y/\sqrt{\alpha_2}}^{s+1}\ast |u|\right\|_{L^r(\R^n)}) \\
			&\leq Cy^{n(1-q)/q-1}\|u\|_{L^p(\R^n)},
		\end{split}
		\]
		which proves \ref{gradient estimate}. \\
		
		Now, we come to the proof of \ref{solution}. That $P^s_{y}(\cdot,z)\in L^2(\R^n)$ solves the PDE in \eqref{eq: extension problem} has been proven in \cite[Theorem~2.2]{ST10}. By standard result it then follows that $\mathcal{P}^s_{\sigma}(u)$ is a solution to the desired PDE. Furthermore, again by \cite[Theorem~2.2]{ST10} the boundary value is attained in the $L^p$ sense. \\

		Finally, we prove the assertion \ref{neumann derivative}. If $u\in C_c^{\infty}(\R^n)$, then we may conclude from \cite[Theorem~1.1]{ST10} that formula \eqref{eq: neumann derivative} holds in the $L^2$ sense. For the general case choose $u_k\in C_c^{\infty}(\R^n)$, $k\in\N$, such that $u_k\to u$ in $H^s(\R^n)$ as $k\to\infty$. Since $\mathcal{P}^s_{\sigma} u_k$ solves the extension problem \eqref{eq: extension problem}, using the previous $L^2$ convergence we get
		\[
		\begin{split}
			&\quad \, \int_{\R^{n+1}_+}y^{1-2s}\Sigma\nabla_{x,y}\mathcal{P}^s_{\sigma} u_k\cdot\nabla_{x,y}\mathcal{P}^s_{\sigma}u_k\,dxdy\\
			&=-\int_{\R^n\times \{y=0\}}\left(y^{1-2s}\partial_y \mathcal{P}^s_{\sigma} u_k\right) \mathcal{P}^s_{\sigma}u_k\,dx\\
			&=d_s\int_{\R^n}\left((-\Div(\sigma\nabla))^s u_k\right) u_k\,dx   \\
			&=d_s\left\langle (-\Div(\sigma\nabla))^s u_k,u_k \right\rangle_{H^{-s}(\R^n)\times H^s(\R^n)}.
		\end{split}
		\]
		Now, using the linearity of $u\mapsto \mathcal{P}^s_{\sigma} u$, \eqref{eq: gradient bound} with $r=p=2$ and Fatou's lemma we may estimate
		\[
		\begin{split}
			&\quad \, \int_{\R^{n+1}_+}y^{1-2s}\Sigma\nabla_{x,y}\mathcal{P}^s_{\sigma}u\cdot\nabla_{x,y}\mathcal{P}^s_{\sigma}u\,dxdy\\
			&\leq \liminf_{k\to\infty}\int_{\R^{n+1}_+}y^{1-2s}\Sigma\nabla_{x,y}\mathcal{P}^s_{\sigma}u_k\cdot\nabla_{x,y}\mathcal{P}^s_{\sigma}u_k\,dxdy\\
			&= d_s\liminf_{k\to\infty}\left\langle(-\Div(\sigma\nabla))^s u_k,u_k\right\rangle_{H^{-s}(\R^n)\times H^s(\R^n)}\\
			&=d_s\left\langle(-\Div(\sigma\nabla))^s u,u \right\rangle_{H^{-s}(\R^n)\times H^s(\R^n)}.
		\end{split}
		\]
		By the uniform ellipticity of $\sigma$, we directly get the desired estimate \eqref{eq: global regularity}. Finally, we can argue as in \cite[Proposition~4.3]{GLX} to conclude that \eqref{eq: neumann derivative} holds for general $u\in H^s(\R^n)$. This concludes the proof.
	\end{proof}

	\begin{remark}
		To make the formal computations \eqref{formal comp1} rigorous, we only need to check 
		\begin{equation}\label{decay as y to infty}
			\lim_{y\to \infty}y^{1-2s}\p_y v =0 \text{ in }\R^n,
		\end{equation} 
		where $v$ is the solution to the extension problem \eqref{equ: extension problem}. To this end, we can choose $r=\infty$, $p=q=2$ in \eqref{eq: gradient bound}, then the limit \eqref{decay as y to infty} holds automatically.
	\end{remark}

	\section{From nonlocal to local}\label{sec: nonlocal to local}
	
	In this section, we derive key ingredients in order to prove our main results. In Section~\ref{subsec: auxiliary lemmas} we prove a regularity result for $H^1_{\mathrm{loc}}$-solutions of
	\[
	-\Div(\sigma\nabla v)=d_s(-\Div (\sigma\nabla))^s u\text{ in }\R^n
	\]
	for $u\in H^s(\R^n)$ and review the local PDE solved by the functions $U_f$, which are defined via the equation \eqref{the function v} with $u=u_f$. Then in Section~\ref{subsec: new Runge} we provide via the geometric Hahn--Banach theorem our new Runge approximation, which allows to approximation solutions of $\Div(\sigma\nabla v)=0$ by the functions $U_f$.
	
	\subsection{Some auxiliary lemmas}
	\label{subsec: auxiliary lemmas}
	
	We begin with the aforementioned basic regularity result.
	
	\begin{lemma}[Regularity result]\label{lem: regularity}
		Let $\Omega\subset\R^n$ be a bounded domain, $0<s<1$, $\sigma\in C^{\infty}(\R^n)$ a uniformly elliptic diffusion coefficient with $\sigma|_{\Omega_e}=1$ and $u\in H^s(\R^n)$. Suppose that $v\in H^1_{\mathrm{loc}}(\R^n)$ is a solution to 
		\begin{equation}\label{equ: reg lemma}
			-\Div (\sigma \nabla v) =d_s(-\Div (\sigma\nabla))^s u \text{ in }\R^n,
		\end{equation}
		where $d_s$ is some constant depending only on $s\in (0,1)$. Then the Hessian $\nabla^2 v\in H^{-s}(\R^n;\R^{n \times n})$ satisfies 
		\begin{equation}
			\label{eq: estimate of hessian}
			\left\|\nabla^2 v\right\|_{H^{-s}(\R^n)}\leq C\|-\Delta v\|_{H^{-s}(\R^n)}\leq C( \|\nabla v\|_{L^2(\Omega)}+\|u\|_{H^s(\R^n)}) ,
		\end{equation}
		for some constant $C>0$ independent of $u$ and $v$, but depending on $\|\sigma\|_{C^{0,1}(\R^n)}$ . Moreover, we have $-\Div(\sigma\nabla v)\in H^{-s}(\R^n)$ 
		and equation \eqref{equ: reg lemma} holds in $H^{-s}(\R^n)$.
	\end{lemma}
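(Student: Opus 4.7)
The plan is to exploit the product rule $-\Div(\sigma\nabla v)=-\sigma\Delta v-\nabla\sigma\cdot\nabla v$ to reduce the question to a Calder\'on--Zygmund estimate for the ordinary Laplacian, combined with the mapping properties of $(-\Div(\sigma\nabla))^s$ and the hypothesis that $\sigma\equiv 1$ on $\Omega_e$. Solving this identity together with the PDE \eqref{equ: reg lemma} for $\Delta v$ yields, as distributions on $\R^n$,
\[
-\Delta v=\frac{1}{\sigma}\Bigl(d_s(-\Div(\sigma\nabla))^s u+\nabla\sigma\cdot\nabla v\Bigr),
\]
and I would estimate the right-hand side term by term in $H^{-s}(\R^n)$.

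For the first term, the kernel comparison \eqref{eq: ellipticity of op} applied to the bilinear form \eqref{eq: weak form} shows that $(-\Div(\sigma\nabla))^s\colon H^s(\R^n)\to H^{-s}(\R^n)$ is bounded, so $\|d_s(-\Div(\sigma\nabla))^s u\|_{H^{-s}(\R^n)}\lesssim\|u\|_{H^s(\R^n)}$. For the second term, the hypothesis $\sigma|_{\Omega_e}=1$ forces $\nabla\sigma$ to vanish on $\Omega_e$, so $\nabla\sigma\cdot\nabla v$ is supported in $\overline{\Omega}$ and belongs to $L^2(\R^n)$ with norm bounded by $\|\sigma\|_{C^{0,1}(\R^n)}\|\nabla v\|_{L^2(\Omega)}$; since $L^2(\R^n)\hookrightarrow H^{-s}(\R^n)$, the same bound transfers to $H^{-s}$. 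Finally, $1/\sigma$ is smooth, globally bounded with bounded derivatives, and equal to $1$ on $\Omega_e$, so multiplication by $1/\sigma$ is bounded on $H^s(\R^n)$ and, by duality, on $H^{-s}(\R^n)$. Combining these three ingredients yields the second inequality in \eqref{eq: estimate of hessian}, namely $\|{-\Delta v}\|_{H^{-s}(\R^n)}\le C(\|\nabla v\|_{L^2(\Omega)}+\|u\|_{H^s(\R^n)})$.

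For the first inequality in \eqref{eq: estimate of hessian}, I would appeal to the Calder\'on--Zygmund identity $\partial_i\partial_j = R_iR_j(-\Delta)$, where $R_i$ denotes the $i$-th Riesz transform, together with the boundedness of $R_iR_j$ on every Bessel-potential space $H^\alpha(\R^n)$. For $v\in H^1_{\mathrm{loc}}(\R^n)$ I would interpret $\partial_i\partial_j v\in H^{-s}(\R^n)$ as $R_iR_j(-\Delta v)$, which is legitimate since $-\Delta v\in H^{-s}(\R^n)$ from the previous step, and then verify via density of $C_c^\infty(\R^n)$ in $H^s(\R^n)$ and the (anti-)self-adjointness of the Riesz transforms that this coincides with the distributional second derivative when tested against compactly supported functions. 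Returning to the product rule then gives $-\Div(\sigma\nabla v)=-\sigma\Delta v-\nabla\sigma\cdot\nabla v\in H^{-s}(\R^n)$, the first term using that $\sigma$ is a bounded multiplier on $H^{-s}(\R^n)$, so \eqref{equ: reg lemma} holds in $H^{-s}(\R^n)$. The main obstacle is precisely this interpretive step: a priori $v$ need not be a tempered distribution, so the passage from $\Delta v\in H^{-s}$ to $\nabla^2 v\in H^{-s}$ cannot be carried out by a direct Fourier argument on $\widehat v$ and must instead be justified through the $R_iR_j$ identification and a density argument.
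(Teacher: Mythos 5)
Your proposal is correct and takes essentially the same route as the paper: there, too, the product rule gives $-\sigma\Delta v=\nabla\sigma\cdot\nabla v+d_s(-\Div(\sigma\nabla))^s u$, the right-hand side is bounded in $H^{-s}(\R^n)$ via the boundedness of $(-\Div(\sigma\nabla))^s\colon H^s(\R^n)\to H^{-s}(\R^n)$ and the fact that $\nabla\sigma$ is supported in $\overline{\Omega}$, the factor $\sigma^{-1}$ is handled by inserting test functions $\psi/\sigma$ (equivalent to your duality argument for the multiplier $1/\sigma$, both resting on the Lipschitz multiplication lemma), the Hessian estimate follows from the symbol $\xi_j\xi_k/|\xi|^2$ --- i.e.\ exactly your Riesz-transform identity $\partial_j\partial_k=R_jR_k(-\Delta)$ --- and the final statement that \eqref{equ: reg lemma} holds in $H^{-s}(\R^n)$ is obtained by density of $C_c^{\infty}(\R^n)$ in $H^s(\R^n)$. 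The interpretive subtlety you single out (identifying the distributional $\partial_j\partial_k v$ with $R_jR_k(-\Delta v)$ when $v$ is only in $H^1_{\mathrm{loc}}$) is in fact passed over silently in the paper, which simply writes $\widehat{\partial_{jk}v}=\frac{\xi_j\xi_k}{|\xi|^2}\widehat{-\Delta v}$, so on that step your write-up is, if anything, the more careful of the two.
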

	
	\begin{proof}
		By \eqref{equ: reg lemma}, let us first note that there holds
		\[
		-\sigma\Delta v=\nabla \sigma\cdot \nabla v+d_s(-\Div(\sigma\nabla))^su
		\]
		in $\distr(\R^n)$. Using that $(-\Div (\sigma\nabla))^s$ is a bounded linear operator from $H^s(\R^n)$ to $H^{-s}(\R^n)$ and $\sigma|_{\Omega_e}=1$, we deduce that there holds
		\[\begin{split}
			\left|\int_{\R^n}\nabla v\cdot\nabla (\sigma\varphi)\,dx\right|&\leq C( \|\nabla \sigma\|_{L^{\infty}(\Omega)}\|\nabla v\|_{L^2(\Omega)}\|\varphi\|_{L^2(\Omega)}+\|u\|_{H^s(\R^n)}\|\varphi\|_{H^s(\R^n)}) \\
			&\leq C( \|\nabla v\|_{L^2(\Omega)}\|\varphi\|_{L^2(\Omega)}+\|u\|_{H^s(\R^n)}\|\varphi\|_{H^s(\R^n)})
		\end{split}
		\]
		for all $\varphi\in C_c^{\infty}(\R^n)$. As $\sigma\in C^{\infty}(\R^n)$ is uniformly elliptic, we can replace $\varphi\in C_c^{\infty}(\R^n)$ by $\psi/\sigma\in C_c^{\infty}(\R^n)$ with $\psi\in C_c^{\infty}(\R^n)$ to get
		\[
		\begin{split}
			\left|\int_{\R^n}\nabla v\cdot\nabla \psi\,dx\right|&\leq C( \|\nabla v\|_{L^2(\Omega)}\|\psi\|_{L^2(\Omega)}+\|u\|_{H^s(\R^n)}\|\psi\|_{H^s(\R^n)}), 
		\end{split}
		\]
		for all $\psi \in C_c^{\infty}(\R^n)$. Here, we used the uniform ellipticity \eqref{ellipticity} of $\sigma$, $\sigma|_{\Omega_e}=1$ and the resulting Lipschitz continuity of $\sigma$ on $\R^n$, which implies that the multiplication map $H^s(\R^n)\ni v\mapsto \sigma^{-1} v\in H^s(\R^n)$ is bounded (see \cite[Lemma~5.3]{DNPV12}). More precisely, we used that the Lipschitz continuity and uniform ellipticity of $\sigma$ ensure that $\sigma^{-1}$ is Lipschitz continuous. This in turn follows from the simple estimate
		\[
		|\sigma^{-1}(x)-\sigma^{-1}(y)|=\left|\frac{\sigma(x)-\sigma(y)}{\sigma(x)\sigma(y)}\right|\leq C|\sigma(x)-\sigma(y)|\leq C[\sigma]_{C^{0,1}(\R^n)}|x-y|,
		\]
		where in the first inequality we used the uniform ellipticity of $\sigma$ and $[\sigma]_{C^{0,1}(\R^n)}$ denotes the usual Lipschitz seminorm.

		Thus, the distribution $-\Delta v$ satisfies
		\[
		|\langle -\Delta v,\varphi\rangle|\leq C( \|\nabla v\|_{L^2(\Omega)}+\|u\|_{H^s(\R^n)}) \|\varphi\|_{H^s(\R^n)}
		\]
		for all $\varphi\in C_c^{\infty}(\R^n)$ and hence continuously extends to an element in $H^{-s}(\R^n)$ with 
		\[
		\|-\Delta v\|_{H^{-s}(\R^n)}\leq C( \|\nabla v\|_{L^2(\Omega)}+\|u\|_{H^s(\R^n)}) .
		\]
		Note that
		\[
		\widehat{\partial_{jk}v}=\frac{\xi_j\xi_k}{|\xi|^2}\widehat{-\Delta v}
		\]
		for all $1\leq j,k\leq n$ and hence by Plancherel's theorem there holds
		\[
		\begin{split}
			\left\|\partial_{jk}v \right\|_{H^{-s}(\R^n)}&=C\left\|\langle \xi\rangle^{-s}\widehat{\partial_{jk}v}\right\|_{L^2(\R^n)}=C\left\|\langle \xi\rangle^{-s}\frac{\xi_j\xi_k}{|\xi|^2}\widehat{-\Delta v}\right\|_{L^2(\R^n)}\\
			&\leq C\left\|\langle \xi\rangle^{-s}\widehat{-\Delta v}\right\|_{L^2(\R^n)}=C\left\|-\Delta v\right\|_{H^{-s}(\R^n)}
		\end{split}
		\]
		for all $1\leq j,k\leq n$. Hence, we can conclude the proof of \eqref{eq: estimate of hessian}. Next, recall that we have
		\[
		-\Div(\sigma\nabla v)=-\sigma\Delta v-\chi_{\Omega}\nabla \sigma \cdot\nabla v
		\]
		in $\distr(\R^n)$, where we denote by $\chi_A$ its characteristic function for a set $A\subset \R^n$. Hence, we have
		\[
		\begin{split}
			\|-\Div(\sigma\nabla v)\|_{H^{-s}(\R^n)}&\leq \|-\sigma\Delta v\|_{H^{-s}(\R^n)}+\|\chi_{\Omega}\nabla \sigma \cdot\nabla v\|_{H^{-s}(\R^n)}\\
			&\leq C \|-\Delta v\|_{H^{-s}(\R^n)}+\|\nabla \sigma \cdot\nabla v\|_{L^2(\Omega)}\\
			&\leq C( \|\nabla v\|_{L^2(\Omega)}+\|u\|_{H^s(\R^n)}) .
		\end{split}
		\]
		Here we used again the Lipschitz continuity of $\sigma$ and \cite[Lemma~5.3]{DNPV12}. This shows that $-\Div(\sigma\nabla v)\in H^{-s}(\R^n)$. Finally, the fact that \eqref{equ: reg lemma} holds in $H^{-s}(\R^n)$ follows from the density of $C_c^{\infty}(\R^n)$ in $H^s(\R^n)$.

		Indeed, let $\varphi\in H^s(\R^n)$ and choose $\varphi_k\in C_c^{\infty}(\R^n)$ such that $\varphi_k\to \varphi$ in $H^s(\R^n)$. Then using $-\Div(\sigma\nabla v), (-\Div(\sigma\nabla))^su\in H^{-s}(\R^n)$, we obtain
		\[
		\begin{split}
			\left\langle -\Div(\sigma\nabla v),\varphi \right\rangle_{H^{-s}(\R^n)\times H^s(\R^n)}&=\lim_{k\to\infty}\langle -\Div(\sigma\nabla v),\varphi_k\rangle_{H^{-s}(\R^n)\times H^s(\R^n)}\\
			&=\lim_{k\to\infty}\int_{\R^n}\sigma\nabla v\cdot\nabla \varphi_k\,dx\\
			&=d_s\lim_{k\to\infty}\left\langle (-\Div(\sigma\nabla))^su,\varphi_k\right\rangle_{H^{-s}(\R^n)\times H^s(\R^n)}\\
			&=d_s\left\langle  (-\Div(\sigma\nabla))^su,\varphi\right\rangle_{H^{-s}(\R^n)\times H^s(\R^n)}.
		\end{split}
		\]
		This completes the proof.
	\end{proof}
	
	\begin{lemma}\label{Lem: C-S reduction}
		Let $\Omega\subset\R^n$ be a smoothly bounded domain and $0<s<1$. Assume that $\sigma$ is a smooth uniformly elliptic function with $\sigma|_{\Omega_e}=1$ and $0\leq q\in L^{\infty}(\Omega)$. Let $u_f\in H^s(\R^n)$ be the unique solution to \eqref{equ: main} with $f\in H^s(\R^n)$ and let $\mathcal{P}^s_{\sigma}u_f$ denote the Caffarelli-Silvestre type extension of $u_f$. Then
		\begin{equation}
			\label{eq: integrated version}
			U_f(x)\vcentcolon = \int_0^{\infty}y^{1-2s} \mathcal{P}^s_{\sigma}u_f(x,y) \,dy
		\end{equation}
		belongs to $H_{\mathrm{loc}}^1(\R^n)$. Moreover, the function $U_f$ solves 
		\begin{equation}\label{one key equ}
			-\Div (\sigma \nabla v ) = d_s (-\Div (\sigma \nabla))^s u_f \text{ in }\R^n
		\end{equation}
		and
		\begin{equation}\label{reduced equation}
			\begin{cases}
				\Div( \sigma \nabla v)=d_s q u_{f}  &\text{ in }\Omega, \\
				v =\left. U_f \right|_{\partial\Omega} &\text{ on }\p \Omega.
			\end{cases}
		\end{equation}
	\end{lemma}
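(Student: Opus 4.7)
The plan is to make the formal computations \eqref{formal comp1}--\eqref{formal comp2} of the introduction rigorous by exploiting Lemma~\ref{Lemma: degenerate and decay} together with Fubini's theorem and a cut-off procedure in the $y$-variable; once \eqref{one key equ} is established distributionally, Lemma~\ref{lem: regularity} upgrades it to an identity in $H^{-s}(\R^n)$, and \eqref{reduced equation} then follows at once by restriction to $\Omega$.

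First, I establish $U_f\in H^1_{\mathrm{loc}}(\R^n)$ by splitting $U_f=\int_0^1 y^{1-2s}\mathcal{P}^s_\sigma u_f\,dy+\int_1^\infty y^{1-2s}\mathcal{P}^s_\sigma u_f\,dy$. For the near-zero piece, Minkowski's inequality combined with the $L^2$-estimate \eqref{eq: Lq estimate} (choosing $p=r=2$, $q=1$) and the gradient bound \eqref{eq: gradient bound} yields $L^2$-bounds for the truncation and its $x$-derivatives, using that $y^{1-2s}$ is integrable near $0$. For the tail, differentiation under the integral is justified by the pointwise decays $\|\mathcal{P}^s_\sigma u_f(\cdot,y)\|_{L^\infty(\R^n)}\lesssim y^{-n/2}\|u_f\|_{L^2(\R^n)}$ and $\|\nabla_{x,y}\mathcal{P}^s_\sigma u_f(\cdot,y)\|_{L^\infty(\R^n)}\lesssim y^{-n/2-1}\|u_f\|_{L^2(\R^n)}$, obtained from \eqref{eq: Lq estimate}--\eqref{eq: gradient bound} with $p=2$, $r=\infty$, $q=2$; the resulting $y$-integrals then converge on any bounded set of $x$ and control both $U_f$ and $\nabla U_f$ locally.

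For the PDE \eqref{one key equ}, I insert into the weak form of the extension problem \eqref{eq: extension problem} the test function $\psi_R(x,y)=\varphi(x)\eta(y/R)$, where $\varphi\in C_c^\infty(\R^n)$ and $\eta\in C^\infty([0,\infty))$ satisfies $\eta(0)=1$, $\eta(t)=0$ for $t\geq 2$ with $\eta'$ supported in $[1,2]$. Using the trace identity \eqref{eq: neumann derivative} at $y=0$ yields
\[
\int_{\R^{n+1}_+}\!\!y^{1-2s}\sigma\nabla_x\mathcal{P}^s_\sigma u_f\cdot\nabla\varphi\,\eta(y/R)\,dxdy + R^{-1}\!\!\int_{\R^{n+1}_+}\!\!y^{1-2s}\partial_y\mathcal{P}^s_\sigma u_f\,\varphi\,\eta'(y/R)\,dxdy = d_s\bigl\langle (-\Div(\sigma\nabla))^s u_f,\varphi\bigr\rangle.
\]
The second integral is $O(R^{-s})$: by Cauchy--Schwarz it is dominated by $R^{-1}\|\varphi\|_{L^\infty}\|\eta'\|_{L^\infty}|\supp\varphi|^{1/2}\bigl(\int_R^{2R}y^{1-2s}\,dy\bigr)^{1/2}\lesssim R^{-s}$ times the finite global energy \eqref{eq: global regularity}, so it vanishes as $R\to\infty$. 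The first integral converges by dominated convergence with dominating bound again provided by \eqref{eq: global regularity}. A Fubini argument, justified by the absolute integrability established in the regularity step, identifies the limit with $\int_{\R^n}\sigma\nabla U_f\cdot\nabla\varphi\,dx$; this proves \eqref{one key equ} distributionally, and Lemma~\ref{lem: regularity} then promotes it to $H^{-s}(\R^n)$.

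Finally, \eqref{reduced equation} is immediate: substituting the identity $(-\Div(\sigma\nabla))^s u_f=-qu_f$ in $\Omega$, valid by the PDE in \eqref{equ: main}, into \eqref{one key equ} gives $\Div(\sigma\nabla U_f)=d_squ_f$ in $\Omega$, while the Dirichlet datum $v=U_f|_{\partial\Omega}$ is tautological. The main technical obstacle is the rigorous handling of the $y\to\infty$ behaviour, namely the vanishing of the cut-off remainder and the legitimacy of the Fubini interchange; both rest entirely on the quantitative decay estimates \eqref{eq: Lq estimate}--\eqref{eq: gradient bound} of Lemma~\ref{Lemma: degenerate and decay}.
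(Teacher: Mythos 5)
Your route is genuinely different from the paper's: the paper disposes of this lemma by citation, taking $U_f\in H^1_{\mathrm{loc}}(\R^n)$ and equation \eqref{one key equ} directly from \cite[Proposition~6.1, Theorem~3]{CGRU2023reduction} and then reading off \eqref{reduced equation} from \eqref{equ: main}, whereas you try to reprove the reduction from Lemma~\ref{Lemma: degenerate and decay}. Your cutoff argument in $y$ for \eqref{one key equ} (testing with $\varphi(x)\eta(y/R)$, using the Neumann trace \eqref{eq: neumann derivative} and killing the remainder by Cauchy--Schwarz against the finite weighted energy \eqref{eq: global regularity}) is sound, and the final passage to \eqref{reduced equation} is indeed immediate. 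The gap sits in the first step, the claim $U_f\in H^1_{\mathrm{loc}}(\R^n)$, on which everything else (including the appeal to Lemma~\ref{lem: regularity}) depends.

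Concretely, for the near-zero piece you invoke \eqref{eq: gradient bound} with $p=r=2$, which only gives $\|\nabla_{x}\mathcal{P}^s_\sigma u_f(\cdot,y)\|_{L^2(\R^n)}\lesssim y^{-1}\|u_f\|_{L^2(\R^n)}$; after Minkowski the relevant integral is $\int_0^1 y^{1-2s}\,y^{-1}\,dy=\int_0^1 y^{-2s}\,dy$, which diverges for every $s\geq 1/2$, so your asserted $L^2$-bound for the $x$-derivatives of the truncation is unsupported on half of the parameter range. The correct tool here is not the slicewise bound but the weighted energy estimate \eqref{eq: global regularity} combined with Cauchy--Schwarz in $y$ against the weight, e.g.
\begin{equation}
\Big|\int_0^1 y^{1-2s}\nabla_x\mathcal{P}^s_\sigma u_f\,dy\Big|^2\leq\Big(\int_0^1 y^{1-2s}\,dy\Big)\int_0^1 y^{1-2s}\big|\nabla_x\mathcal{P}^s_\sigma u_f\big|^2\,dy,
\end{equation}
integrated over a compact set in $x$. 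For the tail you use only the $L^2$-based decays $\|\mathcal{P}^s_\sigma u_f(\cdot,y)\|_{L^\infty}\lesssim y^{-n/2}$ and $\|\nabla_{x,y}\mathcal{P}^s_\sigma u_f(\cdot,y)\|_{L^\infty}\lesssim y^{-n/2-1}$; then $\int_1^\infty y^{1-2s}y^{-n/2}\,dy$ converges only when $n+4s>4$, which fails for instance for $n=2$, $s\leq 1/2$ and $n=3$, $s\leq 1/4$, so ``the resulting $y$-integrals then converge'' is not justified in the full range $0<s<1$. To close this one needs sharper decay in $y$, e.g.\ exploiting that in the situation of interest $u_f-f$ is supported in $\overline{\Omega}$ and the exterior datum is compactly supported, so that $u_f\in L^1(\R^n)\cap L^2(\R^n)$ and \eqref{eq: Lq estimate} with $p=1$ yields the decay $y^{-n}$; this finer bookkeeping (and the same absolute-integrability input is also what legitimizes your final Fubini interchange) is precisely the content of the analysis in \cite{CGRU2023reduction} that the paper chooses to cite rather than redo.
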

	
	\begin{proof}

		By \cite[Proposition~6.1]{CGRU2023reduction}, it is known that $U_f\in H^1_{\mathrm{loc}}(\R^n)$ and by \cite[Theorem~3]{CGRU2023reduction} that it then solves \eqref{one key equ}. Now, by using the nonlocal equation \eqref{equ: main}, the equation \eqref{reduced equation} holds true as we want.	
	\end{proof}

	\begin{remark}
		Let $f_1, f_2\in C^\infty_c(W)$ be arbitrary exterior data, $u^{(j)}_{f_\ell}\in H^s(\R^n)$ be the solution to \eqref{equ: main j=12}, for $j=1,2$. Consider the Caffarelli-Silvestre type extension $\mathcal{P}^s_{\sigma_j}(u_{f_{\ell}}^{(j)})$
		for $j,\ell \in \{1,2\}$.
		By Lemma \ref{Lem: C-S reduction}, then the function
		\begin{equation}\label{the function v_j}
			U^{(j)}_{f_\ell}(x):=\int_0^\infty y^{1-2s}\mathcal{P}^s_{\sigma_j} u_{f_\ell}^{(j)}(x,y) \, dy
		\end{equation}
		solves the equation
		\begin{equation}\label{reduced local-nonlocal}
			-\Div (  \sigma_j \nabla U^{(j)}_{f_\ell} ) =d_s ( -\Div (  \sigma_j \nabla )  )^s u_{f_\ell}^{(j)} \text{ in }\R^n,
		\end{equation}
		for $j,\ell \in \{1,2\}$. Let us point out that the superscript $(j)$ corresponds to have conductivity $\sigma_j$ and potential $q_j$ in the extension problem and the nonlocal PDE, respectively, the subscript $f_{\ell}$ to the exterior data in the nonlocal PDE.
		In particular, as $j=\ell \in \{1,2\}$, $U^{(j)}_{f_j}$ satisfies
		\begin{equation}\label{reduced equation j=12}
			\begin{cases}
				\Div ( \sigma_j \nabla v)  =d_s q_j u_{f_j}^{(j)}  &\text{ in }\Omega, \\
				v =\left. U^{(j)}_{f_j} \right|_{\p \Omega} &\text{ on }\p \Omega
			\end{cases}
		\end{equation}
		in the $H^{-s}(\Omega)$ sense (see Lemma~\ref{lem: regularity}).
		
	\end{remark}

	\subsection{A new Runge approximation}
	\label{subsec: new Runge}

	With the preceding analysis, we want to prove our new Runge type approximation result (Proposition~\ref{prop: density}).
	
	We first give a formal proof for the case $s=1/2$ and $\sigma=1$.
	
	\begin{proof}[A formal proof of Proposition \ref{prop: density}]
		To demonstrate our idea, let us consider the case $s=1/2$ and $\sigma=1$. 
		Suppose \eqref{equ: Runge error} does not hold, i.e., there is a $v\in S$ and $\eps>0$ such that 
		\begin{equation}
			\label{eq: contradiction assumption}
			\left\| U_f - v \right\|_{H^1(\Omega)}\geq \eps >0, \text{ for all }f\in C^\infty_c(W).
		\end{equation}
		By the above inequality, $v\not\in \mathcal{D}$ and in particular $v\in S$ cannot be zero, otherwise it leads to a contradiction. Let 
		\begin{equation}
			A\vcentcolon=\overline{\mathcal{D}}^{H^1(\Omega)} \text{ and } B\vcentcolon= \{v\},
		\end{equation} 
		by the above condition, we have $A\cap B =\emptyset$, where $A$ and $B$ are closed convex sets in $H^1(\Omega)$ with $B$ being compact. In fact, for any $U\in A$, there exists $f\in C_c^{\infty}(W)$ such that 
		\[
		\|U-U_f\|_{H^1(\Omega)}<\eps/2.
		\]
		But then \eqref{eq: contradiction assumption} implies
		\[
		\|U-v\|_{H^1(\Omega)}\geq \|U_f-v\|_{H^1(\Omega)}-\|U-U_f\|_{H^1(\Omega)}\geq \eps/2>0.
		\]
		As this holds for any $U\in A$, we can deduce that $v\notin A$ and hence $A\cap B=\emptyset$.
		
		Now, by the geometric form of the Hahn-Banach theorem (for example, see \cite[Theorem 1.7]{Brezis}), there exists a (continuous) linear functional $\varphi\in (H^1(\Omega))^\ast=\wt H^{-1}(\Omega)$ (dual space of $H^1(\Omega)$) and $\alpha\in \R$ such that 
		\begin{equation}\label{equ: contradiction}
			\varphi ( U ) < \alpha < \varphi(v), \text{ for all }U\in A.
		\end{equation}
		Note that we necessarily have $\alpha>0$ as $f=0$ corresponds to $U_f=0$.
		The condition \eqref{equ: contradiction} especially implies that $\varphi (U_f)=0$ for all $f \in C^\infty_c(W)$. To see this assume that $\varphi( U_f) \neq 0$ for some $f\in C_c^{\infty}(W)$. As the mappings $f\mapsto U_f$ and $\varphi$ are linear, one can replace $f$ by $\mu f \in C^\infty_c(W) $ to obtain $\mu \varphi(U_f)\neq 0$ for all $0\neq \mu\in\R$. Hence, by choosing a suitable $\mu \in \R$ one gets a contradiction to \eqref{equ: contradiction}.
		
		Next, we aim to show that 
		\begin{equation}
			\varphi ( U_f ) =0 \text{ for all }f \in C^\infty_c(W) \implies \varphi(v)=0.
		\end{equation}
		To this end, let us consider the adjoint problem 
		\begin{equation}\label{adjoint s=1/2}
			\begin{cases}
				\Delta_{x,y} w = \varphi &\text{ in }\R^{n+1}_+ \\
				w(x,0)=0 &\text{ on } \Omega_e \times \{0\}, \\
				\displaystyle -\lim_{y\to0}\p_y w (x,y)+ q(x)w(x,0) =0  &\text{ on }\Omega \times \{0\}.
			\end{cases}
		\end{equation}
		For simplicity, let us set $\wt u_f (x,y) \vcentcolon=\mathcal{P}^{1/2}_{1}u_f (x,y)$ in the following derivation.
		By the equation \eqref{adjoint s=1/2}, direct computations show that 
		\begin{equation}
			\begin{split}
				\displaystyle0&=\varphi( U_f ) =\left\langle  \varphi ,  \int_0^\infty \wt u_f \,  dy\right\rangle_{\wt H^{-1}(\Omega)\times H^1(\Omega)}\\
				&= \int_{\R^{n+1}_+ }  (  \Delta_{x,y}w  ) \wt u_f \, dydx \\
				&= -\int_{\R^n} \wt u_f \lim_{y\to 0} \p_y w \, dx -\int_{\R^{n+1}_+} \nabla_{x,y}w \cdot \nabla_{x,y}\wt u_f \, dydx \\
				&=\underbrace{ -\int_{\Omega} \wt u_f \lim_{y\to 0} \p _yw \, dx}_{-\p_yw +qw=0 \text{ on }\Omega\times\{0\}}  - \int_{\Omega_e} \wt u_f \lim_{y\to 0}\p _yw \, dx + \int_{\R^n }w \lim_{y\to 0}\p_y \wt u_f \, dx \\
				&=-\int_{\Omega} u_f q w \, dx -\int_{W} f \lim_{y\to 0}\p_y w\, dx - \int_{\Omega}w (-\Delta)^{1/2} u_f \, dx \\
				&=-\int_{W} f \lim_{y\to 0}\p_y w\, dx ,
			\end{split}
		\end{equation}
		where we used $\LC  (-\Delta)^{1/2}+q \RC  u_f =0$ in $\Omega$ in the last equality, and recall that $\wt u_f$ satisfies
		\[
		\begin{cases}
			\Delta_{x,y}\widetilde{u}_f   =0 &\text{ in }\R^{n+1}_+,\\
			\widetilde{u}_f (x,0)=u_f(x) &\text{ on }\R^n
		\end{cases}
		\]
		with 
		\[
		-\lim_{y\to 0}\partial_y \widetilde{u}_f= (-\Delta)^{1/2} u_f\text{ in }\R^n.
		\]
		Here we used the fact that $d_{s}=1$ as $s=1/2$. Due to the arbitrariness of $f\in C^\infty_c(W)$, there must hold $\p_yw(x,0)=0$ in $W$. Moreover, the weak UCP for $\Delta_{x,y}$ and $\supp(\varphi)\subset \overline{\Omega}$ imply that $w=0$ in $\Omega_e\times (0,\infty)$. From this we infer that 
		\begin{equation}\label{w=p w =0}
		    w=0 \quad \text{and} \quad \p_\nu w =0 \text{ on } \p \Omega\times (0,\infty),
		\end{equation}
        where $\nu$ denotes the unit outwards pointing normal vector field of $\partial\Omega$. The first assertion of \eqref{w=p w =0} is clear, but let us give for the second assertion of \eqref{w=p w =0} with a more detailed argument. In the following we denote by $n$ the inwards pointing normal vector field on $\partial\Omega$ (i.e., $n=-\nu$ on $\p \Omega$). Then we define $\left.\partial_\nu w\right|_{\partial\Omega\times (0,\infty)}$ and $\left. \partial_n w\right|_{\partial\Omega\times (0,\infty)}$ weakly as
        \begin{equation}
        \label{eq: weak normal derivatives}
            \begin{split}
                \int_{\partial\Omega\times (0,\infty)}\partial_\nu w \rho\psi\,d\mathcal{H}^{n-1}&\vcentcolon = \left\langle \varphi,\int_0^{\infty}\rho \Psi\,dy\right\rangle_{\widetilde{H}^{-1}(\Omega)\times H^1(\Omega)}\\
                &\quad \, +\int_{\Omega\times (0,\infty)}\nabla_{x,y} w\cdot \nabla_{x,y} (\rho\Psi)\,dxdy,\\
                \int_{\partial\Omega\times (0,\infty)}\partial_n w \rho\psi\,d\mathcal{H}^{n-1}&\vcentcolon =\int_{\Omega_e\times (0,\infty)}\nabla_{x,y} w\cdot \nabla_{x,y} (\rho\Psi)\,dxdy,
            \end{split}
        \end{equation}
        where $\rho\in C_c^{\infty}((0,\infty))$, $\psi\in H^{1/2}(\partial\Omega)$ and $\Psi$ is in the first case any $H^1(\Omega)$ extension of $\psi$ and in the second case and $H^1(\Omega_e)$ extension of $\psi$ vanishing outside some sufficiently large compact set. These definitions correspond to the usual normal derivatives in the smooth case as can be seen by an integration by parts argument from the PDE \eqref{adjoint s=1/2}. Let us observe that the normal derivatives defined through \eqref{eq: weak normal derivatives} are independent of the extension $\Psi$ of $\psi\in H^{1/2}(\partial\Omega)$. 
        
        We demonstrate it for the first case. Suppose $\Psi,\Psi'\in H^1(\Omega)$ are two extensions of $\psi\in H^{1/2}(\partial\Omega)$ and $\rho\in C_c^{\infty}((0,\infty))$. Then the right hand side of the weak formulation of $\left.\partial_\nu w \right|_{\p \Omega\times (0,\infty)}$ coincide for $\Psi$ and $\Psi'$ if and only if 
        \[
            \left\langle \varphi,\int_0^{\infty}\rho (\Psi-\Psi')\,dy\right\rangle_{\widetilde{H}^{-1}(\Omega)\times H^1(\Omega)}+\int_{\R^{n+1}_+}\nabla_{x,y} w\cdot \nabla_{x,y} (\rho(\Psi-\Psi'))\,dxdy=0.
        \]
        As $w$ solves \eqref{adjoint s=1/2} and $\rho(\Psi-\Psi')\in H^1_{c,0}(\overline{\R^{n+1}_+})$, we see that the above identity holds (cf.~\eqref{eq: def of various function spaces} and \eqref{eq: weak formulation adjoint problem}) and so the normal derivative $\partial_\nu w$ is well-defined.
        Next, we show that
        \begin{equation}
        \label{eq: identity normal derivatives}
            \int_{\partial\Omega\times (0,\infty)}\partial_\nu w \rho\psi\,d\mathcal{H}^{n-1}=-\int_{\partial\Omega\times (0,\infty)}\partial_n w \rho\psi\,d\mathcal{H}^{n-1}
        \end{equation}
        for all $\rho\in C_c^{\infty}((0,\infty))$ and $\psi\in H^{1/2}(\partial\Omega)$. To see this fix some $\Psi\in H^1(\R^n)$ with $\Psi|_{\partial\Omega}=\psi$ in $H^{1/2}(\partial\Omega)$ and compact support. This can be done by the classical Sobolev trace and the extension theorem (cf.~\cite[Section~5.4]{EvansPDE}). Then we may compute
        \[
        \begin{split}
            &\quad \, \int_{\partial\Omega\times (0,\infty)}\partial_\nu w \rho\psi\,d\mathcal{H}^{n-1}\\
           & =\left\langle \varphi,\int_0^{\infty}\rho \Psi\,dy\right\rangle_{\widetilde{H}^{-1}(\Omega)\times H^1(\Omega)}+\int_{\R^{n+1}_+}\nabla_{x,y} w\cdot \nabla_{x,y} (\rho\Psi)\,dxdy\\
                &\quad \, - \int_{\Omega_e\times (0,\infty)}\nabla_{x,y} w\cdot \nabla_{x,y} (\rho\Psi)\,dxdy\\
                &=-\int_{\partial\Omega\times (0,\infty)}\partial_n w \rho\psi\,d\mathcal{H}^{n-1},
        \end{split}
        \]
        where the terms in the second line again vanished as $w$ solves \eqref{adjoint s=1/2} and $\rho\Psi\in H^1_{c,0}(\overline{\R^{n+1}_+})$. Now, by \eqref{eq: weak normal derivatives}, \eqref{eq: identity normal derivatives},  and as $w=0$ in $\Omega_e\times (0,\infty)$, we deduce that 
        \[
            \int_{\partial\Omega\times (0,\infty)}\partial_\nu w \rho\psi\,d\mathcal{H}^{n-1}=0
        \]
        for all $\rho\in C_c^{\infty}((0,\infty))$ and $\psi\in H^{1/2}(\partial\Omega)$. Hence, we have
        \[
        \partial_\nu w|_{\partial\Omega\times (0,\infty)}=0
        \]
        in the aforementioned weak sense as asserted.
        \\

		In the next step, we want to show that $\varphi(v)=0$. To get rid of boundary contributions on the set $\Omega\times \{0\}$, let us choose $\beta_1\in C^\infty_c((0,\infty))$ with $\supp ( \beta_1 ) \subset (1,2)$, $\beta_1 \geq 0$ and $\int_0^\infty\beta_1 \, dy=1$. Now, consider 
		$$
		\beta_k (y) \vcentcolon= \frac{1}{k}\beta_1(y/k) \text{ for } k\in \N.
		$$
		By a change of variables one can easily see that $\int_0^\infty \beta_k\, dy=1$ for all $k\in \N$.  Similarly as in the previous computations, one has  
		
		\begin{equation}
			\begin{split}
				-\varphi(v) &= - \varphi \left( v \int_0^\infty \beta_k \, dy  \right)   \\ 
				&=-\int_{\Omega\times (0,\infty)}(\Delta_{x,y}w)\beta_k v\,dydx\\
				&= -\underbrace{\int_{\p \Omega \times (0,\infty)} \p_\nu  w v \beta_k \, dyd\mathcal{H}^{n-1} }_{=0, \text{ since }\p_\nu w=0 \text{ on }\p \Omega\times(0,\infty)}+ \underbrace{\int_{\Omega\times \{0\}}\p_y w v \beta_k  \, dx }_{=0,\text{ since }\beta_k(0)=0}\\
				&\quad \, + \int_{\Omega\times (0,\infty)} \nabla_{x,y}w \cdot \nabla_{x,y}( v \beta_k ) dydx \\
				&= \int_{\Omega} \nabla v \cdot \nabla \left( \int_0^\infty w\beta_k \, dy\right) dx +\int_{\Omega\times (0,\infty)} v \p_y w \p_y\beta_k \, dydx \\
				&= \underbrace{\int_{\p \Omega} \p_\nu v \left( \int_0^\infty w\beta_k \, dy\right) d\mathcal{H}^{n-1}}_{=0, \text{ since }w=0 \text{ on }\p \Omega \times (0,\infty)} -\underbrace{\int_{\Omega} \Delta v\left( \int_0^\infty w\beta_k \, dy\right) dx}_{=0, \text{ since }\Delta v=0 \text{ in }\Omega} \\
				&\quad \,  +\int_{\Omega\times (0,\infty)} v \p_y w \p_y\beta_k \, dydx,
			\end{split}
		\end{equation}
		for all $k\in \N$. Hence, take $k\to \infty$, we have 
		\begin{equation}
			-\varphi(v)=\lim_{k\to \infty}\int_{\Omega\times (0,\infty)} v \p_y w \p_y\beta_k \, dydx.
		\end{equation}
		Our final aim is to prove that the above limit is zero. As claimed in \cite[Section 3.1]{CGRU2023reduction}, we can obtain 
		\begin{equation}
			\lim_{k\to \infty}\int_{\Omega\times (0,\infty)} v \p_y w \p_y\beta_k \, dydx= \lim_{k\to \infty}\left( k^{-2}\int_{\Omega\times (k,2k)} v \p_y w \p_y\beta_1 \, dydx \right) =0,
		\end{equation}
		which shows $\varphi(v)=0$ as desired. However, this contradicts the condition \eqref{equ: contradiction} so that $v\in S$ as asserted cannot exist. Finally, by the trace theorem, it is easy to see that any $H^{1/2}(\p \Omega)$ function can be approximated by a sequence of functions in $\mathcal{D}'$ as well. 
	\end{proof}
	
	The above derivation demonstrates the idea to show our new Runge approximation. Next, based on this approach, we want to prove the general case rigorously, that is in which $\sigma$ may not be 1 in $\Omega$ and $s\neq 1/2$. One can see from the formal proof that the adjoint equation \eqref{adjoint s=1/2} plays an essential role in this argument. Therefore, let us first study the existence of a solution to this problem. To this end, let us introduce the following subspaces of the homogeneous and non-homogeneous weighted Sobolev spaces:
	\begin{equation}
    \label{eq: def of various function spaces}
		\begin{split}
			\dot H^1_{0}(\R^{n+1}_+,y^{1-2s})&\vcentcolon = \bigg\{  g\in \dot H^1(\R^{n+1}_+,y^{1-2s}): \, g=0 \text{ on }\Omega_e \times \{0\} \bigg\},\\
			\dot H^1_{\mathrm{loc},0}(\overline{\R^{n+1}_+},y^{1-2s})&\vcentcolon = \bigg\{  g\in \dot H^1_{\mathrm{loc}}(\overline{\R^{n+1}_+},y^{1-2s}): \, g=0 \text{ on }\Omega_e \times \{0\} \bigg\},\\
			H^1_{c}(\overline{\R^{n+1}_+}, y^{1-2s})&\vcentcolon = \bigg\{  g\in H^1(\R^{n+1}_+,y^{1-2s}): \, g\text{ has compact support in }\overline{\R^{n+1}_+} \bigg\},\\
			H^1_{c,0}(\overline{\R^{n+1}_+}, y^{1-2s})&\vcentcolon = \bigg\{  g\in H^1_c(\overline{\R^{n+1}_+},y^{1-2s}): \, g=0 \text{ on }\Omega_e \times \{0\} \bigg\}.
		\end{split}
	\end{equation}
	
	\begin{lemma}[Solvability of the adjoint problem]\label{Lem: solvable}
		Let $\Omega\subset\R^n$ be a bounded Lipschitz domain and $0<s<1$. Assume that $\Sigma$ is given by \eqref{Sigma(x)} such that $\sigma\in C^\infty(\R^n)$ is uniformly elliptic with $\sigma=1$ in $\Omega_e$. Let $0\leq q\in L^\infty(\Omega)$, and $\varphi \in \wt H^{-1}(\Omega)$. Consider 
		\begin{equation}\label{adjoint s in 0,1}
			\begin{cases}
				\Div_{x,y} ( y^{1-2s}\Sigma \nabla_{x,y} w ) =y^{1-2s}\varphi  &\text{ in }\R^{n+1}_+, \\
				w=0  &\text{ on }\Omega_e \times \{0\}, \\
				\displaystyle -\lim_{y\to 0}y^{1-2s}\p_yw + d_s qw =0 &\text{ on }\Omega\times \{0\}.
			\end{cases}
		\end{equation}
		Then the problem \eqref{adjoint s in 0,1} is solvable in $\dot H^1_{\mathrm{loc},0}(\overline{\R^{n+1}_+}, y^{1-2s})$ in the sense that 
		\begin{equation}
			\label{eq: weak formulation adjoint problem}
			\begin{split}
				&\quad \, \int_{\R^{n+1}_+} y^{1-2s}\Sigma \nabla_{x,y}w \cdot \nabla_{x,y} \psi  \, dxdy+d_s \int_{\Omega\times \{0\}} q w \psi \, dx \\
				&= -\left\langle \varphi , \int_0^\infty y^{1-2s}\psi (\cdot,y)\, dy \right\rangle_{\wt H^{-1}(\Omega)\times H^1(\Omega)},
			\end{split}
		\end{equation}
		for any $\psi \in H^1_{c,0}(\overline{\R^{n+1}_+}, y^{1-2s})$.
	\end{lemma}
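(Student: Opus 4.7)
The plan is to apply the Lax--Milgram theorem on the Hilbert space $V := \dot H^1_0(\R^{n+1}_+, y^{1-2s})$ equipped with the Dirichlet-form inner product
\[
(u,v)_V := \int_{\R^{n+1}_+} y^{1-2s}\nabla_{x,y}u\cdot\nabla_{x,y}v\,dxdy.
\]
By the standard weighted trace and Poincar\'e theory underpinning the Caffarelli--Silvestre extension, $V$ is a Hilbert space, the trace map sends $V$ continuously onto $\widetilde{H}^s(\Omega)$, and one has the inclusions $H^1_{c,0}(\overline{\R^{n+1}_+},y^{1-2s})\subset V\subset \dot H^1_{\mathrm{loc},0}(\overline{\R^{n+1}_+},y^{1-2s})$. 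Hence any $w$ produced in $V$ will automatically lie in the right space, and all test functions appearing in \eqref{eq: weak formulation adjoint problem} will be admissible.

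The bilinear form on the left of \eqref{eq: weak formulation adjoint problem},
\[
a(w,\psi):=\int_{\R^{n+1}_+}y^{1-2s}\Sigma\nabla_{x,y}w\cdot\nabla_{x,y}\psi\,dxdy+d_s\int_{\Omega} qw\psi\,dx,
\]
is bounded on $V\times V$ using the $L^\infty$-bounds on $\Sigma$ and $q$ together with the weighted trace theorem controlling $\|w|_{y=0}\|_{L^2(\Omega)}$ by $\|w\|_V$, and it is coercive with $a(w,w)\geq \lambda\|w\|_V^2$ thanks to the uniform ellipticity $\Sigma\geq \lambda I$ and the nonnegativity of $q$.

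The nontrivial step is to verify that the linear functional
\[
L(\psi):=-\left\langle\varphi,\int_0^\infty y^{1-2s}\psi(\cdot,y)\,dy\right\rangle_{\widetilde{H}^{-1}(\Omega)\times H^1(\Omega)}
\]
extends continuously to $V$. This reduces to the mapping property $\psi\mapsto \Psi|_\Omega\in H^1(\Omega)$, where $\Psi(x):=\int_0^\infty y^{1-2s}\psi(x,y)\,dy$, with a bound $\|\Psi\|_{H^1(\Omega)}\leq C\|\psi\|_V$. This is the main obstacle: since $y^{1-2s}$ is not integrable on $(0,\infty)$, a naive Cauchy--Schwarz in the $y$-variable fails, and one must exploit the implicit decay at infinity of $V$-functions through a weighted Sobolev--Hardy embedding, in the spirit of \cite[Proposition~6.1]{CGRU2023reduction} but extended from Caffarelli--Silvestre extensions to arbitrary members of $V$.

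Once continuity of $L$ is established with $\|L\|_{V^*}\leq C\|\varphi\|_{\widetilde H^{-1}(\Omega)}$, Lax--Milgram produces a unique $w\in V$ satisfying $a(w,\psi)=L(\psi)$ for all $\psi\in V$, and in particular for all $\psi\in H^1_{c,0}\subset V$, which yields precisely \eqref{eq: weak formulation adjoint problem}. If the global $V$-estimate proves too delicate, a robust alternative is to first solve the truncated problem on bounded cylinders $Q_R=B_R^n\times(0,R)$ with homogeneous Dirichlet data on the outer lateral boundary, where the corresponding $L_R$ is bounded on $V_R$ with an $R$-dependent constant via the elementary Cauchy--Schwarz bound $\|\Psi_R\|_{H^1(\Omega)}\lesssim R^{1-s}\|\psi\|_{V_R}$, and then extract a weak limit in $\dot H^1_{\mathrm{loc},0}$ via a Caccioppoli-type cutoff estimate giving uniform bounds on $w_R$ on compact subsets of $\overline{\R^{n+1}_+}$ as $R\to\infty$.
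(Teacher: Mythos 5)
Your central step fails. The linear functional $L(\psi)=-\langle\varphi,\int_0^\infty y^{1-2s}\psi(\cdot,y)\,dy\rangle$ is not bounded on $V=\dot H^1_0(\R^{n+1}_+,y^{1-2s})$ — it is not even well defined there, and no ``weighted Sobolev--Hardy embedding'' can rescue it. Concretely, take $\chi\in C_c^\infty(\Omega)$, $\chi\not\equiv 0$, and set $\psi(x,y)=\chi(x)(1+y)^{-\eps}$ with $\eps\in(1-s,\,2(1-s))$. Then
\begin{equation}
\int_0^\infty y^{1-2s}(1+y)^{-2\eps}\,dy<\infty \quad\text{and}\quad \int_0^\infty y^{1-2s}(1+y)^{-2\eps-2}\,dy<\infty,
\end{equation}
so $\psi\in V$ (and $\psi$ vanishes on $\Omega_e\times\{0\}$), while $\int_0^\infty y^{1-2s}(1+y)^{-\eps}\,dy=\infty$, so $\Psi(x)=\int_0^\infty y^{1-2s}\psi(x,y)\,dy$ is infinite on $\{\chi>0\}$. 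The bound $\|\Psi\|_{H^1(\Omega)}\le C\|\psi\|_V$ you hope for is therefore false; the cited \cite[Proposition~6.1]{CGRU2023reduction} exploits the quantitative decay in $y$ of Caffarelli--Silvestre extensions (cf.\ Lemma~\ref{Lemma: degenerate and decay}) and does not extend to arbitrary elements of $V$. There is also a structural warning sign: the actual solution of \eqref{adjoint s in 0,1} is in general \emph{not} in $V$, since for large $y$ it behaves like a $y$-independent profile $u_1(x)$ with $\Div(\sigma\nabla u_1)=\varphi$, whose global weighted energy $\int_0^\infty y^{1-2s}\,dy\,\|\nabla u_1\|_{L^2}^2$ diverges; so no coercive Lax--Milgram formulation on $V$ can produce it. Your truncation fallback does not close this gap either: the operator norm of $L_R$ grows like $R^{1-s}$, so the energy bounds on $w_R$ are not uniform, and the ``Caccioppoli-type'' estimate you invoke controls local gradients only in terms of local $L^2$-norms of $w_R$, for which you give no uniform bound; without uniform local estimates there is no weak limit to extract, and you would still have to verify \eqref{eq: weak formulation adjoint problem} and the boundary conditions in the limit.

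The paper circumvents exactly this obstruction by a semi-explicit three-term construction: first solve $\Div(\sigma\nabla u_1)=\varphi$ in $\R^n$ with $u_1\in\dot H^1(\R^n)$ (direct method), note that the trivial extension $\wt u_1(x,y)=u_1(x)$ carries the non-decaying source $y^{1-2s}\varphi$; then subtract $\mathcal{P}^s_\sigma u_1$ to restore the boundary condition on $\Omega_e\times\{0\}$ without disturbing the interior equation (at the price of a Neumann trace on $\Omega\times\{0\}$, shown to lie in $H^{-s}(\Omega)$ via a low/high frequency and cutoff decomposition of $u_1$); finally add a correction $\wt u_2$, obtained variationally in the \emph{global} space $\dot H^1_0(\R^{n+1}_+,y^{1-2s})$ for the problem \eqref{adjoint u_2}, whose Robin condition cancels that trace. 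Only the correction is found by energy methods; the non-energy part of $w$ is built by hand. If you want to salvage your approach you would have to reproduce some version of this splitting, which is precisely the content of the paper's proof (following \cite[Lemma~3.2]{CGRU2023reduction}).
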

	
	\begin{remark}
		\label{remark: motivation notion of weak sol}
		Let us give a short explanation to require the identity \eqref{eq: weak formulation adjoint problem}. To this end regard $\varphi$ as a function supported in $\Omega$. Then multiplying \eqref{adjoint s in 0,1} by $\psi \in H^1_{c,0}(\overline{\R^{n+1}_+}, y^{1-2s})$ and integrating the resulting equation over $\R^{n+1}_+$ gives
		\[
		\begin{split}
			&\quad \, \int_{\Omega\times (0,\infty)}y^{1-2s}\varphi\psi\,dxdy\\
			&=\int_{\R^{n+1}_+}\Div_{x,y}( y^{1-2s}\Sigma \nabla_{x,y} w )\psi\,dydx\\
			&=-\int_{\R^n\times \{0\}}y^{1-2s}\partial_y w\psi\,dx-\int_{\R^{n+1}_+}y^{1-2s}\Sigma \nabla_{x,y}w\cdot\nabla_{x,y}\psi\,dydx\\
			&=-d_s\int_{\Omega\times \{0\}}qw\psi\,dx-\int_{\Omega_e\times \{0\}}y^{1-2s}\partial_y w\psi\,dx-\int_{\R^{n+1}_+}y^{1-2s}\Sigma \nabla_{x,y}w\cdot\nabla_{x,y}\psi\,dydx.
		\end{split}
		\]
		In the second equality we performed the usual integration by parts and in the third equality we used the boundary conditions. Now, by the assumption $\psi\in  H^1_{c,0}(\overline{\R^{n+1}_+}, y^{1-2s})$ the middle term in the last line vanishes and we arrive at the identity \eqref{eq: weak formulation adjoint problem}.
	\end{remark}
	
	\begin{remark}
		\label{Remark integration by parts}
		By the regularity for the function $w$ in Lemma \ref{Lem: solvable}, we could define the normal derivative $\lim_{y\to 0}y^{1-2s}\p_y w \in \dot H^{-s}_{\mathrm{loc}}(\R^n)$ as  
		\begin{equation}
			\label{eq: weak def of Neumann derivative}
			\begin{split}
				\displaystyle -\int_{\R^n}\psi(\cdot,0) \lim_{y\to 0}y^{1-2s}\p_y w\, dx & \vcentcolon=\int_{\R^{n+1}_+}y^{1-2s}\Sigma \nabla_{x,y}w\cdot \nabla_{x,y}\psi\, dxdy\\
				&\quad \, + \left\langle \varphi, \int_0^\infty y^{1-2s} \psi(\cdot,y)\, dy \right\rangle_{\wt H^{-1}(\Omega)\times H^1(\Omega)}
			\end{split}
		\end{equation} 
		for any $\psi \in H^1_{c}(\overline{\R^{n+1}_+}, y^{1-2s})$ (cf. Remark~\ref{remark: motivation notion of weak sol}).
	\end{remark}
	
	\begin{proof}[Proof of Lemma \ref{Lem: solvable}]
		The construction of solutions to \eqref{adjoint s in 0,1} is similar to the proof of \cite[Lemma 3.2]{CGRU2023reduction}. As $\varphi\in\widetilde{H}^{-1}(\Omega)$ has compact support, there exists a function $u_1\in \dot H^1(\R^n)$ solving
		\begin{equation}
			\Div ( \sigma\nabla u_1 ) = \varphi \text{ in }\R^n. 
		\end{equation}
		In fact, this solution can be obtained as a minimizer of the weakly lower semicontinuous, convex, coercive energy functional $E\colon \dot{H}^1(\R^n)\to\R$ defined by
		\begin{equation}\label{energy functional 1}
			\begin{split}
				E(\psi)\vcentcolon = \frac{1}{2}\int_{\R^n}\sigma |\nabla \psi|^2\,dx+\langle \varphi,\psi\rangle_{\widetilde{H}^{-1}(\Omega)\times H^1(\Omega)}
			\end{split}
		\end{equation}
		for $\psi\in \dot{H}^1(\R^n)$ (see~\cite[Chapter I, Theorem~1.2]{Variational-Methods}). The fact that $E$ is convex is immediate from the definition and hence let us shortly argue how one gets the other two properties. 
		\begin{enumerate}[(i)]
			\item\label{coercivity for E} (Coercivity). Note that by the Sobolev embedding, we have $\dot{H}^1(\R^n)\hookrightarrow L^{\frac{2n}{n-2}}(\R^n)$ and thus, by boundedness of $\Omega$, this ensures the continuity of the restriction $\dot{H}^1(\R^n)\ni \psi\mapsto \psi|_{\Omega}\in H^1(\Omega)$. This in turn guarantees the coercivity estimate
			\[
			E(\psi)\geq C_1\|\psi\|^2_{\dot{H}^1(\R^n)}-C_2\|\varphi\|_{\widetilde{H}^{-1}(\Omega)}^2 \geq -C_2\|\varphi\|_{\widetilde{H}^{-1}(\Omega)}^2
			\]
			for some constants $C_1,C_2>0$.
			
			\item (Weak lower semicontinuity). Note that weak lower semicontinuity follows from the fact that
			\begin{equation}
				\label{eq: equivalent norm easy case}
				\|\psi\|_{\sigma}\vcentcolon = \left(\int_{\R^n}\sigma|\nabla\psi|^2\,dx\right)^{1/2}
			\end{equation}
			is an equivalent norm on $\dot{H}^1(\R^n)$, as $\sigma$ is uniformly elliptic, and 
			\[
			\dot{H}^1(\R^n)\ni\psi\mapsto \langle \varphi,\psi|_{\Omega}\rangle_{\widetilde{H}^{-1}(\Omega)\times H^1(\Omega)}
			\] 
			is a continuous linear functional (see \ref{coercivity for E}).
		\end{enumerate}
		By $\wt u_1 (x,y)\equiv u_1(x)$, we denote the trivial extension of $u_1$ into the $y$-direction. \\

		Now, let us consider the problem
		\begin{equation}\label{adjoint u_2}
			\begin{cases}
				\Div_{x,y}( y^{1-2s}\Sigma \nabla_{x,y} \wt u_2) = 0 &\text{ in }\R^{n+1}_+, \\
				\wt u_2=0 &\text{ on }\Omega_e \times \{0\},\\
				\displaystyle-\lim_{y\to 0}y^{1-2s} \p_y \wt u_2 +d_sq \wt u_2 =-\lim_{y\to 0} y^{1-2s}\p_y \mathcal{P}^s_\sigma u_1   &\text{ on }\Omega\times \{0\},
			\end{cases}
		\end{equation}
		where $\mathcal{P}_\sigma^s$ is the Caffarelli-Silvestre extension operator given by \eqref{CS extension of function}. Performing a similar analysis as in the proof of \cite[Lemma 3.2]{CGRU2023reduction}, one sees that 
		\begin{equation}\label{P u_1 in H-s}
			\displaystyle \lim_{y \to 0} y^{1-2s}\p_y \mathcal{P}^s_{\sigma} u_1 \in \dot{H}^{-s}(\R^n)+L^2(\Omega)\hookrightarrow H^{-s}(\Omega).
		\end{equation}
		The relation \eqref{P u_1 in H-s} follows in fact by decomposing $u_1$ as
		\begin{equation}
			\label{eq: decomposition of u1}
			u_1=\eta_R u_1+(1-\eta_R)m_h(D)u_1+(1-\eta_R)m_{\ell}(D)u_1=v_R+v_h+v_{\ell},
		\end{equation}
		where $R>0$ is chosen such that $\overline{\Omega}\subset B_R$, $\eta_R\in C_c^{\infty}(B_{2R})$ satisfies $\eta_R|_{\overline{B}_R}=1$ and the Fourier multipliers $m_{\ell}(D)$, $m_h(D)$ have symbols $\rho$ and $1-\rho$ with $\rho\in C_c^{\infty}(B_2)$ satisfying $\rho|_{\overline{B}_1}=1$. As $v_R\in H^1(\R^n)\hookrightarrow H^s(\R^n)$, Lemma~\ref{Lemma: degenerate and decay}, \ref{neumann derivative} implies
		\begin{equation}
			\label{eq: neumann derivative of vR}
			-\lim_{y\to 0}y^{1-2s}\partial_y \mathcal{P}_{\sigma}^s v_R=d_s (-\Div(\sigma\nabla))^sv_R\in \dot{H}^{-s}(\R^n),
		\end{equation}
		where we used the property \eqref{eq: ellipticity of op} for the kernel of $(-\Div(\sigma\nabla))^s$ as well as the Gagliardo--Slobodeckij characterization of $\dot{H}^s(\R^n)$. Next, we assert that the same holds for the function $v_h$. To see this it is enough to show that $v_h\in H^1(\R^n)$. Note that there holds
		\[
		\begin{split}
			\int_{\R^n} \left|(1-\rho)\widehat{u}_1\right|^2\,d\xi&\leq C\int_{\R^n\setminus \overline{B}_1}|\xi|^{-2}\left|\xi\widehat{u}_1\right|^2\,d\xi\leq C\left\|\nabla u_1\right\|_{L^2(\R^n)}^2,
		\end{split}
		\]
		and hence we have $v_h\in L^2(\R^n)$. By a simple calculation one also gets $\nabla v_h\in L^2(\R^n)$ and hence $v_h\in H^1(\R^n)$.
		
		Note that there holds
		\[
		m_{\ell}(D)u_1=\mathcal{F}^{-1}( \rho\widehat{u}_1)=\check{\rho}\ast u_1
		\]
		with $\check{\rho}\in \schwartz(\R^n)$. It is immediate from Young's inequality that $m_h(D)u_1\in \dot{H}^1(\R^n)$. Hence, the same holds for $v_h$ and moreover $v_h=0$ on $\overline{B}_R$. 
		Finally, we can rely on \cite[Lemma 6.4]{CGRU2023reduction}, which is indeed a direct consequence of \eqref{eq: pointwise estimate y derivative}, to see that
		\[
		-\lim_{y\to 0}y^{1-2s}\partial_y \mathcal{P}_{\sigma}^s v_\ell\in L^2(\Omega).
		\]
		Let us discuss the solvability of \eqref{adjoint u_2}. By the trace theorem $\dot{H}^1(\R^{n+1}_+,y^{1-2s})\hookrightarrow \dot{H}^s(\R^n)$, the linear functional 
		\begin{equation}
			\label{eq: boundedness functional}
			\dot H^1(\R^{n+1}_+,y^{1-2s}) \ni \phi \mapsto \displaystyle \int_{\Omega}\phi(\cdot,0) \, \lim_{y\to 0}y^{1-2s}\mathcal{P}^s_\sigma u_1 \, dx
		\end{equation}
		is bounded, if it is interpreted accordingly (see \eqref{P u_1 in H-s}). As above, via the direct method in the calculus of variations, the problem \eqref{adjoint u_2} admits a solution $\wt u_2 \in \dot H^1(\R^{n+1}_+,y^{1-2s})$.
		
		More precisely, one can introduce the lower semicontinuous, coercive, convex energy functional
		\begin{equation}\label{energy functional}
			\begin{split}
				\mathcal{E}( \phi)  &\vcentcolon= \frac{1}{2}\int_{\R^{n+1}_+} y^{1-2s}\Sigma \nabla_{x,y}\phi \cdot \nabla_{x,y}\phi\, dxdy +\frac{d_s}{2}\int_{\Omega\times \{0\}} q \left| \phi\right|^2 \, dx \\
				&\quad \  +\int_{\Omega\times \{0\}} \phi\, \left(y^{1-2s}\mathcal{P}^s_\sigma u_1 \right)\, dx,
			\end{split}
		\end{equation}
		for $\phi \in \dot H^1_0(\R^{n+1}_+,y^{1-2s})$, and deduce that there exists a minimizer of \eqref{energy functional}, which is denoted by $\wt u_2\in \dot H^1(\R^{n+1}_+,y^{1-2s})$. The aforementioned properties of $\mathcal{E}$ can be seen similarly as for the simpler functional $E$ given by \eqref{energy functional 1}. In fact, the convexity is again clear and the first integral represents an equivalent norm on $\dot{H}^1(\R^{n+1}_+,y^{1-2s})$ and $0\leq q\in L^\infty(\Omega)$. Next, we assert that the embedding $\dot{H}^1(\R^{n+1},y^{1-2s})\hookrightarrow L^2(\Omega)$ is compact. Indeed, this follows by the chain of embeddings
		\[
		\dot{H}^1(\R^{n+1},y^{1-2s})\hookrightarrow \dot{H}^s(\R^n)\hookrightarrow H^s(\Omega)\hookrightarrow L^2(\Omega),
		\]
		where the first continuous embedding is the trace theorem for the weighted space $\dot{H}^1(\R^{n+1},y^{1-2s})$, the second the restriction to $\Omega$ and the last the compact Rellich--Kondrachov theorem. Thus, by using the equivalent norm induced from the first term in the definition of $\mathcal{E}$, \eqref{eq: boundedness functional}, and this compactness assertion, we see that $\mathcal{E}$ is weakly lower semicontinuous. It is also easy to see by the above estimates and $q\geq 0$ that $\mathcal{E}$ is coercive and hence as $H^1_0(\R^{n+1}_+,y^{1-2s})$ is weakly closed, we can again apply \cite[Theorem~1.2]{Variational-Methods} to conclude the existence of a minimizer $\widetilde{u}_2\in \dot{H}^1_0(\R^{n+1}_+,y^{1-2s})$ of $\mathcal{E}$.
		
		As \eqref{adjoint u_2} are the Euler--Lagrange equations of this minimization problem, we have obtained a solution of it, which is $\wt u_2$. Moreover, by non-negativity of $q$, \eqref{P u_1 in H-s} and \eqref{eq: boundedness functional}, $\widetilde{u}_2$ satisfies the energy estimate 
		\begin{equation}
			\left\| \wt u_2 \right\|_{\dot H^1(\R^{n+1}_+,y^{1-2s})}\leq C \left\| \lim_{y\to 0}y^{1-2s}\p_y \mathcal{P}^s_\sigma u_1 \right\|_{H^{-s}(\Omega)}<\infty,
		\end{equation}
		for some constant $C>0$ independent of $\wt u_2$. \\

		Now, consider the function 
		$$
		w\vcentcolon=\wt u_1 -\mathcal{P}^s_\sigma u_1 +\wt u_2, 
		$$ 
		then $w\in\dot H^1_{\mathrm{loc},0}(\overline{\R^{n+1}_+}, y^{1-2s})$ and $w$ solves \eqref{adjoint s in 0,1}. Furthermore, there holds
		\begin{equation}
			\label{eq: boundary cond of w}
			w(x,0)= u_1(x)-\mathcal{P}^s_\sigma u_1(x,0)+\wt u_2(x,0)=\wt u_2(x,0) \text{ for }x\in \R^n,
		\end{equation}
		which shows that indeed $w$ vanishes on $\Omega_e\times \{0\}$.
		Using that $\widetilde{u}_1=u_1$ is independent of $y$, this ensures that 
		\begin{equation}
			\label{eq: neumann der of w}
			\begin{split}
				\displaystyle -\lim_{y\to 0} y^{1-2s}\p_y w &= \lim_{y\to 0}y^{1-2s}\p_y \mathcal{P}^s_\sigma u_1 - \lim_{y\to 0}y^{1-2s}\p_y\wt u_2 \\
				&=-d_sq \wt u_2(x,0)=-d_s q w(x,0) \text{ for }x\in \Omega.
			\end{split}
		\end{equation}
		More concretely, for any $\psi \in H^1_{c,0}(\overline{\R^{n+1}_+}, y^{1-2s})$, there holds that
		\begin{equation}
			\begin{split}
				\displaystyle\int_{\R^{n+1}_+}y^{1-2s}\Sigma \nabla_{x,y}\mathcal{P}^s_\sigma u_1\cdot \nabla_{x,y}\psi \, dxdy &=-\int_{\Omega}\psi(x,0) \lim_{y\to 0}y^{1-2s}\p_y \mathcal{P}^s_\sigma  u_1\, dx, \\
				\int_{\R^{n+1}_+}y^{1-2s}\Sigma \nabla_{x,y}\wt  u_2 \cdot \nabla_{x,y}\psi \, dxdy &=-\int_{\Omega}\psi(x,0) \lim_{y\to 0}y^{1-2s}\p_y \mathcal{P}^s_\sigma u_1\, dx \\
				&\quad \, -d_s\int_{\Omega}\psi(x,0) q\widetilde{u}_2 \,  dx.
			\end{split}
		\end{equation}
		In addition, one has
		\begin{equation}
			\begin{split}
				\int_{\R^{n+1}_+} y^{1-2s}\Sigma \nabla_{x,y} \wt  u_1 \cdot \nabla_{x,y}\psi \, dxdy&=\int_{\R^{n+1}_+}y^{1-2s} \sigma \nabla u_1 \cdot \nabla \psi \, dxdy\\
				&=\int_{\R^n}\sigma \nabla u_1  \cdot \nabla  \left( \int_0^\infty y^{1-2s}\psi \,dy\right)\, dx \\
				&= -\left\langle  \varphi , \int_0^\infty y^{1-2s}\psi \, dy\right\rangle_{H^{-1}(\Omega)\times H^1(\Omega)},
			\end{split}
		\end{equation}
		where we used that by $\psi\in H^1_{c,0}(\overline{\R^{n+1}_+},y^{1-2s})$, Fubini's theorem and Jensen's inequality in particular there holds $\int_0^\infty y^{1-2s}\psi(x,y) \, dy\in H^1_c(\R^n)$. Using the weak definition of the Neumann derivative (see \eqref{eq: weak def of Neumann derivative}), we deduce from the above computations that there holds
		\[
		\begin{split}
			&\quad \, -\int_{\Omega}\psi(x,0) \lim_{y\to 0}y^{1-2s}\p_y w\, dx \\
			&=\int_{\R^{n+1}_+}y^{1-2s}\Sigma \nabla_{x,y}w\cdot \nabla_{x,y}\psi\, dxdy + \left\langle \varphi, \int_0^\infty y^{1-2s} \psi(\cdot,y)\, dy \right\rangle_{\wt H^{-1}(\Omega)\times H^1(\Omega)}\\
			&=-d_s\int_{\Omega}\psi(x,0) q(x)\widetilde{u}_2(x,0) \,  dx
		\end{split}
		\]
		for any $\psi\in H^1_{c,0}(\overline{\R^{n+1}_+}, y^{1-2s})$. Taking into account \eqref{eq: boundary cond of w}, this establishes \eqref{eq: neumann der of w}, and we complete the proof.
	\end{proof}

	Now, we are ready to prove the asserted Runge approximation. 

	\begin{proof}[Proof of Proposition \ref{prop: density}]
		First, we show the Runge approximation \eqref{equ: Runge error} by a contradiction argument. For this purpose let us suppose \eqref{equ: Runge error} does not hold, i.e., there exists $v\in S$ and $\eps>0$ such that 
		\begin{equation}\label{contradiction rigorous inequality}
			\left\| U_f - v \right\|_{H^1(\Omega)}\geq \eps >0, \text{ for all }f\in C^\infty_c(W).
		\end{equation}
		Arguing as in the formal proof above, it is enough to show the following implication:
		\begin{equation}\label{contradiction rigorous}
			\begin{split}
				\varphi( U_f ) =0 \text{ for all }f\in C^\infty_c(W) \implies \varphi(v)=0.
			\end{split}
		\end{equation}
		Let us point out that the rigorous version of the above formal proof, and its generalization to $s\in (0,1)$ as well as variable $\sigma$, is almost the same as the proof of \cite[Proposition 3.1]{CGRU2023reduction}, where one needs to consider the same vertical and transversal cutoff functions and use suitable decay properties of $\mathcal{P}^s_\sigma u_f$ as $y\to \infty$ (see Lemma \ref{Lemma: degenerate and decay}). To this end, consider $\eta_k(y)\vcentcolon=\eta_1(y/k)$, where $\eta_1 \in C^\infty_c([0,2])$ is a smooth cutoff function with $\eta_1=1$ near $y=0$ and $\int_0^\infty y^{1-2s} \eta_1 \, dy =1$.  By a change of variables it follows that 
        \[
            k^{2s-2} \int_0^\infty y^{1-2s}\eta_k \, dy =\int_0^\infty y^{1-2s} \eta_1 \, dy =1,
        \] 
        for all $k\in \N$.
		Without loss of generality, we may assume that there exists a large $R>0$ such that $\overline{\Omega}\cup \overline{W}\subset B_R$. In addition, let us introduce the sequence $\zeta_k(x):=\zeta_1(x/k)$, where $\zeta_1 \in C^\infty_c (B_{2R})$ is a smooth radial cutoff function with $\zeta_1\equiv 1$ in $\overline{B}_R$.

		Taking these smooth cutoff functions into account, let 
		\[
		( \mathcal{P}^s_\sigma u_{f})_k (x,y)\vcentcolon = \zeta_k(x)\eta_k(y)\mathcal{P}^s_\sigma u_f(x,y)
		\] 
		for $k\in \N$, which belongs to $H^1_{c}(\overline{\R^{n+1}_+}, y^{1-2s})$ as by Lemma~\ref{Lemma: degenerate and decay} we have $\mathcal{P}^s_{\sigma}u_f\in L^2_{loc}(\overline{\R^{n+1}_+},y^{1-2s})\cap \dot{H}^1(\R^{n+1}_+,y^{1-2s})$. By \eqref{contradiction rigorous} and Remark~\ref{Remark integration by parts}, we get
		\begin{equation}\label{duality 1}
			\begin{split}
				0&=\varphi ( U_f ) = \lim_{k\to \infty}\left\langle \varphi, \int_0^\infty y^{1-2s}( \mathcal{P}^s_\sigma u_{f})_k\, dy \right\rangle_{\wt H^{-1}(\Omega) \times H^1(\Omega)} \\
				&= \lim_{k\to \infty} \left(-\int_{\R^n} ( \mathcal{P}^s_\sigma u_{f})_k (\cdot,0)\lim_{y\to 0}y^{1-2s}\p_y w\, dx \right. \\
				&\qquad \qquad  \left.- \int_{\R^{n+1}_+} y^{1-2s}\Sigma \nabla_{x,y}w\cdot \nabla_{x,y}( \mathcal{P}^s_\sigma u_{f})_k \, dxdy\right) \\
				&=- \int_W f ( \lim_{y\to 0} y^{1-2s}\p_y w ) dx - \int_{\Omega} u_f ( \lim_{y\to 0} y^{1-2s}\p_y w ) dx + \lim_{k\to \infty}I_k,
			\end{split}
		\end{equation}
		where we used $\mathcal{P}_\sigma^s u_f(x,0)=u_f(x)$ for $x\in \R^n$, $\eta_k(0)=1$ and $u_f|_{\Omega_e}=f\in C_c^{\infty}(W)$. Furthermore, we set
		\begin{equation}\label{I_k}
			\begin{split}
				I_k \vcentcolon&=-\int_{\R^{n+1}_+}y^{1-2s}\Sigma ( \eta_k \zeta_k \nabla_{x,y} w) \cdot \nabla_{x,y }\mathcal{P}^s_\sigma u_f\, dxdy \\
				&\quad \, -\int_{\R^{n+1}_+}y^{1-2s} \Sigma \mathcal{P}^s_\sigma u_f \nabla_{x,y}( \eta_k \zeta_k ) \cdot \nabla_{x,y}w \, dxdy.
			\end{split}
		\end{equation}
		Moreover, in \eqref{duality 1}, we used that one has
		\begin{equation}
			\lim_{k\to\infty}\int_0^\infty y^{1-2s} \zeta_k(x)\eta_k(y)\mathcal{P}^s_\sigma u_f (x,y)\, dy=\int_0^\infty y^{1-2s} \mathcal{P}^s_\sigma u_f (x,y)\, dy \text{ in } H^1(\Omega).
		\end{equation}
		We next  claim that there holds
		\begin{equation}\label{claim in rigorous}
			-\int_{\Omega} u_f ( \lim_{y\to 0} y^{1-2s}\p_y w ) dx +\displaystyle\lim_{k\to \infty}I_k=0.
		\end{equation}
		Using the product rule for the first term in \eqref{I_k} and for the second term an integration by parts, we deduce
		\begin{equation}
			\begin{split}
				I_k &=-\int_{\R^{n+1}_+} y^{1-2s} \Sigma \nabla_{x,y} ( \eta_k \zeta_k w ) \cdot \nabla_{x,y} \mathcal{P}^s_\sigma u_f \, dxdy \\
				&\quad \, + \int_{\R^{n+1}_+} y^{1-2s}w \Sigma \nabla_{x,y}( \eta_k \zeta_k ) \cdot \nabla_{x,y}\mathcal{P}^s_\sigma u_f\, dxdy \\
				&\quad \, -\int_{\R^{n+1}_+}y^{1-2s} \Sigma \mathcal{P}^s_\sigma u_f \nabla_{x,y}( \eta_k \zeta_k ) \cdot \nabla_{x,y}w \, dxdy\\
				&= -\int_{\R^{n+1}_+} y^{1-2s} \Sigma \nabla_{x,y} ( \eta_k \zeta_k w ) \cdot \nabla_{x,y} \mathcal{P}^s_\sigma u_f \, dxdy \\
				&\quad \, + \int_{\R^{n+1}_+} y^{1-2s}w \Sigma \nabla_{x,y}( \eta_k \zeta_k ) \cdot \nabla_{x,y}\mathcal{P}^s_\sigma u_f\, dxdy \\
				&\quad \,  +\int_{\R^{n+1}_+} w \Div_{x,y} ( y^{1-2s}\Sigma \mathcal{P}^s_\sigma u_f \nabla_{x,y}( \eta_k\zeta_k) ) dxdy \\
				&= \int_{\R^n}\zeta_k w(x,0)\lim_{y\to 0}y^{1-2s}\p_y \mathcal{P}^s_\sigma u_f \, dx \\
				&\quad \, +2 \int_{\R^{n+1}_+} y^{1-2s}w \Sigma \nabla_{x,y}( \eta_k \zeta_k ) \cdot \nabla_{x,y}\mathcal{P}^s_\sigma u_f\, dxdy \\
				&\quad \, + \int_{\R^{n+1}_+} wy^{1-2s}\mathcal{P}^s_\sigma u_f \Div_{x,y}( \Sigma \nabla_{x,y} ( \eta_k \zeta_k) ) dxdy \\
				&\quad \, +(1-2s)\int_{\R^{n+1}_+}wy^{-2s}\mathcal{P}^s_\sigma u_f \zeta_k \p_y \eta_k \, dxdy.
			\end{split}
		\end{equation}
		In the second equality, we used and integration by parts and $\eta_k(y)=1$ for small $y$ and all $k\in \N$. In the last equality, we used the product rule for the term in the sixth line and for the term in the fourth line again an integration by parts together with $\eta_k(0)=1$ for $k\in\N$ as well as the PDE for $\mathcal{P}_{\sigma}^s u_f$. 
		
		Therefore, by the support conditions for the cutoff functions $\eta_k,\zeta_k$, we can write
		\begin{equation}\label{rigorous 1}
			\begin{split}
				&\quad \, -\int_{\Omega} u_f ( \lim_{y\to 0} y^{1-2s}\p_y w ) dx +\displaystyle\lim_{k\to \infty}I_k \\
				&=\lim_{k \to \infty}\int_{B_{2Rk}}\int_{0}^{2k} wy^{1-2s} \bigg\{ 2\Sigma\nabla_{x,y}( \zeta_k\eta_k ) \cdot \nabla_{x,y} \mathcal{P}^s_\sigma u_f  \\
				&\qquad \qquad \qquad  \qquad  +(  \Div_{x,y}( \Sigma \nabla_{x,y} ( \eta_k \zeta_k) )+ \frac{1-2s}{y}\zeta_k (\p_y \eta_k ) \mathcal{P}^s_\sigma u_f \bigg\} \, dydx.
			\end{split}
		\end{equation}
		More concretely, we have utilized that the boundary terms on $\R^n\times \{0\}$ are well-defined, and the equation for the adjoint equation \eqref{adjoint s in 0,1} implies that 
		\begin{equation}
			\begin{split}
				&\quad \,-\int_{\Omega} u_f ( \lim_{y\to 0} y^{1-2s}\p_y w ) dx +\underbrace{\lim_{k\to \infty}\int_{\R^n}\zeta_k w(x,0)\lim_{y\to 0}y^{1-2s}\p_y \mathcal{P}^s_\sigma u_f \, dx}_{w=0 \text{ on }\Omega_e\times \{0\}\text{ and }\zeta_k\to 1 \text{ as }k\to\infty}\\
				&=-\int_{\Omega} u_f \underbrace{ ( \lim_{y\to 0} y^{1-2s}\p_y w ) }_{=d_s qw } dx +\int_{\Omega\times \{0\}} w \underbrace{\lim_{y\to 0}y^{1-2s}\p_y \mathcal{P}^s_\sigma u_f}_{=-d_s(-\Div(\sigma\nabla))^su_f=d_squ_f} dx\\
				&=0,
			\end{split}
		\end{equation}
		which implies the identity \eqref{rigorous 1}.

		In addition, the rest of the proof is to ensure the right hand side of \eqref{rigorous 1} equals to zero. One can follow the same arguments as in the rigorous proof of \cite[Proposition 3.1]{CGRU2023reduction} to obtain
		\begin{equation}
			\begin{split}
				\left| I_k \right|  \leq  C (  I_{1,k}(w) + I_{2,k} (w)),
			\end{split}
		\end{equation}
		for some constant $C>0$ independent of $\mathcal{P}^s_\sigma u_f$, $w$ and $k$, where 
		\begin{equation}
			\begin{split}
				I_{1,k}(w)&\vcentcolon= k^{-1}\int_{\R^n}\int_{k}^{2k}y^{1-2s}|w|  \left( \left| \nabla_{x,y}\mathcal{P}^s_\sigma u_f \right| + \frac{\left| \mathcal{P}^s_\sigma u_f\right|}{k} \right) dydx, \\
				I_{2,k}(w)&\vcentcolon =k^{-1}\int_{B_{2Rk}\setminus B_{Rk}} \int_0^{2k}y^{1-2s}|w|  \left( \left| \nabla_{x,y}\mathcal{P}^s_\sigma u_f \right| + \frac{\left| \mathcal{P}^s_\sigma u_f\right|}{k} \right) dydx,
			\end{split}
		\end{equation}
		for $k\in \N$. Recalling that the function $w$ is constructed via $w=\wt u_1-\mathcal{P}^s_\sigma u_1 +\wt u_2$, and they have the same regularity properties as the ones in \cite{CGRU2023reduction}.
		Therefore, the rest of the argument is the same as in the rigorous proof of \cite[Proposition 3.1]{CGRU2023reduction}, so let us skip this lengthy analysis and for more details we refer the reader to the aforementioned article. As a result, there must hold $\lim_{k \to \infty}I_{1,k}=\lim_{k\to \infty}I_{2,k}=0$. Hence, we deduce that 
		\begin{equation}
			\int_W f ( \lim_{y\to 0} y^{1-2s}\p_y w ) dx=0, \text{ for all }f\in C^\infty_c(W).
		\end{equation}
		Since $f\in C^\infty_c(W)$ is arbitrary, we infer $\lim_{y\to 0}y^{1-2s}\p_y w =0$ in $W\times \{0\}$. Then the UCP for \eqref{adjoint s in 0,1} in the exterior domain implies that $w\equiv 0 $ in $\Omega_e\times (0,\infty)$ as desired.

		Now, for getting the implication \eqref{contradiction rigorous}, let us utilize the same cutoff functions $\beta_k(y)$ with $\beta_k(0)=0$ for $k\in\N$ (as constructed in Step 2 of the proof of \cite[Proposition 3.1]{CGRU2023reduction} or the proof of \cite[Proposition 6.1]{LLU2023calder}) to avoid boundary contributions on $\Omega\times \{0\}$. We can conclude by repeating the same argument as in the rest of the proof of \cite[Proposition 3.1]{CGRU2023reduction} and so the implication \eqref{contradiction rigorous} holds. For an outline of this argument, we refer the reader to the formal proof with $\sigma=1$ and $s=1/2$.
		
		Now, as explained in the formal proof, the conclusion $\varphi(v)=0$ contradicts the existence of a real number $\alpha$ satisfying \eqref{equ: contradiction} and hence we may deduce that such a function $v\in S$ having the property \eqref{contradiction rigorous inequality} cannot exist. Therefore, the Runge approximation holds eventually.
	\end{proof}

	\begin{remark}
		Notice that all above arguments hold even when $\sigma$ is an anisotropic, uniformly elliptic coefficient. We only need the isotropy of $\sigma$ to derive our uniqueness results and hence restrict ourselves to this case. Let us emphasize that the condition $\sigma=1$ in $\Omega_e$ is needed in order to derive $\lim_{k\to \infty}I_k=0$ in the rigorous proof of Proposition \ref{prop: density}, which is completely the same situation as in \cite{CGRU2023reduction}.
	\end{remark}
	
	\section{Proof of main results}\label{sec: proof of main thm}
	
	In this section, we show our main results.
	
	\subsection{Proof of Theorem \ref{thm: uniqueness of potential}}
	
	\begin{proof}[Proof of Theorem \ref{thm: uniqueness of potential}]
		Fix some functions $g,h\in H^{1/2}(\partial\Omega)$ and denote by $v\in H^1(\Omega)$ the unique solution to
		\[
		\begin{cases}
			\Div(\sigma\nabla v)=0&\text{ in }\Omega,\\
			v=g&\text{ on }\partial\Omega.
		\end{cases}
		\]
		By Proposition \ref{prop: density}, we can choose $( g_k)_{k\in\N}\subset C_c^{\infty}(W)$ such that $U_{g_k}\to v$ in $H^1(\Omega)$ as $k\to\infty$. Using the continuity of $\Lambda_{\sigma}\colon H^{1/2}(\partial\Omega)\to H^{-1/2}(\partial\Omega)$, the trace theorem and $\sigma|_{\partial\Omega}=1$ we may compute 
		\begin{equation}
			\label{eq: imp identity proof i}
			\begin{split}
				\left\langle \Lambda_\sigma g,h \right\rangle&=\lim_{k\to\infty}\left\langle \Lambda_{\sigma}( U_{g_k}|_{\partial\Omega}),h\right\rangle=\lim_{k\to\infty}\left\langle \partial_{\nu} v_k,h \right\rangle,
			\end{split}
		\end{equation}
		where $v_k\in H^1(\Omega)$ is the unique solution of 
		\begin{equation}
			\label{eq: PDE for vk}
			\begin{cases}
				\Div( \sigma\nabla v_k)=0&\text{ in }\Omega,\\
				v_k=\left.U_{g_k}\right|_{\partial\Omega}&\text{ on }\partial\Omega.
			\end{cases}
		\end{equation}
		In \eqref{eq: imp identity proof i} and for the rest of this article, we set $\langle\cdot,\cdot\rangle=\langle \cdot,\cdot\rangle_{H^{-1/2}(\partial\Omega)\times H^{1/2}(\partial\Omega)}$.

		Next, we want to see the identity
		\[
		\lim_{k\to\infty} \partial_\nu( U_{g_k}-v_k) =0\text{ in }H^{-1/2}(\partial\Omega).
		\]
		Note that by \eqref{reduced equation} and  \eqref{eq: PDE for vk} the function $w_k=U_{g_k}-v_k$ solves 
		\[
		\begin{cases}
			\Div( \sigma\nabla w_k) =d_s q u_{g_k}&\text{ in }\Omega,\\
			w_k=0&\text{ on }\partial\Omega.
		\end{cases}
		\]
		By writing $w_k= ( U_{g_k}-v) -( v_k-v)$, we have 
		\[
		w_k\to 0\text{ in }H^1(\Omega).
		\]
		In fact, $U_{g_k}-v\to 0$ in $H^1(\Omega)$ follows from Proposition~\ref{prop: density}. On the other hand, by the standard continuity estimate for elliptic equations (cf.~e.g.~ \cite[Chapter~8]{gilbarg2015elliptic}), we have
		\begin{equation}
			\label{eq: cont estimate for uniqueness proof i}
			\left\|v-v_k \right\|_{H^1(\Omega)}\leq C\left\|g- \left.U_{g_k}\right|_{\partial\Omega}\right\|_{H^{1/2}(\partial\Omega)},
		\end{equation}
		where we used that $v|_{\partial\Omega}=g$. Now, as $U_{g_k}|_{\partial\Omega}\to g$ in $H^{1/2}(\partial\Omega)$, we see from \eqref{eq: cont estimate for uniqueness proof i} that $v-v_k\to 0$ in $H^1(\Omega)$ as $k\to\infty$.
		Therefore
		\[
		\Div( \sigma\nabla w_k) =d_s qu_{g_k}\to 0\text{ in }H^{-1}(\Omega)\text{ as }k\to\infty.
		\]
		In fact, for any $\varphi\in H^1_0(\Omega)$ we have
		\begin{equation}
			\label{eq: limit potential term proof i}
			-d_s\lim_{k\to\infty} \int_{\Omega} q u_{g_k}\varphi\,dx=\lim_{k\to\infty}\int_{\Omega}\sigma \nabla w_k\cdot\nabla \varphi\,dx=0.
		\end{equation}
		If we multiply above PDE by $\varphi\in H^1(\Omega)$, then divergence theorem gives\footnote{Note that after rearranging terms, this is nothing else than the related (weak) normal derivative.}
		\[
		\begin{split}
			d_s\int_{\Omega}q u_{g_k}\varphi\,dx &=\int_{\Omega} \Div( \sigma\nabla w_k) \varphi\,dx \\
			&=\int_{\partial \Omega}( \partial_\nu w_k) \varphi|_{\partial\Omega}\,d\mathcal{H}^{n-1}-\int_{\Omega}\sigma\nabla w_k\cdot \nabla \varphi\,dx,
		\end{split}
		\]
		where the boundary integral has to be interpreted as a duality pairing between $H^{-1/2}(\partial\Omega)$ and $H^{1/2}(\partial\Omega)$.
		This shows
		\begin{equation}
			\label{eq: integration by parts formula limit proof i}
			\lim_{k\to\infty}\left|\int_{\partial \Omega} ( \partial_\nu w_k) \varphi|_{\partial\Omega}\,d\mathcal{H}^{n-1}-d_s\int_{\Omega}q u_{g_k}\varphi\,dx\right|=0.
		\end{equation}
		Now, let $\chi\in C_c^{\infty}(\Omega)$ be a cutoff function with $\chi=1$ on $\supp q$. Then for any $\varphi\in H^1(\Omega)$, using \eqref{eq: limit potential term proof i}, we may compute
		\begin{equation}
			\label{eq: potential term vanishes proof i}
			\begin{split}
				\lim_{k\to\infty}\int_{\Omega}qu_{g_k}\varphi\,dx&=\lim_{k\to\infty}\int_{\Omega}qu_{g_k}\chi \varphi\,dx=0.
			\end{split}
		\end{equation}
		Hence, by  \eqref{eq: integration by parts formula limit proof i} and \eqref{eq: potential term vanishes proof i} we get
		\[
		\begin{split}
			&\quad \, \lim_{k\to\infty}\left|\int_{\partial \Omega}( \partial_\nu w_k) \varphi|_{\partial\Omega}\,d\mathcal{H}^{n-1}\right|\\
			&\leq \limsup_{k\to\infty}\left|\int_{\partial \Omega}( \partial_\nu w_k)\varphi|_{\partial\Omega}\,d\mathcal{H}^{n-1}-d_s\int_{\Omega}q u_{g_k}\varphi\,dx\right| \\
			&\quad \, +\limsup_{k\to\infty}\left|d_s\int_{\Omega}q u_{g_k}\varphi\,dx\right|\\
			&=0.
		\end{split}
		\]
		This means that $\partial_{\nu} w_k\to 0$ in $H^{-1/2}(\partial\Omega)$ and therefore we get
		\[
		\lim_{k\to\infty} \partial_\nu U_{g_k}=\lim_{k\to\infty}\partial_\nu ( U_{g_k}-v_k) +\lim_{k\to\infty}\partial_\nu  v_k=\lim_{k\to\infty}\partial_\nu v_k\text{ in }H^{-1/2}(\partial\Omega).
		\]
		This ensures
		\begin{equation}
			\label{eq: identity for local DN map proof i}
			\begin{split}
				\left\langle \Lambda_\sigma g,h \right\rangle&=\lim_{k\to\infty}\left\langle \Lambda_{\sigma}( U_{g_k}|_{\partial\Omega}),h\right\rangle=\lim_{k\to\infty}\left\langle \partial_{\nu} v_k,h \right\rangle=\lim_{k\to\infty}\left\langle \partial_\nu U_{g_k},h \right\rangle
			\end{split}
		\end{equation}
		for any $g,h\in H^{1/2}(\partial\Omega)$. 
	\end{proof}

	\subsection{Proof of Theorem \ref{thm: uniqueness}}
	
	\begin{proof}[Proof of Theorem \ref{thm: uniqueness}]

		If $\mathcal{N}=\overline{\Omega}$, then $\sigma_1=\sigma_2$ in $\Omega$ and the uniqueness result follows from \cite[Theorem 1.1]{GLX}. Hence, we can assume without loss of generality that $\mathcal{N}$ is a proper subset of $\overline{\Omega}$. Let $u_{f_\ell}^{(j)}\in H^s(\R^n)$ be the solution to
		\begin{equation}
			\label{eq: opt tom eq thm 1.3}
			\begin{cases}
				( ( -\Div ( \sigma_j \nabla ) )^s +q_j)  u_{f_\ell}^{(j)} =0 &\text{ in }\Omega, \\
				u_{f_\ell}^{(j)} =f_\ell &\text{ in }\Omega_e,
			\end{cases}
		\end{equation}
		for $j,\ell \in \{1,2\}$, where $f_1,f_2 \in C^\infty_c(W)$ are arbitrarily smooth functions. By Lemma~\ref{Lem: C-S reduction}, we know that the functions
		\begin{equation}\label{eq: CS type extension main proof}
			U_{f_{\ell}}^{(j)}=\int_0^{\infty}y^{1-2s}\mathcal{P}_{\sigma_j}^s u_{f_{\ell}}^{(j)}\,   dy\in H^1_{\mathrm{loc}}(\R^n)
		\end{equation}
		solve
		\begin{equation}
			\label{eq: local-nonlocal PDE main proof}
			-\Div (  \sigma_j \nabla U_{f_{\ell}}^{(j)} ) =d_s ( -\Div (  \sigma_j \nabla ) )^s u_{f_{\ell}}^{(j)} \text{ in }\R^n
		\end{equation}
		for $j,\ell \in  \{1,2\} $, and by Lemma~\ref{lem: regularity} the equation \eqref{eq: local-nonlocal PDE main proof} holds in $H^{-s}(\R^n)$. Furthermore, note that $\mathcal{P}_{\sigma_j}^s u_{f_{\ell}}^{(j)}$ solves the degenerate elliptic equation 
		\begin{equation}
			\begin{cases}
				\Div_{x,y}( y^{1-2s}\Sigma_j \nabla_{x,y} 
				( \mathcal{P}_{\sigma_j}^s u_{f_{\ell}}^{(j)} )  ) =0 &\text{ in }\R^{n+1}_+, \\
				( \mathcal{P}_{\sigma_j}^s u_{f_{\ell}}^{(j)}) (x,0)=u_{f_{\ell}}^{(j)}(x) \text{ on }\R^n,
			\end{cases}
		\end{equation}
		for $j,\ell\in \{1,2\}$. 
		The above equation is derived from the Caffarelli-Silvestre type extension, and it has nothing to do with the nonlocal equation for $u_{f_{\ell}}^{(j)}\in H^s(\R^n)$, for $\ell=1,2$.

		In particular, the condition \eqref{nonlocal DN map same} implies
		\begin{equation}
			\begin{split}
				-\lim_{y\to 0}y^{1-2s} \p_y \mathcal{P}_{\sigma_1}^s u_{f_{\ell }}^{(1)} &=d_s( -\Div ( \sigma_1 \nabla ) )^su_{f_{\ell}}^{(1)}  \\
				&=d_s( -\Div ( \sigma_2 \nabla ) )^su_{f_{\ell}}^{(2)}  = -\lim_{y\to 0} y^{1-2s} \p_y \mathcal{P}_{\sigma_2}^s u_{f_{\ell}}^{(2)} \text{ in }W,
			\end{split}
		\end{equation}
		whenever $f_\ell \in C^\infty_c(W)$.
		By the assumption $\sigma_1=\sigma_2$ in the open neighborhood $\mathcal{N}\subset \overline{\Omega}$ of $\p \Omega$, one knows $\sigma\vcentcolon =\sigma_1 =\sigma_2$ in $\mathcal{N}\cup \Omega_e$. In particular, for any $f_\ell \in C^\infty_c(W)$ the difference $V=\mathcal{P}_{\sigma_1}^s u_{f_{\ell}}^{(1)}-\mathcal{P}_{\sigma_2}^s u_{f_{\ell}}^{(2)}$ satisfies 
		\begin{equation}
			\label{eq: UCP argument}
			\begin{cases}
				\Div_{x,y}( y^{1-2s} \Sigma\nabla_{x,y} 
				V ) =0 &\text{ in }( \mathcal{N}\cup \Omega_e ) \times (0,\infty) , \\
				V = \displaystyle \lim_{y\to 0}y^{1-2s}\p_y V =0 &\text{ on }W\times \{0\}.
			\end{cases}
		\end{equation}
		Then by the UCP for the PDE in \eqref{eq: UCP argument}, one can conclude that  
		\begin{equation}\label{extension u_1=u_2}
			\mathcal{P}_{\sigma_1}^s u_{f_{\ell}}^{(1)}=\mathcal{P}_{\sigma_2}^s u_{f_{\ell}}^{(2)} \text{ in }( \mathcal{N}\cup \Omega_e ) \times (0,\infty),
		\end{equation} 
		for any $f_\ell \in C^\infty_c(W)$ and $\ell=1,2$. To obtain this one can directly invoke the results in \cite[Section 5]{GLX} or argue as follows. First of all, by the condition $\sigma|_{\Omega_e}=1$, there holds
		\[
		\begin{cases}
			\Div_{x,y}( y^{1-2s}\nabla_{x,y} 
			V ) =0 &\text{ in }( \mathcal{N}\cup \Omega_e ) \times (0,\infty) , \\
			V = \displaystyle \lim_{y\to 0}y^{1-2s}\p_y V =0 &\text{ on }W\times \{0\}.
		\end{cases}
		\]
		Secondly, after an even reflection of $V$ which requires that the normal derivative of $V$ vanishes on $W\times \{0\}$, we deduce from \cite[Theorem~2.3.12]{fabes1982local} that $V$ is locally H\"older continuous on $W\times [0,\infty)$. Therefore, we can apply \cite[Proposition~2.2]{ruland2015unique} to see that $V=0$ on $B_r((x,0))\cap \overline{\R^{n+1}_+}$ for some $x\in W$ and $r>0$, where $B_r((x,0))$ is the ball in $\R^{n+1}$ with radius $r>0$ and center $(x,0)$ such that $B_r((x,0))\cap \{y=0\}\subset W$. But now we can apply the usual UCP for the differential operator in \eqref{eq: UCP argument} on the sets of the form $(\mathcal{N}\cup \Omega_e)\times (y_0,y_1)$ for any sufficiently small $y_0>0$ and $y_1>y_0$ and may conclude that $V=0$ on these sets. Thus, in the end we get $V=0$ in $(\mathcal{N}\cup\Omega_e)\times (0,\infty)$ as claimed.
		
		Thus, via the definition of the function $U_{f_\ell}^{(j)}$ for $j,\ell \in \{1,2\} $, then there holds that 
		\begin{equation}\label{U_1=U_2}
			\begin{split}
				U_{f_\ell}^{(1)}=\int_0^\infty y^{1-2s}\mathcal{P}^{s}_{\sigma_1}u_{f_\ell}^{(1)}(\cdot,y)\, dy=\int_0^\infty y^{1-2s}\mathcal{P}^{s}_{\sigma_2}u_{f_\ell}^{(2)}(\cdot,y)\, dy=U_{f_\ell}^{(2)}
			\end{split}
		\end{equation}
		in $\mathcal{N}\cup \Omega_e$, which will be used in the forthcoming proof.

		We next show 
		\begin{equation}\label{q_1=q_2 in nbhd of boundary}
			q_1=q_2 \text{ in }\mathcal{N}\cap \Omega.
		\end{equation}
		Combining the condition \eqref{nonlocal DN map same} and \eqref{DN-integral id 2} in Lemma~\ref{lemma: diff DN maps}, we obtain 
		\begin{equation}
			B_{\sigma_1,q_1}  ( u^{(1)}_{f_1},u^{(2)}_{f_2}) - B_{\sigma_2,q_2}  ( u^{(2)}_{f_1},u^{(2)}_{f_2})  =0,
		\end{equation}
		which by the definition of the bilinear forms (see \eqref{eq: bilinear form}) is equivalent to 
		\begin{equation}\label{integral id 1}
			\begin{split}
				&\left\langle( -\Div(\sigma_1 \nabla))^s u^{(1)}_{f_1} -( -\Div (\sigma_2 \nabla)) ^s u^{(2)}_{f_1}, u^{(2)}_{f_2}\right\rangle_{H^{-s}(\R^n)\times H^s(\R^n)}\\ 
				& \quad + \int_{\Omega}	( q_1u^{(1)}_{f_1} -q_2u^{(2)}_{f_1})  u^{(2)}_{f_2}\, dx=0.
			\end{split}
		\end{equation}
		First inserting \eqref{eq: local-nonlocal PDE main proof} into \eqref{integral id 1} and then decomposing $u_{f_2}^{(2)}=( u_{f_2}^{(2)}-f_2) +f_2$, we obtain by \eqref{U_1=U_2} the identity
		\begin{equation}
			\label{eq: identity step 1}
			\begin{split}
				&-\frac{1}{d_s}\left\langle \Div( \sigma_1 \nabla U^{(1)}_{f_1}) -\Div ( \sigma_2 \nabla U^{(2)}_{f_1}) , u^{(2)}_{f_2}-f_2\right\rangle_{H^{-s}(\R^n)\times H^s(\R^n)}\\ 
				& \quad + \int_{\Omega}	(  q_1 u^{(1)}_{f_1} -q_2u^{(2)}_{f_1} )  ( u^{(2)}_{f_2}-f_2 )  dx=0.
			\end{split}
		\end{equation}
		Recall that by Lemma~\ref{lem: regularity}, we have
		\begin{equation}
			\label{eq: divergence estimate}
			\begin{split}
				&\quad \, \left\langle \Div( \sigma_1 \nabla U^{(1)}_{f_1}) -\Div ( \sigma_2 \nabla U^{(2)}_{f_1}) , \varphi\right\rangle_{H^{-s}(\R^n)\times H^s(\R^n)}\\
				&=\left\langle \Div( \sigma_1 \nabla U^{(1)}_{f_1}) -\Div ( \sigma_2 \nabla U^{(2)}_{f_1}) , \varphi\right\rangle_{H^{-s}(\Omega)\times \widetilde{H}^s(\Omega)}\\
				&\leq \left( \left\|\Div( \sigma_1 \nabla U^{(1)}_{f_1})\right\|_{H^{-s}(\Omega)}+ \left\|\Div ( \sigma_2 \nabla U^{(2)}_{f_1})\right\|_{H^{-s}(\Omega)}\right)\|\varphi\|_{\widetilde{H}^s(\Omega)}
			\end{split}
		\end{equation}
		for all $\varphi\in \widetilde{H}^s(\Omega)$. We next want to derive useful integral identities in order to show the uniqueness of the potentials.

		Let us fix some $\varphi\in C_c^{\infty}(\mathcal{N}\cap \Omega)$. By the Runge approximation (Proposition \ref{prop:(Runge-approximation-property)}), there exists a sequence of exterior data $\left\{f_{2,m}\right\}_{m\in \N}\subset C^\infty_c(W)$ such that 
		\begin{equation}
			u_{f_{2,m}}^{(2)}-f_{2,m} \to \varphi \text{ in }\wt H^s(\Omega) \text{ as }m\to \infty.
		\end{equation}
		With this sequence of functions at hand, we obtain 
		\begin{equation}\label{eq: identity step 1'}
			\begin{split}
				&\quad \, \lim_{m\to\infty}\, \left\langle \Div( \sigma_1 \nabla U^{(1)}_{f_{1}}) -\Div ( \sigma_2 \nabla U^{(2)}_{f_{1}}) , u^{(2)}_{f_{2,m}}-f_{2,m}\right\rangle_{H^{-s}(\R^n)\times H^s(\R^n)}\\ 
				&=  \left\langle \Div( \sigma_1 \nabla U^{(1)}_{f_{1}}) -\Div ( \sigma_2 \nabla U^{(2)}_{f_{1}}) , \varphi\right\rangle_{H^{-s}(\R^n)\times H^s(\R^n)}\\ 
				& =\left\langle \Div( \sigma_1 \nabla U^{(1)}_{f_{1}}) -\Div ( \sigma_2 \nabla U^{(2)}_{f_{1}}) , \varphi\right\rangle_{\distr(\R^n)\times \test(\R^n)}\\ 
				& =\int_{\R^n} \left[ U^{(1)}_{f_{1}}\Div( \sigma_1\nabla \varphi) -U^{(2)}_{f_{1}}\Div( \sigma_2\nabla \varphi) \right] \,dx\\
				&=\int_{\mathcal{N}\cap \Omega}\left[ U^{(1)}_{f_{1}}\Div( \sigma_1\nabla \varphi) -U^{(2)}_{f_{1}}\Div( \sigma_2\nabla \varphi)\right]\,dx\\
				& =0,
			\end{split}
		\end{equation}
		where we used that $\supp(\varphi)\subset \mathcal{N}\cap \Omega$ and the last integral vanishes by the fact that $\sigma_1 =\sigma_2$ in $\mathcal{N}$ and \eqref{U_1=U_2}.
		Therefore, by passing to the limit $m\to\infty$ in \eqref{eq: identity step 1} (with $f_2=f_{2,m}$) and using \eqref{eq: identity step 1'} we get
		\[
		\int_{\mathcal{N}\cap \Omega}(  q_1 u_{f_1}^{(1)}-q_2 u_{f_1}^{(2)}) \varphi\,dx=0
		\]
		for all $\varphi\in C_c^{\infty}(\mathcal{N}\cap \Omega)$. Hence, we can conclude that 
		\begin{equation}\label{q1u1=q2u2 in nbd of boundary}
			q_1u^{(1)}_{f_1}=q_2u^{(2)}_{f_1} \text{ in } \mathcal{N}\cap \Omega.
		\end{equation}
		Moreover, by using \eqref{extension u_1=u_2}, we also have 
		\begin{equation}
			u_{f_1}^{(1)}(x)=	( \mathcal{P}_{\sigma_1}^s u_{f_{1}}^{(1)} ) (x,0)=( \mathcal{P}_{\sigma_2}^s u_{f_{1}}^{(2)}) (x,0)=u_{f_1}^{(2)}(x), \text{ for }x\in \mathcal{N}.
		\end{equation} 
		This implies
		\[
		(  q_1-q_2)  u^{(1)}_{f_1}=0 \text{ in }\mathcal{N}\cap \Omega,
		\]
		for any $f_1\in C_c^{\infty}(W)$. Fix some nonzero $f_1\in C_c^{\infty}(W)$. Now, suppose that there exists $x\in \mathcal{N}\cap \Omega$ such that $q_1(x)\neq q_2(x)$. By continuity there exists an open ball $B_r(x)\subset \Omega\cap \mathcal{N}$ such that $q_1(y)\neq q_2(y)$ for all $y\in B_r(x)$ and thus one has $u^{(1)}_{f_1}=0$ on this set. By \eqref{eq: opt tom eq thm 1.3}, one then obtains $(-\Div(\sigma_1\nabla))^s u_{f_1}^{(1)}=0$ on $B_r(x)$. The UCP together with the conditions 
		\[
		u_{f_1}^{(1)}=(-\Div(\sigma_1\nabla))^s u_{f_1}^{(1)}=0\text{ on }B_r(x)
		\]
		imply that $u_{f_1}^{(1)}=0$ in $\R^n$. This in turn contradicts the assumption $f_1\neq 0$. Thus, we must have $q_1=q_2$ in $\mathcal{N}\cap\Omega$.
	\end{proof}		
	
	\subsection{Proof of Theorem \ref{thm: unique det}}
	
	Now, taking Proposition~\ref{prop: density} into account, we can turn the local uniqueness result (Theorem \ref{thm: uniqueness}) in certain cases into a global uniqueness result. Before presenting the proof, we need to introduce some preliminary material. 
	
	For any $1<p<\infty$ and any bounded Lipschitz domain $V\subset\R^n$, we denote by $E_p(V)$ the Banach space of vector fields $F\in L^p(V;\R^n)$ such that $\Div F\in L^p(V)$ and its natural norm is given by
	\begin{equation}
		\|F\|_{E_p(V)}\vcentcolon = \|F\|_{L^p(V;\R^n)}+\|\Div F\|_{L^p(V)}.
	\end{equation}
	It is well-known that smooth vector fields $F\in C^{\infty}(\overline{V};\R^n)$ are dense in $E_p(V)$ (see \cite[eq.~(1.2.16)]{sohr2012navier}). Furthermore, throughout this section $p'$ stands for the H\"older conjugate exponent of $1<p<\infty$.
	
	The importance of the space $E_p(V)$ comes from the fact that vector fields in that space have a well-defined normal trace.
	
	\begin{lemma}[{\cite[Lemma~1.2.2--1.2.3]{sohr2012navier}}]
		\label{lemma: normal trace}
		Let $1<p<\infty$ and suppose $V\subset \R^n$ is a bounded Lipschitz domain. Then there exists a bounded linear operator 
		$$
		\Gamma_N^V\colon E_p(V)\to (W^{1-1/p,p}(\partial V))^*,
		$$
		which is called \emph{generalized normal trace} and is also denoted occasionally by $\nu\cdot F|_{\partial V}$ with $\nu$ being the normal outward pointing vector field to $\partial V$. The normal trace $\Gamma_N^V$ has the following properties:
		\begin{enumerate}[(i)]
			\item\label{prop 1 norm trace} If $F\in C^{\infty}(\overline{V},\R^n)$ and $\varphi\in W^{1-1/p',p'}(\partial V)$, then one has
			\begin{equation}
				\label{eq: normal trace smooth case}
				\left\langle \Gamma_N^V F,\varphi\right\rangle=\int_{\partial V}(\nu \cdot F)\varphi\,d\mathcal{H}^{n-1}.
			\end{equation}
			\item\label{prop 2 norm trace} If $F\in E_p(V)$ and $\Phi\in W^{1,p'}(\partial V)$, then there holds
			\begin{equation}
				\label{eq: integration by parts formula for E}
				\left\langle \Gamma_N^V F,\Phi|_{\partial V}\right\rangle =\langle \Phi,\Div F\rangle_{L^2(V)}+ \left\langle\nabla \Phi,F \right\rangle_{L^2(V)}.
			\end{equation}
		\end{enumerate}
	\end{lemma}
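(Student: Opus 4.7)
The plan is to follow the standard functional-analytic construction: define $\Gamma_N^V$ first on the dense subspace $C^\infty(\overline{V};\R^n) \subset E_p(V)$ by the formula in \ref{prop 1 norm trace}, bound it in the $E_p(V)$-norm via a duality argument involving a lifting from the boundary, and then extend by continuity to all of $E_p(V)$.

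Concretely, for $F \in C^\infty(\overline{V};\R^n)$ and $\varphi$ in the appropriate boundary space, I would use the surjectivity and boundedness of the trace operator $W^{1,p'}(V) \to W^{1-1/p',p'}(\partial V)$ to select an extension $\Phi \in W^{1,p'}(V)$ of $\varphi$ with $\|\Phi\|_{W^{1,p'}(V)} \leq C\|\varphi\|$. The classical divergence theorem then yields
\begin{equation*}
\int_{\partial V}(\nu\cdot F)\varphi\,d\mathcal{H}^{n-1} = \int_V \Phi\,\Div F\,dx + \int_V \nabla\Phi\cdot F\,dx,
\end{equation*}
and by H\"older's inequality the right-hand side is bounded by $\|F\|_{E_p(V)}\|\Phi\|_{W^{1,p'}(V)} \leq C\|F\|_{E_p(V)}\|\varphi\|$. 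After verifying that the right-hand side depends only on $\varphi$ and not on the particular extension $\Phi$---the difference of two such extensions lies in $W^{1,p'}_0(V)$, and one concludes by approximating it with $C_c^\infty(V)$ functions and invoking the distributional action of $\Div F$ against them---the map $\varphi \mapsto \int_{\partial V}(\nu\cdot F)\varphi\,d\mathcal{H}^{n-1}$ defines a bounded linear functional on the boundary space with norm controlled by $\|F\|_{E_p(V)}$. Density of $C^\infty(\overline{V};\R^n)$ in $E_p(V)$, as recorded in \cite[eq.~(1.2.16)]{sohr2012navier}, then produces a unique bounded extension $\Gamma_N^V$, and \ref{prop 1 norm trace} holds by construction.

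For \ref{prop 2 norm trace}, the plan is to approximate $F \in E_p(V)$ by smooth vector fields $F_k \to F$ in $E_p(V)$. For each smooth $F_k$ and any $\Phi \in W^{1,p'}(V)$, the identity \eqref{eq: integration by parts formula for E} reduces to the classical divergence theorem, which one first verifies for $\Phi \in C^\infty(\overline{V})$ and then extends by density of smooth functions in $W^{1,p'}(V)$. Passing to the limit $k \to \infty$, both volume integrals on the right-hand side converge by H\"older's inequality together with $F_k \to F$ in $E_p(V)$, while the boundary pairing on the left-hand side converges by the just-established boundedness of $\Gamma_N^V$ and continuity of the trace $\Phi \mapsto \Phi|_{\partial V}$.

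The principal subtlety in this scheme is the well-definedness check in the first step (independence from the chosen extension), which ensures that the boundary functional is intrinsically associated to $F$ alone; once this is settled, the remainder of the argument is routine density and continuity. Since the lemma is a standard functional-analytic result available in the literature, one may also simply invoke \cite[Lemma~1.2.2--1.2.3]{sohr2012navier} verbatim, as the authors do in the statement.
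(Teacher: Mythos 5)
Your construction is correct and is essentially the standard argument from the cited reference: the paper itself offers no proof of this lemma, invoking \cite[Lemma~1.2.2--1.2.3]{sohr2012navier} directly, and your route (define on smooth fields via the boundary integral, bound through a $W^{1,p'}$-lifting and H\"older, check independence of the extension, extend by the density recorded in \cite[eq.~(1.2.16)]{sohr2012navier}, and obtain \ref{prop 2 norm trace} by approximation) is exactly how the result is established there. No gaps worth flagging beyond the cosmetic point that the pairings written as $\langle\cdot,\cdot\rangle_{L^2(V)}$ are of course $L^p$--$L^{p'}$ duality pairings, which your H\"older estimates already treat correctly.
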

	
	With these results at hand, we can prove the following auxiliary lemma.
	
	\begin{lemma}
		\label{auxialiary lemma}
		Let $1<p<\infty$, suppose $\Omega\Subset \Omega'\subset\R^n$ are bounded Lipschitz domains and set $\widetilde{\Omega}=\Omega'\setminus \overline{\Omega}$. Then for any $F\in E_p(\Omega')$ and $\varphi\in W^{1-1/p'}(\partial\Omega)$, there exists $\Phi\in W^{1,p'}(\Omega')$ such that $\Phi|_{\partial\Omega}=\varphi$, $\Phi|_{\partial\Omega'}=0$ and
		\begin{equation}
			\label{eq: integration by parts formula intermediate bdry}
			\left\langle \Gamma_N^{\Omega}F|_{\Omega},\varphi \right\rangle=-\left(\langle \Phi,\Div F\rangle_{L^2(\widetilde{\Omega})}+\langle \nabla \Phi,F\rangle_{L^2(\widetilde{\Omega})}\right).
		\end{equation}
	\end{lemma}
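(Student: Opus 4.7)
The plan is to reduce \eqref{eq: integration by parts formula intermediate bdry} to the integration-by-parts identity \eqref{eq: integration by parts formula for E} of Lemma~\ref{lemma: normal trace}\ref{prop 2 norm trace} applied on the ``shell'' domain $\widetilde{\Omega}$, reading off the boundary contribution only on $\partial\Omega$. First, since $\widetilde{\Omega}$ is a bounded Lipschitz domain with boundary $\partial\widetilde{\Omega}=\partial\Omega\sqcup\partial\Omega'$ (disjoint union, as $\Omega\Subset \Omega'$), the classical trace extension theorem provides $\Phi_0\in W^{1,p'}(\widetilde{\Omega})$ with $\Phi_0|_{\partial\Omega}=\varphi$ and $\Phi_0|_{\partial\Omega'}=0$. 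Gluing $\Phi_0$ with any $W^{1,p'}$-extension of $\varphi$ into $\Omega$ produces $\Phi\in W^{1,p'}(\Omega')$ with the asserted boundary data; only $\Phi|_{\widetilde{\Omega}}$ enters the right-hand side of \eqref{eq: integration by parts formula intermediate bdry}, so the extension into $\Omega$ is immaterial.

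Second, I would apply \eqref{eq: integration by parts formula for E} on $\widetilde{\Omega}$ to $F|_{\widetilde{\Omega}}\in E_p(\widetilde{\Omega})$ and $\Phi|_{\widetilde{\Omega}}\in W^{1,p'}(\widetilde{\Omega})$, obtaining
\[
\langle \Gamma_N^{\widetilde{\Omega}}(F|_{\widetilde{\Omega}}),\Phi|_{\partial\widetilde{\Omega}}\rangle=\langle \Phi,\Div F\rangle_{L^2(\widetilde{\Omega})}+\langle \nabla\Phi,F\rangle_{L^2(\widetilde{\Omega})}.
\]
Since $\partial\Omega$ and $\partial\Omega'$ are disjoint components of $\partial\widetilde{\Omega}$ and $\Phi$ vanishes on $\partial\Omega'$, the boundary pairing localizes to a pairing on $\partial\Omega$ with $\varphi$ alone.

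Third, I would establish that $\langle \Gamma_N^{\widetilde{\Omega}}(F|_{\widetilde{\Omega}}),\varphi\rangle=-\langle \Gamma_N^{\Omega}(F|_{\Omega}),\varphi\rangle$ holds for every $F\in E_p(\Omega')$ and $\varphi\in W^{1-1/p',p'}(\partial\Omega)$. For $F\in C^{\infty}(\overline{\Omega'};\R^n)$ this is immediate from \eqref{eq: normal trace smooth case}, since the outward unit normal to $\widetilde{\Omega}$ along $\partial\Omega$ is $-\nu$ when $\nu$ denotes the outward normal to $\Omega$. The general case follows from the density of smooth vector fields in $E_p(\Omega')$ and the continuity of the maps $F\mapsto \Gamma_N^\Omega(F|_\Omega)$ and $F\mapsto \Gamma_N^{\widetilde{\Omega}}(F|_{\widetilde{\Omega}})$; substituting this identity into the previous display yields \eqref{eq: integration by parts formula intermediate bdry}.

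The main obstacle I anticipate is this third step: carefully handling the sign flip arising from the opposite orientations of the outward normals of $\Omega$ and $\widetilde{\Omega}$ along $\partial\Omega$, and rigorously localizing the boundary pairing to $\partial\Omega$ for non-smooth $F$. Both are overcome by smooth approximation in $E_p(\Omega')$ combined with the continuity of the generalized normal trace operators supplied by Lemma~\ref{lemma: normal trace}.
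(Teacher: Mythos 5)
Your proposal is correct and follows essentially the same route as the paper: build $\Phi\in W^{1,p'}(\Omega')$ with trace $\varphi$ on $\partial\Omega$ and $0$ on $\partial\Omega'$, integrate by parts over the shell $\widetilde{\Omega}$ via Lemma~\ref{lemma: normal trace}, and handle the sign flip between the outward normals of $\Omega$ and $\widetilde{\Omega}$ on $\partial\Omega$ by density of smooth vector fields in $E_p(\Omega')$ together with continuity of the generalized normal traces. The only cosmetic difference is that the paper runs the whole computation on smooth approximants $F_k$ and passes to the limit at the end, whereas you isolate the identity $\Gamma_N^{\widetilde{\Omega}}=-\Gamma_N^{\Omega}$ on $\partial\Omega$ as a separate approximation step; the ingredients are the same.
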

	
	\begin{proof}
		As $F\in E_p(\Omega')$, we can choose $F_k\in C^{\infty}(\overline{\Omega'};\R^n)$ such that $F_k\to F$ in $E_p(\Omega')$ as $k\to\infty$. On the other hand, for any given $\varphi\in W^{1-1/p',p'}(\partial\Omega)$ we can select $\Psi \in W^{1,p'}(\Omega)$ such that $\Psi|_{\partial\Omega}=\varphi$. Next, we may extend $\Psi$ to a function $\Phi\in W^{1,p'}(\Omega')$ such that $\Phi|_{\partial\Omega'}=0$ (see~e.g.~\cite[Section~5.4]{EvansPDE}). Now, using Lemma~\ref{lemma: normal trace} we may compute 
		\[
		\begin{split}
			\left\langle \Gamma_N^{\Omega}F,\varphi \right\rangle&=\lim_{k\to\infty}\left\langle \Gamma_N^{\Omega}F_k,\varphi\right\rangle\\
			&=\lim_{k\to\infty}\int_{\partial\Omega}(F_k\cdot\nu)\varphi\,d\mathcal{H}^{n-1}\\
			&=-\lim_{k\to\infty}\int_{\partial\widetilde{\Omega}}(F_k\cdot n)\Phi|_{\partial\widetilde{\Omega}}\,d\mathcal{H}^{n-1}\\
			&=-\lim_{k\to\infty}\left\langle \Gamma_N^{\widetilde{\Omega}}F_k,\Phi|_{\partial\widetilde{\Omega}}\right\rangle\\
			&=-\lim_{k\to\infty}\left(\left\langle \Phi,\Div F_k\right\rangle_{L^2(\widetilde{\Omega})}+\left\langle\nabla \Phi,F_k\right\rangle_{L^2(\widetilde{\Omega})}\right)\\
			&=-\left(\langle \Phi,\Div F\rangle_{L^2(\widetilde{\Omega})}+\langle\nabla \Phi,F\rangle_{L^2(\widetilde{\Omega})}\right),
		\end{split}
		\]
		where $\nu$ denotes the outwards pointing vector field to $\partial\Omega$ and $n$ the outwards pointing normal vector field to $\partial\widetilde{\Omega}$.
	\end{proof}
	
	\begin{proof}[Proof of Theorem \ref{thm: unique det}]
		
		Again by \cite[Theorem 1.1]{GLX} we can assume without loss of generality that $\mathcal{N}$ is a proper subset of $\overline{\Omega}$. Let $\sigma_1,\sigma_2$ and $q_1,q_2$ be given as in the statement and let us assume that $q_2=0$ in $\Omega$. Fix some $f\in C_c^{\infty}(W)$ and denote by $u_f^{(j)}\in H^s(\R^n)$ the unique solution of 
		\begin{equation}
			\label{eq: nonlocal opt tom eq main thm 1}
			\begin{cases}
				( ( -\Div ( \sigma_j \nabla ) )^s +q_j)  u_{f}^{(j)} =0 &\text{ in }\Omega, \\
				u_{f}^{(j)} =f &\text{ in }\Omega_e,
			\end{cases}
		\end{equation}
		for $j=1,2$.
		Next we claim that for any $f\in C_c^{\infty}(W)$ the nonlocal Cauchy data 
		$(f|_{W},   ( - \Div( \sigma_j \nabla) ) ^s u_f^{(j)} |_{W})$
		determines $(U_f^{(j)}|_{\p \Omega}, \p_\nu U_f^{(j)}|_{\p \Omega})$ for $j=1,2$. Here, as usual, the functions $U_f^{(j)}$ are given by \eqref{eq: CS type extension main proof}, which solve \eqref{eq: local-nonlocal PDE main proof} with $f_\ell=f$,	for $j=1,2 $.

		By the proof of Theorem \ref{thm: uniqueness}, we already know that there holds
		\begin{equation}\label{U_1=U_2 1}
			\begin{split}
				u_f^{(1)}&=u_f^{(2)}, \quad  U_{f}^{(1)}=U_{f}^{(2)}\text{ in }\Omega_e\cup \mathcal{N}\text{ and } q_1=q_2=0\text{ in }\mathcal{N}\cap \Omega, 
			\end{split}
		\end{equation}
		for any $f\in C_c^{\infty}(W)$. Thus, the trace theorem gives
		\begin{equation}
			\label{eq: conclusion trace theorem}
			U_f^{(1)} |_{\partial\Omega}= U_f^{(2)} |_{\partial\Omega}\text{ in }H^{1/2}(\partial\Omega).
		\end{equation}
		Next, we want to show that also the normal derivatives coincide, that is
		\begin{equation}
			\label{eq: equ neumann der}
			\partial_\nu U_f^{(1)}|_{\partial\Omega}=\partial_\nu U_f^{(2)}|_{\partial\Omega}\text{ in }H^{-1/2}(\partial\Omega)
		\end{equation}
		for any  $f\in C_c^{\infty}(W)$. For this purpose let us introduce the vector field 
		\[
		F\vcentcolon = \sigma_1\nabla U_f^{(1)}-\sigma_2\nabla U_f^{(2)}\in L^2_{\mathrm{loc}}(\R^n).
		\]
		Moreover, let us choose a bounded smooth domain $\Omega'\subset\R^n$ such that $\Omega\Subset \Omega'$. Now, we show that $\Div F\in L^2(\Omega')$. First select some open set $V\subset\R^n$ satisfying $\Omega\setminus\mathcal{N}\Subset V\Subset \Omega$ and a cutoff function $\rho\in C_c^{\infty}(\Omega)$ such that $\rho=1$ in $V$. Now, for any $\varphi\in C_c^{\infty}(\Omega')$ we may calculate
		\begin{equation}\label{some comp}
			\begin{split}
				\langle \Div F,\varphi\rangle&= -\int_{\Omega'}( \sigma_1 \nabla U_f^{(1)}-\sigma_2\nabla U_f^{(2)}) \cdot\nabla \varphi\,dx\\
				&=\underbrace{-\int_{V}( \sigma_1 \nabla U_f^{(1)}-\sigma_2\nabla U_f^{(2)}) \cdot\nabla \varphi\,dx}_{\text{$\sigma_1=\sigma_2$ on $\mathcal{N}\cup \Omega_e$ and \eqref{U_1=U_2 1}}}\\
				&=\underbrace{-\int_{V}( \sigma_1 \nabla U_f^{(1)}-\sigma_2\nabla U_f^{(2)}) \cdot\nabla (\rho\varphi)\,dx}_{\rho =1 \text{ on }V}\\
				&=\underbrace{-\int_{\Omega}( \sigma_1 \nabla U_f^{(1)}-\sigma_2\nabla U_f^{(2)}) \cdot\nabla (\rho\varphi)\,dx}_{\text{$\sigma_1\nabla U_f^{(1)}=\sigma_2\nabla U_f^{(2)}$ on $\mathcal{N}$ (see \eqref{U_1=U_2 1})}}\\
				&=\underbrace{d_s\int_{\Omega}( q_1u_f^{(1)}-q_2u_f^{(2)}) \rho\varphi\,dx}_{\text{$\rho\varphi\in C_c^{\infty}(\Omega)$}}\\
				&=d_s\int_{\Omega'}( q_1u_f^{(1)}-q_2u_f^{(2)}) \rho\varphi\,dx.
			\end{split}
		\end{equation}
		Thus, 
		\begin{equation}
			\label{eq: divergence of F}
			\Div F=d_s( q_1u_f^{(1)}-q_2u_f^{(2)}) \rho\in L^2(\Omega')
		\end{equation}
		as we wanted to see. This in turn implies $F\in E_2(\Omega')$. By Lemma~\ref{auxialiary lemma}, we get
		\[
		\left\langle \Gamma_N^{\Omega}F|_{\Omega},\varphi \right\rangle=-\left(\langle \Phi,\Div F\rangle_{L^2(\widetilde{\Omega})}+\langle \nabla \Phi,F\rangle_{L^2(\widetilde{\Omega})}\right)
		\]
        for any $\varphi\in H^{1/2}(\partial\Omega)$ and $\Phi\in H^1(\Omega')$ with $\Phi|_{\partial\Omega}=\varphi$ and $\Phi|_{\partial\Omega'}=0$.
		By \eqref{U_1=U_2 1} and \eqref{eq: divergence of F}, we know that $F=\Div F=0$ a.e. in $\widetilde{\Omega}$ and hence
		\[
		\left\langle \Gamma_N^{\Omega}F|_{\Omega},\varphi \right\rangle=0
		\]
		for any $\varphi\in H^{1/2}(\partial\Omega)$, which yields the assertion \eqref{eq: equ neumann der}.
		
		Now, by the proof of Theorem \ref{thm: uniqueness of potential}, for any $g\in H^{1/2}(\partial\Omega)$ and corresponding solution $v_1\in H^1(\Omega)$ of
		\begin{equation}
			\begin{cases}
				\Div ( \sigma_1 \nabla v_1 )  =0 &\text{ in }\Omega, \\
				v_1 =g &\text{ on }\p \Omega,
			\end{cases}
		\end{equation}
		there exists $g_k\in C^\infty_c(W)$ such that  $U_{g_k}^{(1)}\to v_1$ in $H^1(\Omega)$ as $k\to\infty$ and $U_{g_k}^{(1)}|_{\partial\Omega}\to g$ in $H^{1/2}(\partial\Omega)$. Furthermore, we know that there holds
		\[
		\left\langle \Lambda_{\sigma_1} g,h \right\rangle=\lim_{k\to\infty}\langle \partial_\nu U_{g_k}^{(1)},h\rangle
		\]
		for any $h\in H^{1/2}(\partial\Omega)$. By  \eqref{eq: equ neumann der} this implies
		\[
		\left\langle \Lambda_{\sigma_1} g,h \right\rangle=\lim_{k\to\infty}\left\langle \partial_\nu U_{g_k}^{(1)},h\right\rangle=\lim_{k\to\infty}\left\langle \partial_\nu U_{g_k}^{(2)},h\right\rangle.
		\]
		Since $q_2=0$, we deduce from Lemma~\ref{Lem: C-S reduction} that $U_{g_k}^{(2)}\in H^1(\Omega)$ solves
		\[
		\begin{cases}
			\Div( \sigma_2 \nabla v)=0 &\text{ in }\Omega, \\
			v =\left. U_{g_k}^{(2)} \right|_{\partial\Omega} &\text{ on }\p \Omega.
		\end{cases}
		\]
		This ensures that there holds
		\[
		\left\langle \Lambda_{\sigma_1} g,h \right\rangle=\lim_{k\to\infty}\left\langle \Lambda_{\sigma_2}( U_{g_k}^{(2)}|_{\partial\Omega}),h\right\rangle.
		\]
		Finally, the continuity of the DN map $\Lambda_{\sigma_2}$ and $ U_{g_k}^{(2)} |_{\partial\Omega}=U_{g_k}^{(1)}|_{\partial\Omega}\to g$ in $H^{1/2}(\partial\Omega)$ as $k\to\infty$ gives
		\[
		\left\langle \Lambda_{\sigma_1} g,h \right\rangle=\left\langle \Lambda_{\sigma_2} g,h \right\rangle
		\]
		for all $g,h\in H^{1/2}(\partial\Omega)$. Hence, from \cite{sylvester1987global} we deduce that $\sigma_1=\sigma_2$ in $\Omega$ as we already know $\sigma_1=\sigma_2$ on $\partial\Omega\subset \mathcal{N}$. Finally, the condition \eqref{nonlocal DN map same} together with $\sigma_1=\sigma_2$ in $\R^n$ and \cite[Theorem 1.1]{GLX}, demonstrates that $q_1=q_2=0$ in $\Omega$ as desired. This proves the assertion.    
	\end{proof}
	
	\bigskip

	\noindent\textbf{Acknowledgments.} The authors would like to thank Mikko Salo and Ali Feizmohammadi for fruitful discussions and suggestions to improve this article.
	\begin{itemize}
		\item Y.-H. Lin is partially supported by the National Science and Technology Council (NSTC) Taiwan, under the project 113-2628-M-A49-003. Y.-H. Lin is also a Humboldt research fellow.  
		\item P.~Zimmermann was supported by the Swiss National Science Foundation (SNSF), under the grant number 214500.
	\end{itemize}
	
	\section*{Statements and Declarations}
	
	\subsection*{Data availability statement}
	No datasets were generated or analyzed during the current study.
	
	\subsection*{Conflict of Interests} Hereby we declare there are no conflict of interests.

	\bibliography{refs} 

\begin{thebibliography}{CMRU22}

\bibitem[AL98]{AL1998nonuniqueness}
Simon~R Arridge and William~RB Lionheart.
\newblock Nonuniqueness in diffusion-based optical tomography.
\newblock {\em Optics letters}, 23(11):882--884, 1998.

\bibitem[Arr99]{arridge1999optical}
Simon~R Arridge.
\newblock Optical tomography in medical imaging.
\newblock {\em Inverse problems}, 15(2):R41, 1999.

\bibitem[BGU21]{bhattacharyya2021inverse}
Sombuddha Bhattacharyya, Tuhin Ghosh, and Gunther Uhlmann.
\newblock Inverse problems for the fractional-{L}aplacian with lower order non-local perturbations.
\newblock {\em Trans. Amer. Math. Soc.}, 374(5):3053--3075, 2021.

\bibitem[Bre11]{Brezis}
Haim Brezis.
\newblock {\em Functional analysis, {S}obolev spaces and partial differential equations}.
\newblock Universitext. Springer, New York, 2011.

\bibitem[Cal06]{calderon}
Alberto~P. Calder\'{o}n.
\newblock On an inverse boundary value problem.
\newblock {\em Comput. Appl. Math.}, 25(2-3):133--138, 2006.

\bibitem[CGRU23]{CGRU2023reduction}
Giovanni Covi, Tuhin Ghosh, Angkana R{\"u}land, and Gunther Uhlmann.
\newblock A reduction of the fractional {C}alder\'on problem to the local {C}alder\'on problem by means of the {C}affarelli-{S}ilvestre extension.
\newblock {\em arXiv preprint arXiv:2305.04227}, 2023.

\bibitem[CJKS20]{CJKS20}
Thierry Coulhon, Renjin Jiang, Pekka Koskela, and Adam Sikora.
\newblock Gradient estimates for heat kernels and harmonic functions.
\newblock {\em J. Funct. Anal.}, 278(8):108398, 67, 2020.

\bibitem[CLL19]{CLL2017simultaneously}
Xinlin Cao, Yi-Hsuan Lin, and Hongyu Liu.
\newblock Simultaneously recovering potentials and embedded obstacles for anisotropic fractional {S}chr\"{o}dinger operators.
\newblock {\em Inverse Probl. Imaging}, 13(1):197--210, 2019.

\bibitem[CLR20]{cekic2020calderon}
Mihajlo Cekic, Yi-Hsuan Lin, and Angkana R{\"u}land.
\newblock The {C}alder{\'o}n problem for the fractional {S}chr{\"o}dinger equation with drift.
\newblock {\em Cal. Var. Partial Differential Equations}, 59(91), 2020.

\bibitem[CMR21]{CMR20}
Giovanni Covi, Keijo M\"{o}nkk\"{o}nen, and Jesse Railo.
\newblock Unique continuation property and {P}oincar\'{e} inequality for higher order fractional {L}aplacians with applications in inverse problems.
\newblock {\em Inverse Probl. Imaging}, 15(4):641--681, 2021.

\bibitem[CMRU22]{CMRU20}
Giovanni Covi, Keijo M\"{o}nkk\"{o}nen, Jesse Railo, and Gunther Uhlmann.
\newblock The higher order fractional {C}alder\'{o}n problem for linear local operators: {U}niqueness.
\newblock {\em Adv. Math.}, 399:Paper No. 108246, 2022.

\bibitem[CO23]{choulli2023fractional}
Mourad Choulli and El~Maati Ouhabaz.
\newblock Fractional anisotropic {C}alder\'on problem on complete {R}iemannian manifolds.
\newblock {\em arXiv preprint arXiv:2303.03764}, 2023.

\bibitem[CRTZ22]{CRTZ-2022}
Giovanni Covi, Jesse Railo, Teemu Tyni, and Philipp Zimmermann.
\newblock Stability estimates for the inverse fractional conductivity problem, 2022.

\bibitem[CRZ22]{CRZ2022global}
Giovanni Covi, Jesse Railo, and Philipp Zimmermann.
\newblock The global inverse fractional conductivity problem.
\newblock {\em arXiv preprint arXiv:2204.04325}, 2022.

\bibitem[Dav90]{HeatKernelsSpectralTheory}
E.~B. Davies.
\newblock {\em Heat kernels and spectral theory}, volume~92 of {\em Cambridge Tracts in Mathematics}.
\newblock Cambridge University Press, Cambridge, 1990.

\bibitem[DNPV12]{DNPV12}
Eleonora Di~Nezza, Giampiero Palatucci, and Enrico Valdinoci.
\newblock Hitchhiker's guide to the fractional {S}obolev spaces.
\newblock {\em Bull. Sci. Math.}, 136(5):521--573, 2012.

\bibitem[Eva10]{EvansPDE}
Lawrence~C. Evans.
\newblock {\em Partial differential equations}, volume~19 of {\em Graduate Studies in Mathematics}.
\newblock American Mathematical Society, Providence, RI, second edition, 2010.

\bibitem[Fei24]{feizmohammadi2021fractional_closed}
Ali Feizmohammadi.
\newblock Fractional {C}alder\'on' problem on a closed {R}iemannian manifold.
\newblock {\em Trans. Amer. Math. Soc., accepted for publication}, 2024.

\bibitem[FGKU24]{feizmohammadi2021fractional}
Ali Feizmohammadi, Tuhin Ghosh, Katya Krupchyk, and Gunther Uhlmann.
\newblock Fractional anisotropic {C}alder\'on problem on closed {R}iemannian manifolds.
\newblock {\em J. Diff. Geom., accepted for publication}, 2024.

\bibitem[FKS82]{fabes1982local}
Eugene~B Fabes, Carlos Kenig, and Raul~P Serapioni.
\newblock The local regularity of solutions of degenerate elliptic equations.
\newblock {\em Communications in Statistics-Theory and Methods}, 7(1):77--116, 1982.

\bibitem[FKU24]{FKU24}
Ali Feizmohammadi, Katya Krupchyk, and Gunther Uhlmann.
\newblock Calder\'{o}n problem for fractional {S}chr\"{o}dinger operators on closed {R}iemannian manifolds.
\newblock {\em arXiv preprint arXiv:2407.16866}, 2024.

\bibitem[GLX17]{GLX}
Tuhin Ghosh, Yi-Hsuan Lin, and Jingni Xiao.
\newblock The {C}alder\'{o}n problem for variable coefficients nonlocal elliptic operators.
\newblock {\em Comm. Partial Differential Equations}, 42(12):1923--1961, 2017.

\bibitem[Gri09]{HeatKernelAnalysisManifolds}
Alexander Grigor'yan.
\newblock {\em Heat kernel and analysis on manifolds}, volume~47 of {\em AMS/IP Studies in Advanced Mathematics}.
\newblock American Mathematical Society, Providence, RI; International Press, Boston, MA, 2009.

\bibitem[GRSU20]{GRSU18}
Tuhin Ghosh, Angkana R\"{u}land, Mikko Salo, and Gunther Uhlmann.
\newblock Uniqueness and reconstruction for the fractional {C}alder\'{o}n problem with a single measurement.
\newblock {\em J. Funct. Anal.}, 279(1):108505, 42, 2020.

\bibitem[GSU20]{GSU20}
Tuhin Ghosh, Mikko Salo, and Gunther Uhlmann.
\newblock The {C}alder\'{o}n problem for the fractional {S}chr\"{o}dinger equation.
\newblock {\em Anal. PDE}, 13(2):455--475, 2020.

\bibitem[GT83]{gilbarg2015elliptic}
David Gilbarg and Neil~S Trudinger.
\newblock {\em Elliptic partial differential equations of second order}.
\newblock Second edition, Springer, 1983.

\bibitem[GU21]{GU2021calder}
Tuhin Ghosh and Gunther Uhlmann.
\newblock The {C}alder\'{o}n problem for nonlocal operators.
\newblock {\em arXiv:2110.09265}, 2021.

\bibitem[Har09]{harrach2009uniqueness}
Bastian Harrach.
\newblock On uniqueness in diffuse optical tomography.
\newblock {\em Inverse problems}, 25(5):055010, 2009.

\bibitem[HL12]{harrach2012simultaneous}
Bastian Harrach and Matti Lassas.
\newblock Simultaneous determination of the diffusion and absorption coefficient from boundary data.
\newblock {\em Inverse Problems \& Imaging}, 6(4), 2012.

\bibitem[HL19]{harrach2017nonlocal-monotonicity}
Bastian Harrach and Yi-Hsuan Lin.
\newblock Monotonicity-based inversion of the fractional {S}chr\"{o}dinger equation {I}. {P}ositive potentials.
\newblock {\em SIAM J. Math. Anal.}, 51(4):3092--3111, 2019.

\bibitem[HL20]{harrach2020monotonicity}
Bastian Harrach and Yi-Hsuan Lin.
\newblock Monotonicity-based inversion of the fractional {S}ch\"{o}dinger equation {II}. {G}eneral potentials and stability.
\newblock {\em SIAM J. Math. Anal.}, 52(1):402--436, 2020.

\bibitem[KLW22]{KLW2021calder}
Pu-Zhao Kow, Yi-Hsuan Lin, and Jenn-Nan Wang.
\newblock The {C}alder\'{o}n problem for the fractional wave equation: uniqueness and optimal stability.
\newblock {\em SIAM J. Math. Anal.}, 54(3):3379--3419, 2022.

\bibitem[KLZ24]{Frac-p-Lap-KLZ}
Manas Kar, Yi-Hsuan Lin, and Philipp Zimmermann.
\newblock Determining coefficients for a fractional {$p$}-{L}aplace equation from exterior measurements.
\newblock {\em J. Differential Equations}, 406:338--365, 2024.

\bibitem[KRZ23]{KRZ-2023}
Manas Kar, Jesse Railo, and Philipp Zimmermann.
\newblock The fractional {$p\,$}-biharmonic systems: optimal {P}oincar\'{e} constants, unique continuation and inverse problems.
\newblock {\em Calc. Var. Partial Differential Equations}, 62(4):Paper No. 130, 36, 2023.

\bibitem[Lin22]{lin2020monotonicity}
Yi-Hsuan Lin.
\newblock Monotonicity-based inversion of fractional semilinear elliptic equations with power type nonlinearities.
\newblock {\em Calc. Var. Partial Differential Equations}, 61(5):Paper No. 188, 30, 2022.

\bibitem[Lin23]{lin2023determining}
Yi-Hsuan Lin.
\newblock Determining both leading coefficient and source in a nonlocal elliptic equation.
\newblock {\em arXiv preprint arXiv:2312.15607}, 2023.

\bibitem[LL22]{LL2020inverse}
Ru-Yu Lai and Yi-Hsuan Lin.
\newblock Inverse problems for fractional semilinear elliptic equations.
\newblock {\em Nonlinear Anal.}, 216:Paper No. 112699, 21, 2022.

\bibitem[LL23]{LL2022inverse}
Yi-Hsuan Lin and Hongyu Liu.
\newblock Inverse problems for fractional equations with a minimal number of measurements.
\newblock {\em Communications and Computational Analysis}, 1:72--93, 2023.

\bibitem[LLR20]{LLR2019calder}
Ru-Yu Lai, Yi-Hsuan Lin, and Angkana R\"{u}land.
\newblock The {C}alder\'{o}n problem for a space-time fractional parabolic equation.
\newblock {\em SIAM J. Math. Anal.}, 52(3):2655--2688, 2020.

\bibitem[LLU22]{LLU2022para}
Ching-Lung Lin, Yi-Hsuan Lin, and Gunther Uhlmann.
\newblock The {C}alder\'{o}n problem for nonlocal parabolic operators.
\newblock {\em arXiv preprint arXiv:2209.11157}, 2022.

\bibitem[LLU23]{LLU2023calder}
Ching-Lung Lin, Yi-Hsuan Lin, and Gunther Uhlmann.
\newblock The {C}alder\'{o}n problem for nonlocal parabolic operators: {A} new reduction from the nonlocal to the local.
\newblock {\em arXiv preprint arXiv:2308.09654}, 2023.

\bibitem[LRZ22]{LRZ2022calder}
Yi-Hsuan Lin, Jesse Railo, and Philipp Zimmermann.
\newblock The {C}alder\'{o}n problem for a nonlocal diffusion equation with time-dependent coefficients.
\newblock {\em arXiv preprint arXiv:2211.07781}, 2022.

\bibitem[LZ23]{LZ2023unique}
Yi-Hsuan Lin and Philipp Zimmermann.
\newblock Unique determination of coefficients and kernel in nonlocal porous medium equations with absorption term.
\newblock {\em arXiv preprint arXiv:2305.16282}, 2023.

\bibitem[RS18]{ruland2018exponential}
Angkana R\"{u}land and Mikko Salo.
\newblock Exponential instability in the fractional {C}alder\'{o}n problem.
\newblock {\em Inverse Problems}, 34(4):045003, 21, 2018.

\bibitem[RS20]{RS17}
Angkana R\"{u}land and Mikko Salo.
\newblock The fractional {C}alder\'{o}n problem: low regularity and stability.
\newblock {\em Nonlinear Anal.}, 193:111529, 56, 2020.

\bibitem[R{\"u}l15]{ruland2015unique}
Angkana R{\"u}land.
\newblock Unique continuation for fractional {S}chr{\"o}dinger equations with rough potentials.
\newblock {\em Communications in Partial Differential Equations}, 40(1):77--114, 2015.

\bibitem[R{\"u}l23]{ruland2023revisiting}
Angkana R{\"u}land.
\newblock Revisiting the anisotropic fractional {C}alder\'on problem using the {C}affarelli-{S}ilvestre extension.
\newblock {\em arXiv preprint arXiv:2309.00858}, 2023.

\bibitem[RZ22]{RZ2022FracCondCounter}
Jesse Railo and Philipp Zimmermann.
\newblock Counterexamples to uniqueness in the inverse fractional conductivity problem with partial data.
\newblock {\em Inverse Problems and Imaging}, 2022.

\bibitem[RZ23]{RZ2022unboundedFracCald}
Jesse Railo and Philipp Zimmermann.
\newblock Fractional {C}alder\'{o}n problems and {P}oincar\'{e} inequalities on unbounded domains.
\newblock {\em J. Spectr. Theory}, 13(1):63--131, 2023.

\bibitem[RZ24]{RZ-low-2022}
Jesse Railo and Philipp Zimmermann.
\newblock Low regularity theory for the inverse fractional conductivity problem.
\newblock {\em Nonlinear Analysis}, 239:113418, 2024.

\bibitem[Soh12]{sohr2012navier}
Hermann Sohr.
\newblock {\em The Navier-Stokes equations: An elementary functional analytic approach}.
\newblock Springer Science \& Business Media, 2012.

\bibitem[ST10]{ST10}
Pablo~Ra\'{u}l Stinga and Jos\'{e}~Luis Torrea.
\newblock Extension problem and {H}arnack's inequality for some fractional operators.
\newblock {\em Comm. Partial Differential Equations}, 35(11):2092--2122, 2010.

\bibitem[Str08]{Variational-Methods}
Michael Struwe.
\newblock {\em Variational methods}, volume~34 of {\em Ergebnisse der Mathematik und ihrer Grenzgebiete. 3. Folge. A Series of Modern Surveys in Mathematics [Results in Mathematics and Related Areas. 3rd Series. A Series of Modern Surveys in Mathematics]}.
\newblock Springer-Verlag, Berlin, fourth edition, 2008.
\newblock Applications to nonlinear partial differential equations and Hamiltonian systems.

\bibitem[SU87]{sylvester1987global}
John Sylvester and Gunther Uhlmann.
\newblock A global uniqueness theorem for an inverse boundary value problem.
\newblock {\em Annals of Mathematics}, 125(1):153--169, 1987.

\bibitem[SZ12]{Salo:Zhong:2012}
Mikko Salo and Xiao Zhong.
\newblock An inverse problem for the $p$-{L}aplacian: {B}oundary determination.
\newblock {\em {SIAM} J. Math. Anal.}, 44(4):2474--2495, March 2012.

\bibitem[Zim23]{zimmermann2023inverse}
Philipp Zimmermann.
\newblock Inverse problem for a nonlocal diffuse optical tomography equation.
\newblock {\em Inverse Problems}, 39(9):Paper No. 094001, 25, 2023.

\end{thebibliography}
	
	\bibliographystyle{alpha}
	
\end{document}